\title[C-Z operators related to Jacobi expansions]
    {Calder\'on-Zygmund operators related to Jacobi expansions}
\author[A{.} Nowak]{Adam Nowak}
\author[P{.} Sj\"ogren]{Peter Sj\"ogren}
\address{Adam Nowak, \newline
			Institute of Mathematics,
      Polish Academy of Sciences, \newline
      \'Sniadeckich 8,
      00--956 Warszawa, Poland \newline
			\indent and \newline
			Institute of Mathematics and Computer Science, \newline
      Wroc\l{}aw University of Technology,       \newline
      Wyb{.} Wyspia\'nskiego 27,
      50--370 Wroc\l{}aw, Poland      
      }
\email{anowak@pwr.wroc.pl}
\address{
\noindent Peter Sj\"ogren \newline
    Mathematical Sciences, 
    University of Gothenburg \newline
    Mathematical Sciences,
    Chalmers University of Technology \newline 
    SE-412 96 G\"oteborg
    Sweden}
\email{\noindent peters@chalmers.se}
\theoremstyle{plain}
\newtheorem{thm}{Theorem}[section]
\newtheorem{lem}[thm]{Lemma}
\newtheorem{prop}[thm]{Proposition}
\newtheorem{cor}[thm]{Corollary}
\theoremstyle{definition}
\theoremstyle{remark}
\newtheorem*{rem*}{Remark}
\newtheorem{rem}[thm]{Remark}
\theoremstyle{plain}
\DeclareMathOperator{\domain}{Dom}
\DeclareMathOperator{\support}{supp}
\DeclareMathOperator*{\essup}{ess\,sup}
\def\R{\mathbb R}
\def\d{d}
\def\P{\mathcal P}
\def\H{\mathcal H}
\def\m{\mu} 						
\def\ab{\alpha,\beta}
\def\J{\mathcal J} 			
\def\q{\mathfrak q}
\begin{document}

\begin{abstract}
We study several fundamental operators in harmonic analysis related to Jacobi expansions,
including Riesz transforms, imaginary powers of the Jacobi operator, the Jacobi-Poisson semigroup maximal
operator and Littlewood-Paley-Stein square functions. We show that these are (vector-valued)
Calder\'on-Zygmund operators in the sense of the associated space of homogeneous type,
and hence their mapping properties follow from the general theory.
Our proofs rely on an explicit formula for the Jacobi-Poisson kernel, which we derive from
a product formula for Jacobi polynomials.
\end{abstract}

\maketitle

\footnotetext{
\emph{\noindent 2010 Mathematics Subject Classification:} primary 42C05; secondary 42C10\\
\emph{Key words and phrases:} Jacobi polynomial, Jacobi expansion, Jacobi operator,
			Jacobi-Poisson semigroup, Riesz transform, imaginary power, maximal operator, square function,
			Calder\'on-Zygmund operator.
		
		The first-named author was supported in part by MNiSW Grant N N201 417839.
}

\section{Introduction} \label{sec:intro}

The fundamental paper \cite{MuS} of B{.} Muckenhoupt and E{.}M{.} Stein initiated in 1965 an important
development in harmonic analysis known as \emph{harmonic analysis of orthogonal expansions}. The principal 
part of \cite{MuS} is devoted to classical ultraspherical expansions. Recently this setting was
reinvestigated by means of more modern techniques in \cite{Bur1,Bur2}.
In the present paper we treat the general Jacobi setting, which is a natural
generalization of the ultraspherical one. In fact, the suggestion of further research
in this direction appears explicitly in \cite[p.\,22]{MuS}.
The point of view in \cite{MuS} is shaped by the classical Fourier analysis
in the torus and has deep roots in the interplay between Fourier series, analytic functions
and harmonic functions. Here as in \cite{Bur1,Bur2}, we adopt the spectral point of view,
which seems more natural and appropriate from a time perspective and was systematically applied
later in the seminal monograph of Stein \cite{topics}. This manifests itself in slight differences
between objects arising naturally according to these two points of view. 
Some aspects of harmonic analysis related to the Jacobi setting in the spirit of \cite{MuS} were
studied earlier by Li \cite{Li}, and recently by Stempak \cite{Stem}. 
However, our approach, governed by the general Calder\'on-Zygmund theory,
is different and in fact much wider, and it seems more modern.

Given parameters $\alpha,\beta>-1$, we consider the Jacobi differential operator
$$
\J^{\ab} = - \frac{d^2}{d\theta^2} - \frac{\alpha-\beta+(\alpha+\beta+1)\cos\theta}{\sin \theta}
	\frac{d}{d\theta} + \Big( \frac{\alpha+\beta+1}{2}\Big)^2
$$
on the interval $(0,\pi)$ equipped with the (doubling) measure
$$
d\m_{\ab}(\theta) = \Big( \sin\frac{\theta}2 \Big)^{2\alpha+1} 
	\Big( \cos\frac{\theta}2\Big)^{2\beta+1} d\theta.
$$
This operator is formally symmetric and positive in $L^2(d\m_{\ab})$,
and its spectral decomposition is discrete and is given by the classical Jacobi polynomials,
see Section \ref{sec:prel} for details. Moreover, $\J^{\ab}$ admits the decomposition
$$
\J^{\ab} = \delta^*\delta + \Big( \frac{\alpha+\beta+1}{2}\Big)^2,
$$
where $\delta = {d}\slash{d\theta}$ and $\delta^*$ is the formal adjoint of $\delta$ in $L^2(d\m_{\ab})$.
For the special choice of $\alpha=\beta=\lambda-1\slash 2$, the situation reduces to
the ultraspherical setting of type $\lambda$ considered in \cite{MuS,Bur1,Bur2}.

The central objects of our study are the following linear or sublinear operators related to $\J^{\ab}$
(for strict definitions see Section \ref{sec:prel}).
\begin{itemize}
\item[(i)] Imaginary powers of the Jacobi operator
$$
I_{\gamma}^{\ab}\colon f \mapsto \big(\J^{\ab}\big)^{-i\gamma}f, \qquad \gamma \in \R,\quad \gamma\neq 0.
$$
\item[(ii)] Riesz-Jacobi transforms of arbitrary order $N$
$$
R_N^{\ab}\colon f \mapsto \delta^{N} \big(\J^{\ab}\big)^{-N\slash 2}f, \qquad N=1,2,\ldots.
$$
\item[(iii)] The Jacobi-Poisson semigroup maximal operator
$$
\H_{*}^{\ab}\colon f\mapsto \big\| \exp\big(-t\sqrt{\J^{\ab}}\big)f\big\|_{L^{\infty}(dt)}.
$$
\item[(iv)] The vertical and horizontal square functions based on the Jacobi-Poisson semigroup
\begin{align*}
g_{V}^{\ab}\colon & f\mapsto \big\| \partial_t \exp\big(-t\sqrt{\J^{\ab}}\big)f \big\|_{L^2(tdt)},\\
g_{H}^{\ab}\colon & f\mapsto \big\| \delta \exp\big(-t\sqrt{\J^{\ab}}\big)f \big\|_{L^2(tdt)}.
\end{align*}
\item[(v)] Mixed square functions of arbitrary orders $M,N$ based on the Jacobi-Poisson semigroup
$$
g_{M,N}^{\ab}\colon f \mapsto\big\| \partial_t^{M} \delta^{N} 
\exp\big(-t\sqrt{\J^{\ab}}\big)f \big\|_{L^2(t^{2M+2N-1}dt)},
$$
where $M,N=0,1,2,\ldots$ and $M+N>1$. 
\end{itemize}
Our main result, Theorem \ref{thm:main} below, says that under the slight restriction 
$\alpha,\beta\ge -1\slash 2$ these operators are scalar-valued or can be viewed as vector-valued
Calder\'on-Zygmund  operators in the sense of the space of homogeneous type
$((0,\pi),d\m_{\ab},|\cdot|)$, where $|\cdot|$ stands for the ordinary distance. 
Consequences of this, including mapping properties in weighted $L^p$ spaces, are then delivered
by the general theory.
The present results constitute a natural extension of those mentioned above in the ultraspherical
setting \cite{MuS,Bur1,Bur2} and complement those on Riesz transforms and conjugacy in the Jacobi
setting \cite{Li,Stem}. Further comments can be found at the end of Section \ref{sec:prel}.

The main difficulty related to the Calder\'on-Zygmund approach is to obtain suitable kernel
estimates. Inspired by earlier ideas used in certain Laguerre settings \cite{Sa,NS},
we present a transparent technique based on a convenient symmetric double-integral
representation of the Jacobi-Poisson kernel emerging from the product formula 
for Jacobi polynomials due to Dijksma and Koornwinder \cite{DK}.
This method is of independent interest and is in fact applicable to a larger class of operators than
(i)-(v), including multipliers of Laplace transform type in the sense of Stein 
(see \cite[p.\,58, p.\,121]{topics}) and Lusin's square functions.
The well-known closed formula for the Jacobi-Poisson kernel in terms of Appel's hypergeometric
function, see Section~\ref{sec:prel}, does not seem to be useful in this context.
According to our knowledge, so far no reasonable representation is available for either the Jacobi
heat kernel or for the multi-dimensional Jacobi-Poisson kernel.
This is the main reason for limiting our investigations to objects expressible via the
one-dimensional Jacobi-Poisson semigroup.

The paper is organized as follows. Section \ref{sec:prel} contains the setup, strict definitions
of the operators (i)-(v), statements of the main results and accompanying comments and remarks.
In Section \ref{sec:L2}, the operators (i)-(v) are proved to be $L^2$-bounded and associated, in the
Calder\'on-Zygmund theory sense, with the relevant kernels. Finally, Section \ref{sec:ker}
is devoted to the proofs of all the necessary kernel estimates. This is the largest and most technical
part of the work.

Throughout the paper we use a standard notation with essentially all symbols referring to the
space of homogeneous type $((0,\pi),d\m_{\ab},|\cdot|)$. Since the distance in this space is the
Euclidean one, the ball denoted $B(r,\theta)$ is simply the interval $(\theta-r,\theta+r)\cap (0,\pi)$.
By $\langle f,g\rangle_{d\m_{\ab}}$ we mean
$\int_{(0,\pi)} f(\theta)\overline{g(\theta)}\, d\m_{\ab}(\theta)$ whenever the integral makes sense.
Further, $L^p(wd\m_{\ab})$ stands for the weighted $L^p$ space, $w$ being a nonnegative
weight on $(0,\pi)$. Given $1 \le p < \infty$, $p'$ is its adjoint exponent, $1\slash p + 1\slash p' =1$.
For $1\le p < \infty$, we denote by $A^{\ab}_p = A^{\ab}_p((0,\pi),d\m_{\ab})$ the Muckenhoupt class 
of $A_p$ weights related to the measure $\m_{\ab}$.  
More precisely, $A^{\ab}_p$ is the class of all nonnegative functions $w$ such that
\begin{equation*}
\sup_{I \in \mathcal{I}} \bigg[ \frac{1}{\m_{\ab}(I)} \int_I w(\theta) \,
     d\m_{\ab}(\theta) \bigg]  \bigg[ \frac{1}{\m_{\ab}(I)} \int_I w(\theta)^{-p'\slash p}
     \, d\m_{\ab}(\theta) \bigg]^{p\slash p'} < \infty
\end{equation*}
when $1<p<\infty$, or
\begin{equation*}
\sup_{I \in \mathcal{I}} \frac{1}{\m_{\ab}(I)} \int_I w(\theta) \,
     d\m_{\ab}(\theta) \; \essup_{\theta \in I} \frac{1}{w(\theta)} < \infty
\end{equation*}
if $p=1$; here $\mathcal{I}$ is the class of all subintervals of $(0,\pi)$. 
Clearly, this implies that $w \in L^1(d\m_{\ab})$. It is easy to check that a double-power weight 
$w(\theta) = (\sin\frac{\theta}2)^r (\cos\frac{\theta}2)^s$ belongs to $A^{\ab}_p$, $1<p<\infty$, 
if and only if $-(2\alpha+2)<r<(2\alpha+2)(p-1)$ and $-(2\beta+2)<s<(2\beta+2)(p-1)$,
and $w \in A^{\ab}_1$ if and only if $-(2\alpha+2)< r \le 0$ and $-(2\beta+2)< s \le 0$.

While writing estimates, we will frequently use the notation $X \lesssim Y$
to indicate that $X \le CY$ with a positive constant $C$ independent of significant quantities.
We shall write $X \simeq Y$ when simultaneously $X \lesssim Y$ and $Y \lesssim X$.

\section{Preliminaries and statement of main results} \label{sec:prel}

Given $\alpha, \beta > -1$, the standard Jacobi polynomials
of type $\ab$ are defined on the interval $(-1,1)$ by the Rodrigues formula
$$
P_n^{\ab} (x) = \frac{(-1)^n}{2^n n!} (1-x)^{-\alpha}(1+x)^{-\beta}
    \frac{{\d}^n}{{\d}x^n} \big( (1-x)^{\alpha+n}(1+x)^{\beta+n} \big),
    \qquad n=0,1,2,\ldots.
$$
Note that each $P_n^{\ab}$ is a polynomial of degree $n$.
It is natural and convenient to apply the trigonometric parametrization 
$x=\cos\theta$, $\theta\in (0,\pi)$, and consider the normalized trigonometric polynomials
$$
\P_n^{\ab}(\theta) = c_n^{\ab} P_n^{\ab}(\cos\theta),
$$
with the normalizing factor
$$
c_n^{\ab}=\|P_n^{\ab}(\cos\theta)\|^{-1}_{L^2(d\m_{\ab}(\theta))} =
	\bigg( \frac{(2n+\alpha+\beta+1)\Gamma(n+\alpha+\beta+1)\Gamma(n+1)}
		{\Gamma(n+\alpha+1)\Gamma(n+\beta+1)} \bigg)^{1\slash 2},
$$
where for $n=0$ and $\alpha+\beta=-1$
the product $(2n+\alpha+\beta+1)\Gamma(n+\alpha+\beta+1)$ must be replaced by
$\Gamma(\alpha+\beta+2)$. It is well known that the system $\{\P_n^{\ab}:n\ge 0\}$
is orthonormal and complete in $L^2((0,\pi),d\m_{\ab})$. 
Moreover, each $\P_n^{\ab}$ is an eigenfunction of the Jacobi operator, 
$$
\J^{\ab} \P_n^{\ab} = \lambda_n^{\ab} \P_n^{\ab}, 
	\qquad \lambda_n^{\ab}=\Big( n+ \frac{\alpha+\beta+1}{2}\Big)^2.
$$
Thus $\J^{\ab}$, considered initially on $C_c^{\infty}(0,\pi)$, has a natural self-adjoint
extension in $L^2(d\m_{\ab})$, still denoted by the same symbol $\J^{\ab}$ and given by
\begin{equation} \label{sres}
\J^{\ab} f = \sum_{n=0}^{\infty} \lambda_n^{\ab} \langle f, \P_n^{\ab} \rangle_{d\m_{\ab}} \P_n^{\ab}
\end{equation}
on the domain $\domain\J^{\ab}$ consisting of all functions $f\in L^2(d\m_{\ab})$ 
for which the defining series converges in $L^2(d\m_{\ab})$. 
Then the spectral decomposition of $\J^{\ab}$ is given by \eqref{sres}.
To see that this produces an extension from $C_c^{\infty}(0,\pi)$, observe that
$\lambda_n^{\ab}\langle f, \P_n^{\ab}\rangle_{d\m_{\ab}}
= \langle \J^{\ab}f, \P_n^{\ab}\rangle_{d\m_{\ab}}$ for any $f\in C_c^{\infty}(0,\pi)$.

The semigroup generated by the square root of $\J^{\ab}$ is called the Jacobi-Poisson semigroup
and will be denoted by $\{\H_t^{\ab}\}$. We have for $f \in L^2(d\m_{\ab})$ and $t>0$
\begin{equation} \label{Hser}
\H_t^{\ab}f = \exp\Big(-t\sqrt{\J^{\ab}}\Big)f = \sum_{n=0}^{\infty} e^{-t|n+\frac{\alpha+\beta+1}{2}|}
	\langle f, \P_n^{\ab} \rangle_{d\m_{\ab}} \P_n^{\ab},
\end{equation}
the convergence being in $L^2(d\m_{\ab})$. 
In fact, the last series converges pointwise for any $f \in L^p(wd\m_{\ab})$, $1\le p < \infty$,
$w \in A_p^{\ab}$, and defines a smooth function of $(t,\theta) \in (0,\infty)\times (0,\pi)$.
To give a brief justification of this fact, we note that the normalized Jacobi polynomials
satisfy the estimate (see \cite[(7.32.2)]{Sz})
\begin{equation} \label{estjac}
|\P_n^{\ab}(\theta)| \lesssim (n+1)^{\alpha+\beta+5\slash 2}, \qquad \theta \in (0,\pi), \quad n \ge 0.
\end{equation}
Using H\"older's inequality, one proves that the Fourier-Jacobi coefficients of any 
$f\in L^p(wd\m_{\ab})$, $w\in A_p^{\ab}$, $1\le p < \infty$, grow at most polynomially, in the sense that
\begin{equation} \label{growthFJ}
\big|\langle f, \P_n^{\ab}\rangle_{d\m_{\ab}}\big| \lesssim \|f\|_{L^p(wd\m_{\ab})} 
	(n+1)^{\alpha + \beta + 5\slash 2}, \qquad n \ge 0.
\end{equation}
Therefore, the series in \eqref{Hser} converges absolutely and uniformly because of
the exponentially decreasing factor. Moreover, term by term differentiation of this series together
with the differentiation rule (cf. \cite[(4.21.7)]{Sz})
\begin{equation} \label{jacdiff}
\frac{d}{d\theta} \P_n^{\ab}(\theta) = -\frac{1}{2}\sqrt{n(n+\alpha+\beta+1)}\, \sin\theta \;
	\P_{n-1}^{\alpha+1,\beta+1}(\theta), \qquad n \ge 0,
\end{equation}
shows that it defines a smooth function of $(t,\theta)\in (0,\infty)\times (0,\pi)$.
In \eqref{jacdiff}, and elsewhere, we use the convention that $\P^{\ab}_{k}\equiv 0$ if $k<0$.
Thus the series \eqref{Hser} can be regarded as a definition of $\H_t^{\ab}$ on the weighted spaces
$L^p(wd\m_{\ab})$, $w\in A_p^{\ab}$, $1\le p < \infty$.

The integral representation of $\{\H_t^{\ab}\}$, valid on the weighted $L^p$ spaces mentioned above
(see \cite{StTo} or \cite{No} for the relevant arguments), is
$$
\H_t^{\ab}f(\theta) = \int_0^{\pi} H_t^{\ab}(\theta,\varphi)f(\varphi)\, d\m_{\ab}(\varphi),
	\qquad \theta \in (0,\pi), \quad t>0,
$$
with the Jacobi-Poisson kernel
\begin{equation} \label{PJser}
H_t^{\ab}(\theta,\varphi) = \sum_{n=0}^{\infty} e^{-t|n+\frac{\alpha+\beta+1}{2}|}
	\P_n^{\ab}(\theta)\P_n^{\ab}(\varphi).
\end{equation}
The last series converges absolutely for all $\theta,\varphi \in (0,\pi)$ and $t>0$,
defining a smooth function of $(t,\theta,\varphi)\in (0,\infty)\times(0,\pi)^2$;
this follows from \eqref{estjac}, \eqref{jacdiff} and term by term differentiation. 
On the other hand, the series in \eqref{PJser} is highly oscillating. 
Since the behavior of the kernel is essentially hidden behind the oscillations,
to analyze objects involving $H_t^{\ab}(\theta,\varphi)$ we will need a more convenient representation.
It is well known that $H_t^{\ab}(\theta,\varphi)$ can be expressed by means of Appel's hypergeometric
function of two variables $F_4$. For $\alpha,\beta>-1$ such that $\alpha+\beta\ge -1$
\begin{align*}
H_t^{\ab}(\theta,\varphi) & = \frac{1}{2^{+\alpha+\beta+1}\m_{\ab}((0,\pi))} 
	\frac{\sinh\frac{t}{2}}{(\cosh\frac{t}{2})^{\alpha+\beta+2}} \\ & \quad \times
	  F_4\Bigg( \frac{\alpha+\beta+2}{2},\frac{\alpha+\beta+3}{2}; \alpha+1, \beta+1; 
	\bigg(\frac{\sin\frac{\theta}2\sin\frac{\varphi}2}{\cosh\frac{t}2}\bigg)^2,
	\bigg(\frac{\cos\frac{\theta}2\cos\frac{\varphi}2}{\cosh\frac{t}2}\bigg)^2\Bigg).
\end{align*}
This formula is due to Watson; it can also be obtained from a result of Bailey,
see \cite[p.\,385--387]{AAR}. From this expression, positivity and continuity with respect to 
the parameters $\alpha,\beta$ of the Jacobi-Poisson kernel can easily be seen.
However, for our purposes we need a more suitable representation, 
which will be derived in Section \ref{sec:ker}.

We now give precise definitions on $L^2(d\m_{\ab})$ of our main objects of interest.
For $f\in L^2(d\m_{\ab})$ we define
\begin{itemize}
\item[(i)] imaginary powers of the Jacobi operator
$$
I^{\ab}_{\gamma}f = \sum_{n=0}^{\infty} \Big| n + \frac{\alpha+\beta+1}{2}\Big|^{-2\gamma i}
	\langle f, \P_n^{\ab} \rangle_{d\m_{\ab}} \P_n^{\ab}, 
$$
where $\alpha+\beta \neq -1$, $\gamma \in \R$, $\gamma \neq 0$;
\item[(ii)] Riesz-Jacobi transforms of order $N$
$$
R_N^{\ab}f = \sum_{n=0}^{\infty} \Big| n + \frac{\alpha+\beta+1}{2}\Big|^{-N}
	\langle f, \P_n^{\ab} \rangle_{d\m_{\ab}} \delta^N \P_n^{\ab}, 
$$
where $\alpha+\beta \neq -1$ and $N=1,2,\ldots$;
\item[(iii)] the Jacobi-Poisson semigroup maximal operator
$$
\H^{\ab}_* f(\theta) = \big\|\H_t^{\ab}f(\theta)\big\|_{L^{\infty}(dt)}, \qquad \theta \in (0,\pi);
$$
\item[(iv)] the vertical and horizontal square functions based on the Jacobi-Poisson semigroup
\begin{align*}
g_V^{\ab}(f)(\theta) & = \big\|\partial_t \H_t^{\ab}f(\theta)\big\|_{L^2(tdt)}, \qquad \theta \in (0,\pi),\\
g_H^{\ab}(f)(\theta) & = \big\|\delta \H_t^{\ab}f(\theta)\big\|_{L^2(tdt)}, \qquad \theta \in (0,\pi);
\end{align*}
\item[(v)] mixed square functions of arbitrary orders based on the Jacobi-Poisson semigroup
$$
g_{M,N}^{\ab}(f)(\theta) = \big\|\partial_t^M \delta^N \H_t^{\ab}f(\theta)\big\|_{L^2(t^{2M+2N-1}dt)},
$$
where $M,N=0,1,2,\ldots$ and $M+N>0$.
\end{itemize}
Notice that (v) includes (iv) because $g_{V}^{\ab} = g_{1,0}^{\ab}$ and $g_H^{\ab}=g_{0,1}^{\ab}$.
Here and in the statements of the results we distinguish $g_V^{\ab}$ and $g_H^{\ab}$ since these
are the most common $g$-functions.
As will be explained in Section \ref{sec:L2}, $I^{\ab}_{\gamma}$ and $R^{\ab}_N$ are indeed
well defined on $L^2(d\m_{\ab})$ by the above formulas, since the series converge in $L^2(d\m_{\ab})$ and
the operators are bounded on $L^2(d\m_{\ab})$.
As for the remaining operators, their definitions are understood pointwise and 
are valid for general $f \in L^p(wd\m_{\ab})$, $w \in A_p^{\ab}$, $1\le p < \infty$,
since $\H_t^{\ab}f(\theta)$ is a smooth function of $(t,\theta)\in (0,\infty)\times (0,\pi)$.

We remark that in the so-called critical case when $\alpha+\beta=-1$ (and in particular in the 
fundamental case $\alpha=\beta=-1\slash 2$), $I_{\gamma}^{\ab}$ and $R_N^{\ab}$ cannot be defined by 
the above spectral formulas since then $0$ is an eigenvalue of $\J^{\ab}$. To deal with this obstacle, 
one usually considers these operators on the orthogonal complement of the eigenspace corresponding to 
the eigenvalue $0$. So letting $\Pi_0$ be the orthogonal projection onto $\{\P_0^{\ab}\}^{\perp}$, 
for $f \in L^2(d\m_{\ab})$ we can consider in the critical case
\begin{align*}
I_{\gamma}^{\ab} \Pi_0  f & = \sum_{n=1}^{\infty} \Big| n + \frac{\alpha+\beta+1}{2}\Big|^{-2\gamma i}
	\langle f, \P_n^{\ab} \rangle_{d\m_{\ab}} \P_n^{\ab}, \qquad \gamma \in \R, \quad \gamma \neq 0,\\
R_N^{\ab} \Pi_0 f & = \sum_{n=1}^{\infty} \Big| n + \frac{\alpha+\beta+1}{2}\Big|^{-N}
	\langle f, \P_n^{\ab} \rangle_{d\m_{\ab}} \delta^N \P_n^{\ab}, \qquad N=1,2,\ldots.
\end{align*}
These definitions are indeed correct, see Section \ref{sec:L2} below.
Moreover, since $\delta \P_0^{\ab}$ vanishes, the case of the Riesz transforms can actually be
covered by the definition in (ii) above. Thus in further considerations we will not distinguish 
the critical case and always denote the Riesz operators by $R_N^{\ab}$.

The operators $\H_*^{\ab}$, $g_V^{\ab}$, $g_H^{\ab}$, $g_{M,N}^{\ab}$ are not linear.
They are, however, associated with vector-valued linear operators taking values in some Banach space
$\mathbb{B}$. Indeed, it is convenient to identify each of them with a linear operator which maps
a scalar-valued function of $\theta \in (0,\pi)$ to a $\mathbb{B}$-valued function of $\theta$.
The corresponding nonlinear operator defined above is then obtained by taking the $\mathbb{B}$ norm
at each point $\theta$, or rather at a.a. $\theta$. Clearly, $\mathbb{B}$ will be $L^2(tdt)$ in the
cases of $g_V^{\ab}$ and $g_H^{\ab}$, and $L^2(t^{2M+2N-1}dt)$ in the case of $g_{M,N}^{\ab}$.
For $\H_{*}^{\ab}$ we shall, for technical reasons, choose $\mathbb{B}$ not as $L^{\infty}(dt)$ but
as the closed and separable subspace $\mathbb{X}\subset L^{\infty}(dt)$ consisting of all continuous
functions $f$ in $(0,\infty)$ which have finite limits as $t\to 0^+$ and as $t \to \infty$.
In all the four cases, we shall say that the operator is \emph{associated} with the corresponding
Banach space $\mathbb{B}$. Similarly, the linear operators $I_{\gamma}^{\ab}$ and $R_N^{\ab}$ will
be said to be associated with the Banach space $\mathbb{B}=\mathbb{C}$.

To obtain the boundedness results for our operators, we shall see that they are vector-valued
Calder\'on-Zygmund operators, in the sense that we now define. As always, this definition goes via the
kernel. So let $\mathbb{B}$ be a Banach space and let $K(\theta,\varphi)$ be a kernel defined on
$(0,\pi)\times (0,\pi) \backslash \{(\theta,\varphi):\theta=\varphi\}$ and taking values in $\mathbb{B}$.
We say that $K(\theta,\varphi)$ is a standard kernel in the sense of the space of homogeneous type
$((0,\pi),d\m_{\ab},|\cdot|)$ if it satisfies the growth estimate
\begin{equation} \label{gr}
\|K(\theta,\varphi)\|_{\mathbb{B}} \lesssim \frac{1}{\m_{\ab}(B(\theta,|\varphi-\theta|))}
\end{equation}
and the smoothness estimates
\begin{align}
\| K(\theta,\varphi) - K(\theta',\varphi)\|_{\mathbb{B}} 
	& \lesssim \frac{|\theta-\theta'|}{|\theta-\varphi|}\;
	\frac{1}{\m_{\ab}(B(\theta,|\varphi-\theta|))}, \qquad |\theta-\varphi| > 2|\theta-\theta'|,
	\label{sm1} \\
\| K(\theta,\varphi) - K(\theta,\varphi')\|_{\mathbb{B}} & 
	\lesssim \frac{|\varphi-\varphi'|}{|\theta-\varphi|}\;
	\frac{1}{\m_{\ab}(B(\theta,|\varphi-\theta|))}, \qquad |\theta-\varphi| > 2|\varphi-\varphi'|; \label{sm2}
\end{align}
here $B(\theta,r)$ denotes the ball (interval) centered at $\theta$ and of radius $r$.
When $K(\theta,\varphi)$ is scalar-valued, i.e. $\mathbb{B}=\mathbb{C}$, the difference conditions
\eqref{sm1} and \eqref{sm2} can be replaced by the more convenient gradient condition
\begin{equation} \label{sm}
|\partial_{\theta} K(\theta,\varphi)| + |\partial_{\varphi} K(\theta,\varphi)| \lesssim
\frac{1}{|\theta-\varphi| \m_{\ab}(B(\theta,|\varphi-\theta|))}.
\end{equation}
Notice that in these formulas, the ball $B(\theta,|\varphi-\theta|)$ can be replaced by
$B(\varphi,|\varphi-\theta|)$, in view of the doubling property of $\m_{\ab}$.

A linear operator $T$ assigning to each $f \in L^2(d\m_{\ab})$ a measurable $\mathbb{B}$-valued
function $Tf$ on $(0,\pi)$ is said to be a (vector-valued) Calder\'on-Zygmund operator in the sense of 
the space $((0,\pi),d\m_{\ab},|\cdot|)$ associated with $\mathbb{B}$ if
\begin{itemize}
\item[(a)] $T$ is bounded from $L^2(d\m_{\ab})$ to $L^2_{\mathbb{B}}(d\m_{\ab})$, and
\item[(b)] there exists a standard $\mathbb{B}$-valued kernel $K(\theta,\varphi)$ such that
\begin{equation} \label{id_CZ}
Tf(\theta) = \int_{(0,\pi)} K(\theta,\varphi) f(\varphi) \, d\m_{\ab}(\varphi), \qquad \textrm{a.e.}
	\;\; \theta \notin \support f,
\end{equation}
for every $f\in L^2((0,\pi),d\m_{\ab})$ with compact support in $(0,\pi)$.
\end{itemize}
When $(b)$ holds, we write $T \sim K(\theta,\varphi)$ and say that $T$ is associated with $K$.
Here integration of $\mathbb{B}$-valued functions is understood in Bochner's sense, and 
$L^2_{\mathbb{B}}(d\m_{\ab})$ is the Bochner-Lebesgue space of all $\mathbb{B}$-valued $d\m_{\ab}$-square
integrable functions on $(0,\pi)$. It is well known that a large part of the classical theory
of Calder\'on-Zygmund operators remains valid, with appropriate adjustments, when the underlying
space is of homogeneous type and the associated kernels are vector-valued, see for instance the comments
in \cite[p.\,649]{NS} and references given there.

The main result of the paper reads as follows.
\begin{thm} \label{thm:main}
Assume that $\alpha, \beta \ge -1\slash 2$. The operators $I_{\gamma}^{\ab}$, $\alpha+\beta > -1$, 
$\gamma \neq 0$, and $R_N^{\ab}$, $N=1,2,\ldots$, are Calder\'on-Zygmund operators in the sense of 
the space of homogeneous type $((0,\pi),d\m_{\ab},|\cdot|)$. 
Moreover, each of the operators $\H_{*}^{\ab}$, $g_V^{\ab}$, $g_H^{\ab}$ and $g_{M,N}^{\ab}$,
$M,N=0,1,2,\ldots$, $M+N>1$, viewed as a vector-valued operator, is a Calder\'on-Zygmund operator 
in the sense of the space $((0,\pi),d\m_{\ab},|\cdot|)$ associated with $\mathbb{B}$, and here 
$\mathbb{B}$ is $\mathbb{X}$, $L^2(tdt)$, $L^2(tdt)$ or $L^2(t^{2M+2N-1}dt)$, respectively.
\end{thm}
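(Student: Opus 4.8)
\emph{Proof strategy.} The plan is to verify, for each operator $T$ in the list and the corresponding Banach space $\mathbb B$ (namely $\mathbb C$ for $I_{\gamma}^{\ab}$ and $R_N^{\ab}$, and $\mathbb X$, $L^2(tdt)$ or $L^2(t^{2M+2N-1}dt)$ in the remaining cases), the two conditions (a) and (b) in the definition of a Calder\'on--Zygmund operator: boundedness from $L^2(d\m_{\ab})$ into $L^2_{\mathbb B}(d\m_{\ab})$, and association with a standard $\mathbb B$-valued kernel satisfying the growth bound \eqref{gr} and the smoothness bounds \eqref{sm1}--\eqref{sm2} (or the gradient bound \eqref{sm} when $\mathbb B=\mathbb C$). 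The two requirements are essentially independent: the $L^2$-theory together with the identification of the kernels will be carried out in Section~\ref{sec:L2}, while the kernel estimates, by far the longest and most technical part, will occupy Section~\ref{sec:ker}.

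The $L^2$-bounds should all come out of the spectral decomposition \eqref{sres}--\eqref{Hser} and Parseval's identity. Since the multiplier $|n+\tfrac{\alpha+\beta+1}{2}|^{-2i\gamma}$ has modulus one, $I_{\gamma}^{\ab}$ is an $L^2(d\m_{\ab})$ isometry (on $\{\P_0^{\ab}\}^{\perp}$ in the critical case) and its defining series converges in $L^2(d\m_{\ab})$. For $R_N^{\ab}=\delta^N(\J^{\ab})^{-N/2}$ one combines the factorization $\J^{\ab}=\delta^*\delta+\big(\tfrac{\alpha+\beta+1}{2}\big)^2$ with the differentiation rule \eqref{jacdiff}, iterating to see that $\delta^N(\J^{\ab})^{-N/2}$ is a composition of bounded operators. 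For the $\mathbb B$-valued linear operators $f\mapsto\{\partial_t^M\delta^N\H_t^{\ab}f\}_{t>0}$ underlying $\H_*^{\ab}$, $g_V^{\ab}$, $g_H^{\ab}$ and $g_{M,N}^{\ab}$, one invokes for the maximal operator the general maximal theorem for symmetric diffusion semigroups (after subordination), and for the square functions one uses the operator identity $\partial_t^M\delta^N\H_t^{\ab}=(-1)^M R_N^{\ab}(\J^{\ab})^{(M+N)/2}\H_t^{\ab}$ to write
\begin{align*}
\big\|g_{M,N}^{\ab}f\big\|_{L^2(d\m_{\ab})}^2
&=\int_0^{\infty}\big\|R_N^{\ab}(\J^{\ab})^{(M+N)/2}\H_t^{\ab}f\big\|_{L^2(d\m_{\ab})}^2\,t^{2M+2N-1}\,dt\\
&\lesssim\int_0^{\infty}\big\|(\J^{\ab})^{(M+N)/2}\H_t^{\ab}f\big\|_{L^2(d\m_{\ab})}^2\,t^{2M+2N-1}\,dt
\lesssim\big\|f\big\|_{L^2(d\m_{\ab})}^2,
\end{align*}
the last step by Parseval in the eigenfunction basis and the evaluation of a Gamma integral (here $M+N>0$ is what guarantees convergence at $t=0$).

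Next I would identify the kernels. Feeding the eigenvalues $\lambda_n^{\ab}=(n+\tfrac{\alpha+\beta+1}{2})^2$ into the subordination formula $a^{-s}=\Gamma(s)^{-1}\int_0^{\infty}t^{s-1}e^{-at}\,dt$ gives
$$
I_{\gamma}^{\ab}\sim\frac{1}{\Gamma(2i\gamma)}\int_0^{\infty}t^{2i\gamma-1}H_t^{\ab}(\theta,\varphi)\,dt,\qquad
R_N^{\ab}\sim\frac{1}{\Gamma(N)}\int_0^{\infty}t^{N-1}\,\partial_{\theta}^{N}H_t^{\ab}(\theta,\varphi)\,dt,
$$
while $\H_*^{\ab}$ and $g_{M,N}^{\ab}$ are associated with the $\mathbb B$-valued kernels $\{H_t^{\ab}(\theta,\varphi)\}_{t>0}$ and $\{\partial_t^M\delta^N H_t^{\ab}(\theta,\varphi)\}_{t>0}$, respectively. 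That these formulas genuinely represent the operators away from the diagonal, i.e. that \eqref{id_CZ} holds for compactly supported $f$, will follow from Fubini's theorem --- interchanging the defining series or the $t$-integral with the $\varphi$-integration --- once the kernels are known to be $\m_{\ab}$-integrable in $\varphi$ off $\{\theta=\varphi\}$, which is a by-product of the estimates below; the continuity in $t$ and the existence of the limits as $t\to0^+$ and $t\to\infty$ needed for the $\mathbb X$-valued kernel of $\H_*^{\ab}$ will likewise be read off from that representation.

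The core of the matter, and the expected main obstacle, is the kernel estimates. The series \eqref{PJser} is useless here because of its oscillation, and the closed form through $F_4$ is not flexible enough; so, following the philosophy of the Laguerre arguments in \cite{Sa,NS}, I would start from the product formula for Jacobi polynomials of Dijksma and Koornwinder \cite{DK} and derive a symmetric double-integral representation of $H_t^{\ab}(\theta,\varphi)$ with a positive, explicit integrand --- essentially an integral over a product of two intervals, against a Jacobi-type weight, of an elementary kernel built out of $\cosh\tfrac{t}{2}$, $\sin\tfrac{\theta}{2}\sin\tfrac{\varphi}{2}$ and $\cos\tfrac{\theta}{2}\cos\tfrac{\varphi}{2}$. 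Differentiating under the integral sign then represents $\partial_t^M\delta^N H_t^{\ab}(\theta,\varphi)$, hence each of the kernels above, by similar integrals; pulling the $\mathbb B$-norm inside by Minkowski's integral inequality reduces \eqref{gr}, \eqref{sm1}--\eqref{sm2} and \eqref{sm} to size estimates for these integrals. One then splits $(0,\pi)^2$ according to the relative sizes of $|\theta-\varphi|$ and of $\theta,\varphi,\pi-\theta,\pi-\varphi$, uses the standard two-sided estimate
$$
\m_{\ab}\big(B(\theta,|\theta-\varphi|)\big)\simeq|\theta-\varphi|\,\Big(\sin\tfrac{\theta}{2}+|\theta-\varphi|\Big)^{2\alpha+1}\Big(\cos\tfrac{\theta}{2}+|\theta-\varphi|\Big)^{2\beta+1},
$$
and handles the remaining $t$-integrals by elementary means. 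The hypothesis $\alpha,\beta\ge-\tfrac12$ enters precisely at this stage: it makes the exponents $2\alpha+1$, $2\beta+1$ nonnegative, so that the weight in the double-integral representation is a genuine non-singular measure and the estimates close; and $M+N>1$ is exactly what is needed for the $g_{M,N}^{\ab}$-kernel to satisfy the growth bound \eqref{gr}, the borderline cases $g_V^{\ab}=g_{1,0}^{\ab}$ and $g_H^{\ab}=g_{0,1}^{\ab}$ calling for a separate, slightly more delicate treatment. Once all the kernel estimates are in place, the theorem follows by combining them with the $L^2$-boundedness and the off-diagonal representation \eqref{id_CZ}.
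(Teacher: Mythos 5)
Your overall architecture coincides with the paper's: the claim is split into $L^2$-boundedness, identification of the kernels off the diagonal, and the standard estimates, the latter obtained from the Dijksma--Koornwinder double-integral representation of $H_t^{\ab}$ exactly as you outline. The one genuine gap is the $L^2(d\m_{\ab})$-boundedness of $R_N^{\ab}$ for $N\ge 2$. Your claim that one can ``iterate'' the differentiation rule \eqref{jacdiff} to exhibit $\delta^N(\J^{\ab})^{-N\slash 2}$ as a composition of bounded operators does not work: \eqref{jacdiff} produces the factor $\sin\theta$, so already $\delta^2\P_n^{\ab}$ contains the term $\cos\theta\,\P_{n-1}^{\alpha+1,\beta+1}$, which destroys orthogonality; the family $\{\delta^N\P_n^{\ab}\}_n$ is not orthogonal for $N\ge 2$, so neither Parseval nor a naive factorization applies. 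The paper resolves this with a nontrivial decomposition (Lemma \ref{decomp}): using the recurrence \eqref{ffor} to re-expand the $\cos\theta$ factors and Fa\`a di Bruno's formula, $\delta^N\P_n^{\ab}$ is written with $\mathcal{O}(n^N)$ coefficients over the auxiliary orthonormal systems \eqref{adorth}, after which boundedness follows. Since your $L^2$ bound for $g_{M,N}^{\ab}$ is bootstrapped from the boundedness of $R_N^{\ab}$, the gap propagates there as well (the paper instead runs the same decomposition directly inside the Parseval computation for the square functions).

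Two smaller points. For $I_{\gamma}^{\ab}$ the subordination integral $\Gamma(2i\gamma)^{-1}\int_0^{\infty}e^{-t\sqrt{\lambda}}\,t^{2i\gamma-1}\,dt$ is only conditionally convergent at $t=0$, so matching the kernel \eqref{imag_ker} with the spectral definition is not a bare application of Fubini; the paper's argument begins with an integration by parts, which in turn requires knowing that $H_t^{\ab}(\theta,\varphi)\to 0$ as $t\to 0$ and as $t\to\infty$ off the diagonal. Also, the restriction $M+N>1$ for $g_{M,N}^{\ab}$ in the statement is purely organizational ($g_{1,0}^{\ab}=g_V^{\ab}$ and $g_{0,1}^{\ab}=g_H^{\ab}$ are simply listed separately); the kernel estimates are carried out uniformly for all $M+N>0$, and there is no delicate borderline case at $M+N=1$, contrary to what you suggest.
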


The proof of Theorem \ref{thm:main} splits naturally into the following three results.

\begin{prop} \label{prop:L2}
Let $\alpha,\beta \ge -1\slash 2$. The operators $I_{\gamma}^{\ab}$, $\alpha+\beta > -1$, 
$\gamma \neq 0$, $R_N^{\ab}$, $N=1,2,\ldots$, $\H_{*}^{\ab}$, $g_V^{\ab}$, $g_H^{\ab}$, and
$g_{M,N}^{\ab}$, $M,N=0,1,2,\ldots$, $M+N>1$, are bounded on $L^2(d\m_{\ab})$.
In particular, each of the operators $\H_{*}^{\ab}$, $g_V^{\ab}$, $g_H^{\ab}$,
$g_{M,N}^{\ab}$, $M,N=0,1,2,\ldots$, $M+N>1$, viewed as a vector-valued operator,
is bounded from $L^2(d\m_{\ab})$ to $L^2_{\mathbb{B}}(d\m_{\ab})$, where $\mathbb{B}$ 
is as in Theorem~\ref{thm:main}. Moreover, the operators $g_{V}^{\ab}=g_{1,0}^{\ab}$ and more 
generally $g_{M,0}^{\ab}$ are essentially isometries on $L^2$ in the sense that
$$
\|g_{M,0}^{\ab}(f)\|_{L^2(d\m_{\ab})} = c \|f\|_{L^2(d\m_{\ab})}
$$
with $c=c(M)$; however, in the case when $\alpha=\beta=-1\slash 2$, 
one must replace $f$ by $\Pi_0 f$ in the right-hand side here.
\end{prop}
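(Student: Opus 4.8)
The plan is to prove all the $L^2$ estimates by spectral methods; throughout write $c=(\alpha+\beta+1)\slash 2\ge 0$, so that $\J^{\ab}\P_n^{\ab}=(n+c)^2\P_n^{\ab}$ and $\delta^*\delta=\J^{\ab}-c^2$. The operator $I_\gamma^{\ab}$ is immediate: $|n+c|^{-2i\gamma}$ has modulus one and the hypothesis $\alpha+\beta>-1$ forces $|n+c|>0$ for every $n$, so Parseval's identity shows $I_\gamma^{\ab}$, and $I_\gamma^{\ab}\Pi_0$ in the critical case, to be an isometry of $L^2(d\m_{\ab})$. For $g_{M,0}^{\ab}$ I would argue directly: by Parseval, Fubini and $\int_0^\infty t^{2M-1}e^{-2t(n+c)}\,dt=\Gamma(2M)\,(2(n+c))^{-2M}$,
\[
\|g_{M,0}^{\ab}(f)\|_{L^2(d\m_{\ab})}^2=\int_0^\infty t^{2M-1}\sum_{n}(n+c)^{2M}e^{-2t(n+c)}\,\big|\langle f,\P_n^{\ab}\rangle_{d\m_{\ab}}\big|^2\,dt=\frac{\Gamma(2M)}{2^{2M}}\sum_{n}\big|\langle f,\P_n^{\ab}\rangle_{d\m_{\ab}}\big|^2,
\]
which gives the asserted identity with $c(M)=(\Gamma(2M)\slash 2^{2M})^{1\slash 2}$; when $\alpha=\beta=-1\slash 2$ one has $c=0$, the $n=0$ term drops out (here $M\ge 1$), and $f$ must be replaced by $\Pi_0 f$. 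The maximal operator $\H_*^{\ab}$ follows from Stein's classical maximal theorem for symmetric diffusion semigroups: $\{\H_t^{\ab}\}$ is a symmetric contraction semigroup on $L^2(d\m_{\ab})$ which, by the positivity of $H_t^{\ab}(\theta,\varphi)$ and the identity $\H_t^{\ab}1=e^{-tc}1\le 1$, is positivity preserving with $\H_t^{\ab}1\le 1$ and $\|\H_t^{\ab}f\|_\infty\le\|f\|_\infty$; hence $\big\|\sup_{t>0}|\H_t^{\ab}f|\big\|_{L^2(d\m_{\ab})}\lesssim\|f\|_{L^2(d\m_{\ab})}$.

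The core of the argument is $R_N^{\ab}=\delta^N(\J^{\ab})^{-N\slash 2}$ and $g_{M,N}^{\ab}$ for $N\ge 1$, and here I would exploit the way $\delta$ intertwines the Jacobi settings. Setting $D_{\ab}f:=(\delta f)\slash\sin\theta$, the rule $\delta\P_n^{\ab}=-\tfrac12\sqrt{\lambda_n^{\ab}-c^2}\,\sin\theta\,\P_{n-1}^{\alpha+1,\beta+1}$ together with $\lambda_n^{\ab}=\lambda_{n-1}^{\alpha+1,\beta+1}$ yields $D_{\ab}\J^{\ab}=\J^{\alpha+1,\beta+1}D_{\ab}$, $\|D_{\ab}f\|_{L^2(d\m_{\alpha+1,\beta+1})}^2=\tfrac14\langle(\J^{\ab}-c^2)f,f\rangle_{d\m_{\ab}}$, and consequently $\delta\,\H_t^{\ab}=(\sin\theta)\,\H_t^{\alpha+1,\beta+1}D_{\ab}$ — so $\delta$ transfers the Jacobi--Poisson semigroup, and more generally functions of $\J^{\ab}$, to the shifted setting. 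The algebraic input is $\delta^*\delta=\J^{\ab}-c^2$ and the commutator relation: $\delta\delta^*-\delta^*\delta$ is multiplication by $\big((\alpha+\beta+1)+(\alpha-\beta)\cos\theta\big)\slash\sin^2\theta$, which is nonnegative exactly when $\alpha,\beta\ge -1\slash 2$, its endpoint values being $2\alpha+1$ and $2\beta+1$. From these one proves, by induction on $N$, the quadratic-form comparison $(\delta^*)^N\delta^N\lesssim(\J^{\ab})^N$. Granting it, and using $\partial_t^k\H_t^{\ab}=(-1)^k(\J^{\ab})^{k\slash 2}\H_t^{\ab}$,
\[
\|R_N^{\ab}f\|_{L^2(d\m_{\ab})}^2=\big\langle(\delta^*)^N\delta^N(\J^{\ab})^{-N\slash 2}f,(\J^{\ab})^{-N\slash 2}f\big\rangle_{d\m_{\ab}}\lesssim\|f\|_{L^2(d\m_{\ab})}^2,
\]
the critical case needing no separate treatment since $\delta^N\P_0^{\ab}=0$; and for the square functions
\[
\|g_{M,N}^{\ab}(f)\|_{L^2(d\m_{\ab})}^2=\int_0^\infty t^{2M+2N-1}\big\|\delta^N\partial_t^M\H_t^{\ab}f\big\|_{L^2(d\m_{\ab})}^2\,dt\lesssim\int_0^\infty t^{2M+2N-1}\big\|\partial_t^{M+N}\H_t^{\ab}f\big\|_{L^2(d\m_{\ab})}^2\,dt,
\]
and the last integral equals a constant times $\|f\|_{L^2(d\m_{\ab})}^2$ by the $N=0$ computation with $M$ replaced by $M+N$. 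Finally, the vector-valued statements $L^2(d\m_{\ab})\to L^2_{\mathbb B}(d\m_{\ab})$ are mere restatements of the scalar bounds for $\H_*^{\ab}$, $g_V^{\ab}$, $g_H^{\ab}$ and $g_{M,N}^{\ab}$, because taking the $\mathbb B$-norm at each point is exactly how those scalar operators were defined.

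I expect the quadratic-form inequality $(\delta^*)^N\delta^N\lesssim(\J^{\ab})^N$ to be the main obstacle. It forces one to keep track of the non-commutativity of $\delta$, $\delta^*$ and $\J^{\ab}$ — in particular to control, at each step of the induction, the multiplier $\big((\alpha+\beta+1)+(\alpha-\beta)\cos\theta\big)\slash\sin^2\theta$, which is singular at both endpoints of $(0,\pi)$ — and it is precisely here that the hypothesis $\alpha,\beta\ge -1\slash 2$ enters essentially, through the sign of that multiplier.
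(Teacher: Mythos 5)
Your handling of $I_{\gamma}^{\ab}$ and of the isometry property of $g_{M,0}^{\ab}$ coincides with the paper's (Plancherel plus the Gamma integral). For $\H_{*}^{\ab}$ you take a different but legitimate route: the paper transfers the bound to the known Jacobi--Poisson maximal operator on $(-1,1)$ via subordination, whereas you invoke Stein's maximal theorem directly; this works, provided you note that positivity of $H_t^{\ab}$ is available and that Stein's axiom $T_t1=1$ is recovered by passing to the conservative semigroup $e^{t(\alpha+\beta+1)\slash 2}\H_t^{\ab}$, which dominates $\H_t^{\ab}$ pointwise. Your commutator identity is also correct: $\delta\delta^*-\delta^*\delta$ is multiplication by $b'=\big((\alpha+\beta+1)+(\alpha-\beta)\cos\theta\big)\slash\sin^2\theta\ge 0$ precisely for $\alpha,\beta\ge-1\slash2$, and it does settle $N=1$ and $N=2$, since
$\|\delta^2f\|^2_{L^2(d\m_{\ab})}=\|(\J^{\ab}-(\tfrac{\alpha+\beta+1}2)^2)f\|^2_{L^2(d\m_{\ab})}-\int b'\,|\delta f|^2\,d\m_{\ab}$.

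The genuine gap is the quadratic-form inequality $(\delta^*)^N\delta^N\lesssim(\J^{\ab})^N$ for $N\ge3$, on which your treatment of $R_N^{\ab}$ and of every $g_{M,N}^{\ab}$ with $N\ge1$ rests. You assert it ``by induction on $N$,'' but the induction does not close. Writing $A=\J^{\ab}-(\tfrac{\alpha+\beta+1}2)^2=\delta^*\delta$, one step of your $N=2$ argument gives $\|\delta^3f\|^2\le\|\delta^*\delta^2f\|^2=\|A\,\delta f\|^2$, and commuting $A$ past the remaining $\delta$ yields $A\delta f=\delta Af-b'\delta f$; the term $\|\delta Af\|^2=\langle A^3f,f\rangle$ is fine, but you are left needing $\|b'\,\delta f\|_{L^2(d\m_{\ab})}\lesssim\|(\J^{\ab})^{3\slash2}f\|_{L^2(d\m_{\ab})}$, where $b'\sim(2\alpha+1)\slash\theta^2$ near the endpoint is an unbounded multiplier. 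Positivity of $b'$ is useless here because the error now enters in norm rather than as a quadratic form, and such weighted Hardy-type estimates are of the same depth as the statement being proved; indeed $(\delta^*)^N\delta^N\lesssim(\J^{\ab})^N$ is literally equivalent to the $L^2$-boundedness of $R_N^{\ab}$, so the reduction by itself is no progress. The paper circumvents all of this with Lemma \ref{decomp}: using Fa\`a di Bruno together with the recurrence \eqref{ffor} for $\cos\theta\,\P_{n-1}^{\alpha+1,\beta+1}$, it expands $\delta^N\P_n^{\ab}$ as a finite sum of terms $\mathcal{O}(n^N)\,(\sin\frac{\theta}2)^{\nu}(\cos\frac{\theta}2)^{\eta}\P_{n-p}^{\alpha+\nu,\beta+\eta}$, each family being orthonormal in $L^2(d\m_{\ab})$, after which Bessel's inequality gives $R_N^{\ab}$ and $g_{M,N}^{\ab}$ simultaneously. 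You would need a substitute for \eqref{ffor} --- some mechanism re-expanding the lower-order terms created when the derivatives fall on the factor $\sin\theta$ in $\delta=\sin\theta\,D_{\ab}$ --- before your induction can be completed.
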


For $\alpha,\beta \ge -1\slash 2$ define the kernels
\begin{align}
K^{\ab}_{\gamma}(\theta,\varphi) & = \frac{1}{\Gamma(2i\gamma)} \int_0^{\infty}
	H_t^{\ab}(\theta,\varphi) t^{2i\gamma-1}dt, 
		\qquad \gamma \in \R,\quad \gamma \neq 0, \quad \alpha+\beta > -1, \label{imag_ker} \\
R_N^{\ab}(\theta,\varphi) & = \frac{1}{\Gamma(N)}\int_0^{\infty} 
	\partial_{\theta}^N H_t^{\ab}(\theta,\varphi) t^{N-1}\,dt, \qquad N\ge 1. \nonumber
\end{align}

\begin{prop} \label{prop:assoc}
Let $\alpha,\beta \ge -1\slash 2$. The operators
$I_{\gamma}^{\ab}$, $\alpha+\beta > -1$, $\gamma \neq 0$, and $R_N^{\ab}$, 
$N=1,2,\ldots$, are associated with the following kernels:
$$
I_{\gamma}^{\ab} \sim K^{\ab}_{\gamma}(\theta,\varphi), \qquad 
R_N^{\ab} \sim R_N^{\ab}(\theta,\varphi).
$$
Further, the operators $\H_{*}^{\ab}$, $g_{V}^{\ab}$, $g_{H}^{\ab}$, $g_{M,N}^{\ab}$,
$M,N=0,1,2,\ldots$, $M+N>1$, viewed as vector-valued operators,
are associated with the following $\mathbb{B}$-valued kernels:
$$
\begin{array}{lll}
& \H_{*}^{\ab}   \sim \{H_t^{\ab}(\theta,\varphi)\}_{t>0}, \qquad 
& g_{V}^{\ab}   \sim \{\partial_t H_t^{\ab}(\theta,\varphi)\}_{t>0}, \\
& g_{H}^{\ab}   \sim \{\partial_{\theta} H_t^{\ab}(\theta,\varphi)\}_{t>0}, \qquad
& g_{M,N}^{\ab}  \sim \{\partial_t^{M}\partial_{\theta}^N H_t^{\ab}(\theta,\varphi)\}_{t>0}.
\end{array}
$$
Here $\mathbb{B}$ is as in Theorem \ref{thm:main}.
\end{prop}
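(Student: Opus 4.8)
The plan is to prove Proposition \ref{prop:assoc} by verifying condition (b) in the definition of a Calder\'on-Zygmund operator for each of the listed operators, that is, by showing that each operator coincides, off the diagonal, with integration against the stated kernel. Since Proposition \ref{prop:L2} already supplies $L^2$-boundedness (condition (a)), and since the required \emph{standard-kernel estimates} for these kernels are the subject of Section \ref{sec:ker}, the remaining task here is purely the identification \eqref{id_CZ}. The common mechanism is the subordination / Bochner principle: for each operator the spectral multiplier is expressed as an integral in $t$ of the Jacobi--Poisson semigroup, and one interchanges this $t$-integral with the spatial integral defining $\H_t^{\ab}$, justifying the interchange by absolute (or Bochner) convergence away from the diagonal.

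First I would treat the scalar operators. For $I_\gamma^{\ab}$ one uses the identity $\lambda^{-2i\gamma} = \frac{1}{\Gamma(2i\gamma)}\int_0^\infty e^{-t\sqrt{\lambda}}\, t^{2i\gamma-1}\,dt$ (valid for $\lambda>0$, understood as an oscillatory-but-convergent integral, or first on a strip and then continued), applied to $\lambda = \lambda_n^{\ab}$ with $\alpha+\beta\ne -1$ so that $\lambda_n^{\ab}>0$. Plugging the series \eqref{Hser} in and interchanging summation/integration gives $I_\gamma^{\ab} f = \int_0^\infty \H_t^{\ab} f\, t^{2i\gamma-1}\,dt/\Gamma(2i\gamma)$ in a suitable sense; then, for $f$ with compact support and $\theta\notin\support f$, one inserts the integral representation of $\H_t^{\ab}$ and applies Fubini to land on $K_\gamma^{\ab}(\theta,\varphi)$. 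The interchange is legitimate because for $\theta$ bounded away from $\support f$ the kernel $H_t^{\ab}(\theta,\varphi)$ and its needed $t$-behaviour are controlled uniformly — precisely the content of the bounds proved in Section \ref{sec:ker}, which give integrability of $H_t^{\ab}(\theta,\varphi)\,t^{2i\gamma-1}$ (resp.\ $\partial_\theta^N H_t^{\ab}(\theta,\varphi)\,t^{N-1}$) in $t$ over $(0,\infty)$ for $\theta\ne\varphi$. The same template, with $\lambda^{-N/2} = \frac{1}{\Gamma(N)}\int_0^\infty e^{-t\sqrt{\lambda}}\,t^{N-1}\,dt$ and term-by-term application of $\delta^N$ using \eqref{jacdiff}, handles $R_N^{\ab}$ (including the critical case, since $\delta^N\P_0^{\ab}=0$ removes the problematic $n=0$ term).

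Next I would treat the vector-valued operators $\H_*^{\ab}$, $g_V^{\ab}$, $g_H^{\ab}$, $g_{M,N}^{\ab}$. Here the argument is more direct: the operator is, by definition, $f\mapsto \big(t\mapsto \partial_t^M\delta^N\H_t^{\ab}f(\theta)\big)$ valued in $\mathbb B$, and since $\H_t^{\ab}f(\theta)=\int_0^\pi H_t^{\ab}(\theta,\varphi)f(\varphi)\,d\m_{\ab}(\varphi)$, differentiating under the integral sign (justified for $f$ compactly supported and $\theta\notin\support f$ by the smoothness and the estimates of Section \ref{sec:ker}) gives that the $\mathbb B$-valued function of $\theta$ equals $\int_0^\pi \{\partial_t^M\partial_\theta^N H_t^{\ab}(\theta,\varphi)\}_{t>0}\, f(\varphi)\,d\m_{\ab}(\varphi)$, a Bochner integral, which is exactly the asserted association. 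For $\H_*^{\ab}$ one additionally checks that $t\mapsto H_t^{\ab}(\theta,\varphi)$ belongs to $\mathbb X$ — continuous on $(0,\infty)$ with limits $0$ as $t\to\infty$ and (by dominated convergence and the $t\to 0^+$ behaviour of the kernel) a finite limit as $t\to 0^+$ — so that the $\mathbb B=\mathbb X$ choice is consistent, and that the Bochner integral commutes with evaluation.

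The main obstacle is the rigorous justification of the interchanges of integration (Fubini) and of differentiation under the integral sign \emph{away from the diagonal}: one must know that, for $\theta$ in a fixed compact set disjoint from $\support f$, the quantities $|H_t^{\ab}(\theta,\varphi)|\,t^{\mathrm{Re}(2i\gamma)-1} = |H_t^{\ab}(\theta,\varphi)|\,t^{-1}$, $|\partial_\theta^N H_t^{\ab}(\theta,\varphi)|\,t^{N-1}$, and in the vector-valued case the $\mathbb B$-norms $\|\{\partial_t^M\partial_\theta^N H_t^{\ab}(\theta,\varphi)\}\|_{\mathbb B}$, are integrable in $\varphi$ against $d\m_{\ab}$ — equivalently that the corresponding kernels are locally integrable off the diagonal. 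This is not circular: it follows from the growth bound \eqref{gr} (once established in Section \ref{sec:ker}), which already gives local integrability of a standard kernel away from the diagonal; for the scalar kernels $K_\gamma^{\ab}$ and $R_N^{\ab}(\theta,\varphi)$ one needs, a bit more carefully, that the $t$-integral defining them converges absolutely for $\theta\ne\varphi$, which again is extracted from the $t$-dependent kernel estimates of Section \ref{sec:ker} (small-$t$ integrability from the off-diagonal decay, large-$t$ integrability from the exponential factor $e^{-t(\alpha+\beta+1)/2}$ when $\alpha+\beta>-1$, and from $\delta^N$ killing the constant term when $\alpha+\beta=-1$). Once these integrability facts are in hand, all the interchanges are routine applications of Fubini's theorem and of the standard differentiation-under-the-integral lemma, and the identifications \eqref{id_CZ} follow.
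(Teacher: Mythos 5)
Your overall strategy --- subordination plus interchange of sums and integrals justified by off-diagonal kernel bounds, with the standard estimates of Section \ref{sec:ker} supplying the integrability --- is the same as the paper's, which itself only sketches the argument and defers the details to the Hermite and Laguerre literature. There is, however, one concrete gap, in the treatment of $I_{\gamma}^{\ab}$. The subordination identity $\mu^{-2i\gamma}=\frac{1}{\Gamma(2i\gamma)}\int_0^\infty e^{-t\mu}t^{2i\gamma-1}\,dt$ has an integrand of modulus $e^{-t\mu}t^{-1}$, so the integral is only conditionally convergent at $t=0$; consequently the interchange of the spectral sum over $n$ with this $t$-integral cannot be done by Fubini, and your phrase ``in a suitable sense'' sits exactly on the hardest point. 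The paper's device is to integrate by parts in $t$ in \eqref{imag_ker} first, which replaces $H_t^{\ab}(\theta,\varphi)\,t^{2i\gamma-1}$ by the absolutely controllable $\partial_t H_t^{\ab}(\theta,\varphi)\,t^{2i\gamma}$; this requires checking that there are no boundary terms (i.e.\ $H_t^{\ab}(\theta,\varphi)\to 0$ as $t\to 0$ and $t\to\infty$ off the diagonal, which follows from Proposition \ref{prop:rep_H}) and the bound $\int_0^\infty|\partial_t H_t^{\ab}(\theta,\varphi)|\,dt\lesssim 1$ for $\theta,\varphi$ ranging over disjoint compact sets. Your observation that $|H_t^{\ab}(\theta,\varphi)|\,t^{-1}$ is integrable in $t$ off the diagonal takes care of the kernel side of \eqref{id_CZ} but not of the operator side.

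Two further, more minor, discrepancies. First, $I_{\gamma}^{\ab}$ and $R_N^{\ab}$ are defined only as $L^2$-convergent spectral series, so the paper establishes \eqref{id_CZ} in weak form, testing $\langle Tf,g\rangle_{d\m_{\ab}}$ against $g\in C_c^{\infty}(0,\pi)$ with support disjoint from that of $f$ (and, for the vector-valued operators, pairing with a dense family in the dual Bochner space, or with point masses $\delta_{t_0}$ for $\H_*^{\ab}$); your pointwise manipulation of $I_{\gamma}^{\ab}f(\theta)$ presupposes a pointwise representation of the series that is itself part of what must be proved. Second, for $R_N^{\ab}$ and for the $g$-functions with a nontrivial horizontal component, applying $\delta^N$ term by term via \eqref{jacdiff} is not enough when $N>1$, since $\{\delta^N\P_n^{\ab}\}$ is then no longer orthogonal; the paper needs the decomposition of $\delta^N\P_n^{\ab}$ from Lemma \ref{decomp} into the auxiliary orthonormal systems to control the resulting series. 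Apart from these points, your inventory of the required ingredients (the off-diagonal growth bounds, the strengthened growth condition for $R_N^{\ab}(\theta,\varphi)$, and the verification that $t\mapsto H_t^{\ab}f(\theta)$ lies in $\mathbb{X}$) agrees with the paper's.
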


\begin{thm} \label{thm:stand}
Assume that $\alpha,\beta \ge -1\slash 2$. The scalar-valued kernels $K^{\ab}_{\gamma}(\theta,\varphi)$, 
$\gamma \in \R$, $\gamma \neq 0$, $\alpha+\beta > -1$, and $R_N^{\ab}(\theta,\varphi)$, $N=1,2,\ldots$,
satisfy the standard estimates \eqref{gr}, with $\mathbb{B}=\mathbb{C}$, and \eqref{sm}.
Further, the vector-valued kernels appearing in Proposition \ref{prop:assoc} satisfy
the standard estimates \eqref{gr}, \eqref{sm1} and \eqref{sm2}, with $\mathbb{B}$ as before.
\end{thm}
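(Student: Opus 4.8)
The whole theorem reduces to estimating $H_t^{\ab}(\theta,\varphi)$ and its derivatives in $t$ and $\theta$, since every kernel in the statement is built from such derivatives, either directly (as a $\mathbb B$-valued family indexed by $t$) or after integration in $t$ against a power (in the cases of $I_\gamma^{\ab}$ and $R_N^{\ab}$). So the plan is: first establish a workable pointwise representation of $H_t^{\ab}(\theta,\varphi)$; second, differentiate it and prove pointwise bounds for $\partial_t^M\partial_\theta^N H_t^{\ab}(\theta,\varphi)$ that display the correct decay in $t$ both as $t\to0^+$ and $t\to\infty$, together with the right behaviour in the spatial variables relative to $\m_{\ab}(B(\theta,|\varphi-\theta|))$; third, assemble these into the growth and smoothness estimates \eqref{gr}, \eqref{sm1}, \eqref{sm2}, \eqref{sm} for each operator. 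I would use the symmetric double-integral representation of $H_t^{\ab}$ announced in the introduction (derived in Section~\ref{sec:ker} from the Dijksma--Koornwinder product formula), rather than the closed $F_4$ formula, precisely because it makes the oscillation-free structure manifest and is amenable to differentiation under the integral sign.

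For the scalar kernels I would argue as follows. Write $K_\gamma^{\ab}(\theta,\varphi)=\frac1{\Gamma(2i\gamma)}\int_0^\infty H_t^{\ab}(\theta,\varphi)\,t^{2i\gamma-1}\,dt$; since $|t^{2i\gamma-1}|=t^{-1}$, the growth bound \eqref{gr} will follow from an estimate of the form $|H_t^{\ab}(\theta,\varphi)|\lesssim \min\{t,t^{-?}\}\cdot(\text{spatial factor})$ that makes $\int_0^\infty |H_t^{\ab}(\theta,\varphi)|\,\frac{dt}{t}\lesssim \m_{\ab}(B(\theta,|\varphi-\theta|))^{-1}$. One splits the $t$-integral at $t\simeq|\theta-\varphi|$ (or at the natural scale coming from the representation): for small $t$ the kernel is concentrated near the diagonal and integrable because of a factor like $t/(t+|\theta-\varphi|)^{\cdots}$, while for large $t$ the exponential/hyperbolic decay in $t$ built into the Poisson kernel takes over; the surviving constant is exactly the reciprocal of the measure of the relevant ball. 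For the derivative bound \eqref{sm} one differentiates $K_\gamma^{\ab}$ (resp.\ $R_N^{\ab}$) in $\theta$ or $\varphi$ under the integral, which costs one extra power of $(t+|\theta-\varphi|)^{-1}$, producing precisely the extra $|\theta-\varphi|^{-1}$ demanded by \eqref{sm}. The kernel $R_N^{\ab}(\theta,\varphi)=\frac1{\Gamma(N)}\int_0^\infty \partial_\theta^N H_t^{\ab}(\theta,\varphi)\,t^{N-1}\,dt$ is handled the same way: each of the $N$ $\theta$-derivatives produces a gain of $(t+|\theta-\varphi|)^{-1}$, and $N$ of these against the $t^{N-1}$ weight combine to leave, after the $t$-integration, the same $\m_{\ab}(B)^{-1}$ bound; one more derivative gives \eqref{sm}.

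For the vector-valued kernels the Banach-space norm is an $L^2(t^{2M+2N-1}\,dt)$ norm (or the sup-norm on $\mathbb X$ for $\H_*^{\ab}$), so \eqref{gr} becomes $\big\|\partial_t^M\partial_\theta^N H_t^{\ab}(\theta,\varphi)\big\|_{L^2(t^{2M+2N-1}dt)}\lesssim \m_{\ab}(B(\theta,|\varphi-\theta|))^{-1}$, which one proves by squaring, integrating the pointwise bound for $|\partial_t^M\partial_\theta^N H_t^{\ab}|$ against $t^{2M+2N-1}$, and splitting the $t$-integral at the diagonal scale exactly as above — the exponents $2M+2N-1$ are rigged so that the powers of $t$ balance. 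For $\H_*^{\ab}$ one uses that $|H_t^{\ab}(\theta,\varphi)|\lesssim \m_{\ab}(B)^{-1}$ uniformly in $t$ (which is weaker and follows from the small-$t$ bound plus large-$t$ decay), and one must also check that $t\mapsto H_t^{\ab}(\theta,\varphi)$ genuinely lies in $\mathbb X$, i.e.\ is continuous with finite limits at $0^+$ and $\infty$ — the limit at $\infty$ is $0$ (or $\P_0\otimes\P_0$ in the critical case) and at $0^+$ it is finite off the diagonal by dominated convergence. The smoothness estimates \eqref{sm1}, \eqref{sm2} reduce, via the mean value theorem applied inside the Bochner/$L^2(t\,dt)$ norm, to pointwise bounds for one further $\theta$- or $\varphi$-derivative of $\partial_t^M\partial_\theta^N H_t^{\ab}$, integrated as before; the hypothesis $|\theta-\varphi|>2|\theta-\theta'|$ guarantees that the intermediate point stays comparable to $\theta$ so the spatial factor does not degrade.

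The main obstacle, and the reason Section~\ref{sec:ker} is the longest part of the paper, is the second step: extracting clean pointwise bounds for $\partial_t^M\partial_\theta^N H_t^{\ab}(\theta,\varphi)$ from the double-integral representation. The integrand involves the variables $\theta,\varphi$ through quantities like $\sin\frac\theta2\sin\frac\varphi2$ and $\cos\frac\theta2\cos\frac\varphi2$ coupled with an integration against a probability-type measure with a Jacobi-weight density, and differentiating in $\theta$ brings down factors that must be controlled uniformly; moreover the estimate must be uniform up to the endpoints $\theta,\varphi\to0$ and $\to\pi$, where the measure $\m_{\ab}$ degenerates, which is exactly why the restriction $\alpha,\beta\ge-1/2$ is imposed (it keeps the relevant weight exponents nonnegative so that certain crude majorizations are available). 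I expect the bulk of the work to be a careful case analysis according to the relative sizes of $t$, $|\theta-\varphi|$, and the distances of $\theta,\varphi$ to the endpoints $0$ and $\pi$, together with elementary but delicate integral estimates of the resulting one-dimensional integrals; once these pointwise kernel bounds are in hand, the passage to \eqref{gr}, \eqref{sm1}, \eqref{sm2}, \eqref{sm} is routine splitting and integration in $t$ as sketched above.
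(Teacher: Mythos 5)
Your overall architecture --- use the Dijksma--Koornwinder double-integral representation of $H_t^{\ab}$, differentiate under the integral sign, split the $t$-integral into small and large $t$, and reduce the smoothness estimates to gradient bounds via the mean value theorem --- is exactly the paper's. But the plan has a genuine gap at its central step, the one you yourself flag as ``the bulk of the work'' and defer to ``a careful case analysis according to the relative sizes of $t$, $|\theta-\varphi|$, and the distances of $\theta,\varphi$ to the endpoints.'' The whole point of the paper's technique is that no such case analysis is needed, and without the mechanism that replaces it your plan does not close. After the $t$-integration (split at $t=1$, with the substitution $t\mapsto\sqrt{\q}\,s$ in the small-$t$ part), everything reduces to bounding
\[
\iint \frac{d\Pi_{\alpha}(u)\,d\Pi_{\beta}(v)}{q(\theta,\varphi,u,v)^{\alpha+\beta+3\slash 2}}
\qquad\text{and}\qquad
\iint \frac{d\Pi_{\alpha}(u)\,d\Pi_{\beta}(v)}{q(\theta,\varphi,u,v)^{\alpha+\beta+2}}
\]
by $\m_{\ab}(B(\theta,|\varphi-\theta|))^{-1}$ and $\big(|\theta-\varphi|\,\m_{\ab}(B(\theta,|\varphi-\theta|))\big)^{-1}$, respectively; this is Lemma \ref{bridge}, and it is where the weights $(\theta+\varphi)^{2\alpha+1}(\pi-\theta+\pi-\varphi)^{2\beta+1}$ of Lemma \ref{ball} are actually produced. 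The quantity $q$ depends on $(u,v)$ through $(1-u)\theta\varphi+(1-v)(\pi-\theta)(\pi-\varphi)$, and it is the integration of the negative power of $q$ against $d\Pi_{\alpha}(u)\,d\Pi_{\beta}(v)$, via the one-dimensional estimate of Lemma \ref{lem58}, that yields the reciprocal ball measure. Your schematic ``$t\slash(t+|\theta-\varphi|)^{\cdots}$ times a spatial factor'' treats the spatial dependence as if it lived only in $|\theta-\varphi|$, which it does not; the endpoint degeneration of $\m_{\ab}$ is encoded entirely in the $(u,v)$-integral, and that is exactly the part your plan leaves unexplained.

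Several further ingredients are needed to make the differentiation step precise. First, the inequality $|\partial_{\theta}q|\lesssim\sqrt{q}$ (Lemma \ref{trig}) is what guarantees that each spatial derivative costs only half a power of $\cosh\frac t2-1+\q$; for higher-order derivatives this bookkeeping is organized via Fa\`a di Bruno's formula (Lemmas \ref{derb} and \ref{general}), and in the case $N=0$, $M\ge 1$, $\alpha+\beta=-1$ a cancellation must be exhibited by hand to get sufficient decay for large $t$. Second, for the vector-valued smoothness estimates the mean value theorem produces an intermediate point $\widetilde{\theta}$ depending on $t$ (and on $u,v$ after interchanging), and one needs $q(\widetilde{\theta},\varphi,u,v)\simeq q(\theta,\varphi,u,v)$ uniformly (Lemma \ref{lem:comp}); this is not simply ``$\widetilde{\theta}$ comparable to $\theta$,'' since that fails when $\varphi\ge 2\theta$ and a separate domination argument is required there. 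Third, Minkowski's integral inequality is needed to pull the $L^2(t^{2M+2N-1}dt)$ norm inside the $(u,v)$-integral before performing the $t$-integration. Finally, a small correction: the restriction $\alpha,\beta\ge-1\slash 2$ enters through the validity of the Dijksma--Koornwinder product formula itself (with $\Pi_{-1\slash 2}$ taken as a limit of point masses), not through keeping weight exponents nonnegative in a crude majorization.
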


The proofs of Propositions \ref{prop:L2} and \ref{prop:assoc} are given in Section \ref{sec:L2}.
The proof of Theorem \ref{thm:stand} is the most technical part of the paper and is located in
Section \ref{sec:ker}. The restriction $\alpha,\beta \ge -1\slash 2$ in the results
is imposed by the method we use to prove the standard estimates in Theorem \ref{thm:stand}, 
and more precisely by a similar restriction in the fundamental formula of Dijksma and Koornwinder 
needed to derive suitable expressions for the kernels; see Section \ref{sec:ker} for details.

An important consequence of Theorem \ref{thm:main} is the following.

\begin{cor} \label{cor:main}
Let $\alpha,\beta \ge -1\slash 2$. Then each of the operators 
$I_{\gamma}^{\ab}$, $\alpha+\beta > -1$, $\gamma \neq 0$, $R_N^{\ab}$, 
$N=1,2,\ldots$, $\H_{*}^{\ab}$, $g_V^{\ab}$, $g_H^{\ab}$, and
$g_{M,N}^{\ab}$, $M,N=0,1,2,\ldots$, $M+N>1$, is bounded
on $L^p(wd\m_{\ab})$, $w\in A_p^{\ab}$, $1<p<\infty$, and from $L^1(wd\m_{\ab})$ 
to weak $L^1(wd\m_{\ab})$, $w \in A_1^{\ab}$.
\end{cor}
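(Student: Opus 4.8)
The plan is to deduce the corollary from Theorem~\ref{thm:main} by invoking the general Calder\'on--Zygmund theory on spaces of homogeneous type, in both its scalar and its Banach-space-valued form. Recall from Section~\ref{sec:prel} that $\m_{\ab}$ is doubling on $(0,\pi)$, so $((0,\pi),d\m_{\ab},|\cdot|)$ is genuinely a space of homogeneous type and the classes $A_p^{\ab}$ are precisely the Muckenhoupt weight classes attached to it. The starting point is that, by Theorem~\ref{thm:main}, each operator in the statement is --- either directly in the scalar case ($I_\gamma^{\ab}$, $R_N^{\ab}$), or in its vector-valued incarnation with values in the appropriate Banach space $\mathbb{B}$ (for $\H_*^{\ab}$, $g_V^{\ab}$, $g_H^{\ab}$, $g_{M,N}^{\ab}$) --- a Calder\'on--Zygmund operator in the sense of this space: it is bounded from $L^2(d\m_{\ab})$ to $L^2_{\mathbb{B}}(d\m_{\ab})$ and is associated with a standard $\mathbb{B}$-valued kernel satisfying \eqref{gr}, \eqref{sm1}, \eqref{sm2} (or \eqref{sm} when $\mathbb{B}=\mathbb{C}$).

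Next I would apply the weighted Calder\'on--Zygmund theorem for spaces of homogeneous type: a linear operator bounded on $L^2$ and associated with a standard kernel maps $L^p(wd\m_{\ab})$ into itself for every $w\in A_p^{\ab}$, $1<p<\infty$, and $L^1(wd\m_{\ab})$ into weak $L^1(wd\m_{\ab})$ for every $w\in A_1^{\ab}$. The same conclusions hold verbatim for Banach-space-valued kernels and operators, with $L^p$ replaced by $L^p_{\mathbb{B}}$ throughout; this vector-valued extension of the classical theory is well known and is precisely the one already invoked after the definition of a vector-valued Calder\'on--Zygmund operator (see \cite[p.\,649]{NS} and the references given there). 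Applied in the scalar case, this immediately yields the assertion for $I_\gamma^{\ab}$ and $R_N^{\ab}$.

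For the nonlinear operators $\H_*^{\ab}$, $g_V^{\ab}$, $g_H^{\ab}$ and $g_{M,N}^{\ab}$, I would then pass from the associated vector-valued linear operator $T$ to the sublinear operator $f\mapsto\|Tf(\cdot)\|_{\mathbb{B}}$. By the construction recalled in Section~\ref{sec:prel}, this $\mathbb{B}$-norm is exactly the corresponding nonlinear operator evaluated at $f$; in the case of $\H_*^{\ab}$ one uses in addition that, for $f\in L^p(wd\m_{\ab})$, the function $t\mapsto\H_t^{\ab}f(\theta)$ belongs to $\mathbb{X}$, so that its $\mathbb{X}$-norm coincides with its $L^\infty(dt)$-norm. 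Since for any weight $w$ one has $\big\|\,\|Tf(\cdot)\|_{\mathbb{B}}\,\big\|_{L^p(wd\m_{\ab})}=\|Tf\|_{L^p_{\mathbb{B}}(wd\m_{\ab})}$, the weighted $L^p$ bounds and the weighted weak-type $(1,1)$ bound for $T$ transfer at once to the nonlinear operator, finishing the proof.

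I do not expect a genuine obstacle here: the argument is a straightforward combination of Theorem~\ref{thm:main} with standard facts. The only points deserving a little care are the verification that the space is truly of homogeneous type (the doubling property of $\m_{\ab}$, already noted), that the $A_p^{\ab}$ classes are exactly those for which the weighted theory applies, and that this theory is available in the Banach-space-valued formulation needed for the $g$-functions and the maximal operator --- all of which are classical.
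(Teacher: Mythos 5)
Your overall strategy is exactly the paper's: everything is deduced from Theorem \ref{thm:main} via the general (scalar and Banach-space-valued) Calder\'on--Zygmund theory on the space of homogeneous type $((0,\pi),d\m_{\ab},|\cdot|)$, and for the scalar operators $I_{\gamma}^{\ab}$ and $R_N^{\ab}$ your argument is complete and coincides with the paper's one-line proof.

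For the vector-valued operators there is one step you pass over too quickly. The weighted Calder\'on--Zygmund theorem produces bounds for the linear operator $T$ a priori only on $L^2(d\m_{\ab})\cap L^p(wd\m_{\ab})$ (or on a dense subspace such as $C_c^{\infty}(0,\pi)$), and then extends $T$ to all of $L^p(wd\m_{\ab})$ by continuity (resp.\ via the weak-type inequality when $p=1$). The corollary, however, asserts boundedness of the \emph{concretely defined} operators $\H_*^{\ab}f(\theta)=\|\H_t^{\ab}f(\theta)\|_{L^{\infty}(dt)}$ and $g_{M,N}^{\ab}(f)(\theta)=\|\partial_t^M\delta^N\H_t^{\ab}f(\theta)\|_{L^2(t^{2M+2N-1}dt)}$, which make sense pointwise for every $f\in L^p(wd\m_{\ab})$. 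Your phrase ``transfer at once'' hides the identification of the abstract extension with these pointwise expressions. This is where the paper does real (if routine) work: for the maximal operator it uses that each individual $\H_t^{\ab}$ is bounded on $L^p(wd\m_{\ab})$, $w\in A_p^{\ab}$, $1\le p<\infty$ --- a fact justified by \eqref{estjac} and \eqref{growthFJ} --- and then follows the scheme of the proof of \cite[Theorem 2.1]{NS}; for the square functions the analogous identification is carried out as in \cite[Theorem 2.2]{StTo2} and \cite[Corollary 2.5]{Szarek}, again with \eqref{estjac} and \eqref{growthFJ} as input. Supplying this step (take $f_j\in C_c^{\infty}(0,\pi)$ with $f_j\to f$ in $L^p(wd\m_{\ab})$, show that $\H_t^{\ab}f_j(\theta)$ converges suitably to $\H_t^{\ab}f(\theta)$, and conclude that the extension agrees a.e.\ with the pointwise operator) would make your proof complete.
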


Further consequences of Theorem \ref{thm:main} can be derived from the general theory of
Calder\'on-Zygmund operators, see for instance \cite[Section 1]{BFS} and references given there.
We leave this to interested readers.

\begin{proof}[Proof of Corollary \ref{cor:main}]
The assertions for the scalar-valued operators $I_{\gamma}^{\ab}$ and $R_N^{\ab}$ 
follow from the standard Calder\'on-Zygmund theory for spaces of homogeneous type. 

The case of the maximal operator $\H_{*}^{\ab}$ is analogous to the Laguerre heat-diffusion maximal
operator considered in \cite{NS}, see the proof of \cite[Theorem 2.1]{NS}. 
The relevant fact that each $\H_t^{\ab}$, $t>0$, is bounded on $L^p(wd\m_{\ab})$, $w\in A_p^{\ab}$,
$1\le p < \infty$, can be easily justified by means of \eqref{estjac} and \eqref{growthFJ}.

Finally, the assertions for the square functions $g_V^{\ab}$, $g_{H}^{\ab}$, 
$g_{M,N}^{\ab}$, are proved by means of \eqref{estjac} and
\eqref{growthFJ}, by the arguments used in the Hermite and Laguerre function
settings; see the proofs of \cite[Theorem 2.2]{StTo2} and \cite[Corollary 2.5]{Szarek}.
\end{proof}

We finish this section with various remarks.

\begin{rem}
It is not appropriate to replace $\delta$ by its adjoint 
$\delta^*=-\delta-(\alpha+\frac{1}{2})\cot\frac{\theta}2+(\beta+\frac{1}{2})\tan\frac{\theta}2$
in the definitions of the Riesz-Jacobi transforms and the square functions involving the
horizontal component.
Focus for instance on $g_{H}^{\ab}$. Let 
$\widetilde{g}_H^{\ab}(f)(\theta)=\|\delta^* \H_t^{\ab}f(\theta)\|_{L^2(tdt)}$ be the $g$-function
arising by replacing $\delta$ with $\delta^*$ in the definition of $g_H^{\ab}$.
A direct computation reveals that, for $\alpha+\beta> -1\slash 2$,
$$
\widetilde{g}_H^{\ab}(\P_0^{\ab})(\theta) = |\alpha+\beta+1|^{-1} \bigg|
	\Big(\alpha+\frac{1}{2}\Big) \cot\frac{\theta}2 - \Big(\beta+\frac{1}2\Big) \tan\frac{\theta}2\bigg|
		\P_0^{\ab}(\theta).
$$
Since $\P_0^{\ab}\in L^p(d\m_{\ab})$ for all $p \ge 1$ and 
$\widetilde{g}_H^{\ab}(\P_0^{\ab})\notin L^p(d\m_{\ab})$ when $p\ge \min(2\alpha+2,2\beta+2)$, we see that 
$\widetilde{g}_H^{\ab}$ is not bounded on all the spaces $L^p(d\m_{\ab})$, $1<p<\infty$.
In particular, it follows that $\widetilde{g}_H^{\ab}$ cannot be a Calder\'on-Zygmund operator.
\end{rem}

\begin{rem}
Lower $L^p$ estimates, $1<p<\infty$, for the Jacobi vertical square functions can be deduced 
from Corollary \ref{cor:main} and a standard duality argument, see \cite[p.\,66--67]{MuS}.
This can be generalized to the weighted setting with $A_p^{\ab}$ weights admitted, see
\cite[Remark 2.6]{Szarek}.
\end{rem}

\begin{rem}
In connection with the critical case $\alpha+\beta=-1$ excluded in Theorem \ref{thm:main}, we note that for
$\alpha=\beta = -1\slash 2$ the operator $I_{\gamma}^{\ab}\Pi_0$ can be treated like $I_{\gamma}^{\ab}$, 
$\alpha+\beta \neq -1$, and proved to be a Calder\'on-Zygmund operator associated with the kernel
$$
\widetilde{K}_{\gamma}^{-1\slash 2,-1\slash 2}(\theta,\varphi) 
	=  \frac{-1}{\Gamma(1+2i\gamma)} \int_0^{\infty}
	\partial_t H_t^{-1\slash 2,-1\slash 2}(\theta,\varphi)
	 t^{2i\gamma}dt, 	\qquad \gamma \in \R,\quad \gamma \neq 0.
$$
Details are left to interested readers.
\end{rem}

Finally, we relate our results to those in the earlier papers mentioned in the introduction and
concerning the Jacobi setting.

In the article \cite{Bur2}, the authors consider the ultraspherical setting with the
type parameter $\lambda > 0$, which coincides with our Jacobi setting with
$\alpha=\beta=\lambda-1\slash 2>-1\slash 2$. In the main result, they prove that the corresponding
Riesz transforms of arbitrary order are Calder\'on-Zygmund operators in the sense of the
associated space of homogeneous type. Moreover, they show that certain square functions,
see \cite[(1.4),(1.5)]{Bur2}, can be viewed as vector-valued Calder\'on-Zygmund operators.
Our present results extend those from \cite{Bur2} in several directions.
First of all, we consider the Jacobi setting with arbitrary $\alpha,\beta \ge -1\slash 2$;
in particular, the case $\alpha=\beta=-1\slash 2$ is included. Secondly, we deal with an essentially
wider variety of operators, including imaginary powers, the Poisson semigroup maximal operator and 
mixed square functions. For $\alpha=\beta$ the Riesz-Jacobi
transforms $R_N^{\ab}$ coincide with the Riesz operators from \cite{Bur2}, the vertical $g$-function
$g_V^{\ab}$ coincides with the $g$-function in \cite[(1.4)]{Bur2}, and the horizontal $g$-function
$g_H^{\ab}$ dominates that in \cite[(1.5)]{Bur2}. 
The technique for proving standard estimates developed in this paper is different
even in the ultraspherical setting and seems more transparent. 
In addition, we give a shorter proof of the $L^2$-boundedness of the Riesz-Jacobi transforms.

The first-order Riesz transform related to ultraspherical expansions of type $\lambda>0$
was investigated earlier, by different methods, in \cite{Bur1}, and, among other results,
the $L^p$-boundedness, $1<p<\infty$, and weak type $(1,1)$ were obtained in the unweighted context.

Considering the fundamental paper \cite{MuS}, we already mentioned that our definitions 
of operators, as well as those in \cite{Bur1,Bur2}, are ``spectral'' and differ from those in \cite{MuS}.
However, they are related and this allows one to move certain results in both directions.
For instance the ultraspherical Riesz transform of order $1$ and thus also our Riesz-Jacobi transform
are related to the conjugate function mapping from \cite{MuS} by a well-behaved multiplier operator,
see \cite[Section 6]{Bur1}. Further, the $g$-function studied in \cite{MuS} can be treated, at least 
partially, by means of the Calder\'on-Zygmund theory and the technique of kernel estimates presented in
Section \ref{sec:ker}, see \cite[Section 4.3]{Bur2}. 
The $L^p$ and weak-type $(1,1)$ boundedness of the Poisson integral maximal operator proved in
\cite{MuS} can also be obtained from our result about the maximal operator $\H^{\ab}_{*}$.

Some results of \cite{MuS} were generalized to the Jacobi setting in \cite{Li}, in particular
the $L^p$ mapping properties of the conjugate function mapping. The proof in \cite{Li}
is based on deep estimates of the transplantation kernel for Jacobi orthonormalized polynomials
obtained by Muckenhoupt \cite{Mu}. In fact the result on the conjugate function mapping in \cite{Li}
(and thus also in \cite{MuS}) is a direct consequence of Muckenhoupt's transplantation theorem 
\cite{Mu}, see \cite[Section 5]{Stem}.

We mention that conjugacy problems in other Jacobi settings were investigated earlier by Stempak
\cite{StemT} in the ultraspherical case and by the authors \cite{NoSj}.
Recently, Betancor et al. \cite{BFRT} complemented the results of \cite{Bur2} by deriving 
principal-value integral representations for the ultraspherical Riesz transforms of higher orders.

The imaginary powers of the Jacobi operator can be viewed as spectral multipliers related to $\J^{\ab}$. 
Consequently, some special cases of our main result on $I_{\gamma}^{\ab}$ are
covered by multiplier theorems existing in the literature. In particular,
$I_{\gamma}^{\ab}$ is a multiplier of Laplace transform type in the
sense of Stein \cite{topics} and hence, in the ultraspherical case, its unweighted
$L^p$-boundedness and weak type $(1,1)$ follow from the result of Mart\'{\i}nez,
see \cite[Theorem 1.1]{Ma}. For an account of other multiplier theorems in the ultraspherical
and Jacobi settings, we refer to \cite[Section 1]{Ma}. Finally, we observe that
with only slightly more effort, our methods are sufficient for proving, 
via the Calder\'on-Zygmund theory, a weighted multiplier theorem for Jacobi expansions in the spirit 
of Stein's general multiplier theorem for contraction semigroups \cite[Corollary 3, p.\,121]{topics}.
We leave the details to interested readers.

\section{$L^2$-boundedness and kernel associations} \label{sec:L2}

In this section we show that the operators that we are dealing with are indeed well defined
and bounded on $L^2(d\m_{\ab})$.
Then we identify the kernels which these operators are associated with
in the Calder\'on-Zygmund theory sense.

\begin{proof}[Proof of Proposition \ref{prop:L2}; the cases of $I_{\gamma}^{\ab}$ and $\H_{*}^{\ab}$]
The Plancherel theorem shows that the imaginary powers $I_{\gamma}^{\ab}$ are well-defined
isometries on $L^2(d\m_{\ab})$, except when $\alpha=\beta=-1/2$. In the latter case, the
$I_{\gamma}^{\ab}\Pi_0$ are isometries on $\{\P_0^{\ab}\}^{\perp}$ and contractions on $L^2(d\m_{\ab})$.

Next, we observe that the $L^2$-boundedness of the maximal operator $\H_*^{\ab}$ is a consequence of the
analogous property for the Jacobi-Poisson maximal operator 
$S_*^{\ab}$ in the standard Jacobi polynomial setting, see \cite[p.\,346]{NoSj}. 
This is because $\H_*^{\ab}$ can be controlled pointwise by that maximal operator.
Indeed, letting $\{T_t^{\ab}\}$ be the one-dimensional Jacobi semigroup in the setting of \cite{NoSj}
and $\{\widetilde{T}_t^{\ab}\}$ be the semigroup generated by $\J^{\ab}$, we have
$$
\widetilde{T}_t^{\ab} (f\circ \cos) (\theta) = e^{-t(\frac{\alpha+\beta+1}{2})^2} T_t^{\ab}f(\cos\theta),
	\qquad \theta \in (0,\pi),
$$
for suitable functions $f$ on $(-1,1)$. Then the subordination principle implies 
$|\H_*^{\ab}(f\circ \cos)(\theta)| \le S_*^{\ab}|f|(\cos\theta)$, and the conclusion follows.
\end{proof}

The treatment of the Riesz transforms is less straightforward. It is not even clear whether the defining
series converges in $L^2(d\m_{\ab})$, and this is because $\{\delta^N \P_n^{\ab}:n=0,1,2,\ldots\}$ does not
form an orthogonal system unless $N=1$.
To overcome this obstacle, we will decompose $\delta^N \P_n^{\ab}$ into a suitable sum involving
other orthogonal systems, see \cite[Section 4]{NoSt} for a general background.
For this purpose, the following formula is crucial (cf. \cite[(7.27)]{NoSt})
\begin{align} \nonumber
A \cos\theta \, \P_{n-1}^{\alpha+1,\beta+1}(\theta) & 
	= B \Big( \sin\frac{\theta}2 \cos\frac{\theta}2\Big)^2
	\P_{n-2}^{\alpha+2,\beta+2}(\theta) 
	+ C \Big(\sin\frac{\theta}2\Big)^2 \P_{n-1}^{\alpha+2,\beta}(\theta) \\
	& \quad + D \Big(\cos\frac{\theta}2\Big)^2 \P_{n-1}^{\alpha,\beta+2}(\theta) 
	+ E \,\P_n^{\ab}(\theta), \qquad n \ge 1,\label{ffor}
\end{align}
with the coefficients
\begin{align*}
A & = (\alpha+1)(\beta+1)(\alpha+\beta+2n),\\ 
B & = \sqrt{(n-1)(n+\alpha+\beta+2)} \big( (n-1)(\alpha+\beta+2) + (\alpha+1)^2+(\beta+1)^2 \big),\\
C & = (\beta-\alpha) \sqrt{(n+\beta)(n+\alpha+1)} (n+\alpha),\\
D & = (\beta-\alpha) \sqrt{(n+\alpha)(n+\beta+1)} (n+\beta),\\
E & = \sqrt{n(n+\alpha+\beta+1)} \big( (n-1)(\alpha+\beta+2) + 2(\alpha+1)(\beta+1) \big).
\end{align*}
Notice that $A \simeq n$, and that $B=\mathcal{O}(n^2)$, $C=\mathcal{O}(n^2)$,
$D=\mathcal{O}(n^2)$ and $E=\mathcal{O}(n^2)$ as $n \to \infty$. 
\begin{lem} \label{decomp}
Let $N \ge 1$. Then we have the decomposition
\begin{equation} \label{dec}
\delta^N \P_n^{\ab}(\theta) = \sum_{0\le \nu,\eta,p \le 2N} \mathcal{O}(n^N)
	\Big(\sin\frac{\theta}2\Big)^{\nu} \Big(\cos\frac{\theta}2\Big)^{\eta}
	\P_{n-p}^{\alpha+\nu,\beta+\eta}(\theta).
\end{equation}
\end{lem}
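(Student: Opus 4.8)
The plan is to prove \eqref{dec} by induction on $N$, using formula \eqref{ffor} together with the differentiation rule \eqref{jacdiff} and elementary trigonometric identities as the two engines that drive the recursion. The base case $N=1$ is essentially \eqref{jacdiff} itself: it gives $\delta\P_n^{\ab}(\theta)=-\tfrac12\sqrt{n(n+\alpha+\beta+1)}\sin\theta\,\P_{n-1}^{\alpha+1,\beta+1}(\theta)$, and writing $\sin\theta=2\sin\tfrac\theta2\cos\tfrac\theta2$ exhibits this as a single term of the claimed form with $\nu=\eta=1$, $p=1$, and coefficient $\mathcal{O}(n)$ (indeed $\simeq n$), which fits into the sum over $0\le\nu,\eta,p\le 2$.

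For the inductive step, assume \eqref{dec} holds for some $N\ge 1$ and apply $\delta=d/d\theta$ to each term on the right-hand side. By the product rule, $\delta$ falls either on one of the factors $(\sin\tfrac\theta2)^\nu(\cos\tfrac\theta2)^\eta$ or on $\P_{n-p}^{\alpha+\nu,\beta+\eta}(\theta)$. In the first case one picks up a factor $\tfrac12\cos\tfrac\theta2$ or $-\tfrac12\sin\tfrac\theta2$, which merely shifts the trigonometric exponents $(\nu,\eta)$ by one and keeps the polynomial factor and the $\mathcal{O}(n^N)$ coefficient untouched — this clearly stays within the allowed ranges $0\le\nu,\eta\le 2N+2=2(N+1)$. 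In the second case, \eqref{jacdiff} applied with parameters $(\alpha+\nu,\beta+\eta)$ and index $n-p$ produces $-\tfrac12\sqrt{(n-p)(n-p+\alpha+\nu+\beta+\eta+1)}\,\sin\theta\,\P_{n-p-1}^{\alpha+\nu+1,\beta+\eta+1}(\theta)$; the square root is $\mathcal{O}(n)$, so multiplied by the inherited $\mathcal{O}(n^N)$ coefficient it becomes $\mathcal{O}(n^{N+1})$, the factor $\sin\theta=2\sin\tfrac\theta2\cos\tfrac\theta2$ raises both trigonometric exponents by one, and the polynomial index drops by one more, so $p$ increases by one and both parameters increase by one. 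In all cases the new term has the form $\mathcal{O}(n^{N+1})(\sin\tfrac\theta2)^{\nu'}(\cos\tfrac\theta2)^{\eta'}\P_{n-p'}^{\alpha+\nu',\beta+\eta'}(\theta)$ with $0\le\nu',\eta',p'\le 2N+2$, and summing over the finitely many resulting terms gives \eqref{dec} with $N$ replaced by $N+1$.

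The role of \eqref{ffor} is to control the growth of the parameters: iterating \eqref{jacdiff} alone would push the upper parameter $\alpha+\nu$ up by one at every differentiation, but it would also force the lower index $n-p$ down by one each time, so after $N$ steps one would only have $\nu,\eta\le N$ and $p\le N$, which is already within $2N$ — so in fact the naive iteration of \eqref{jacdiff} suffices for the stated ranges, and \eqref{ffor} is not strictly needed here. (It will, however, be needed in the companion orthogonal-decomposition argument sketched just before the lemma, where one must re-expand $\cos\theta\,\P_{n-1}^{\alpha+1,\beta+1}$ in terms of genuinely orthogonal systems; the generous bound $2N$ in \eqref{dec} leaves room to absorb those re-expansions.) The only genuinely delicate point is bookkeeping: one must check that every time a factor $\cos\theta$ or $\sin\theta$ is converted to $2\sin\tfrac\theta2\cos\tfrac\theta2$ (or, if one prefers to keep things tidy, that half-angle identities such as $\cos\theta=1-2\sin^2\tfrac\theta2=2\cos^2\tfrac\theta2-1$ are used consistently), the resulting exponents never exceed $2N$ and the convention $\P_k^{\alpha,\beta}\equiv 0$ for $k<0$ handles the terms where $n-p<0$. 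I expect this purely clerical verification of the exponent ranges to be the main — and essentially the only — obstacle, and it is routine.
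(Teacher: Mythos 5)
Your induction breaks down precisely at the point you dismiss as ``purely clerical''. In the statement \eqref{dec} the index $\nu$ plays a double role: it is simultaneously the exponent of $\sin\frac{\theta}2$ and the shift in the first Jacobi parameter, and likewise $\eta$ for $\cos\frac{\theta}2$ and $\beta$; this coupling is not decorative, it is exactly what makes each summand a member of one of the orthonormal systems \eqref{adorth}, which is the whole point of the lemma (it is how the $L^2$-boundedness of $R_N^{\ab}$ is then deduced). Your inductive step preserves this coupling only when $\delta$ falls on the Jacobi polynomial: there \eqref{jacdiff} raises both parameters by one and $\sin\theta=2\sin\frac{\theta}2\cos\frac{\theta}2$ raises both exponents by one, so the term stays matched. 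But when $\delta$ falls on the prefactor $(\sin\frac{\theta}2)^{\nu}(\cos\frac{\theta}2)^{\eta}$, you produce terms like $(\sin\frac{\theta}2)^{\nu-1}(\cos\frac{\theta}2)^{\eta+1}\P_{n-p}^{\alpha+\nu,\beta+\eta}$, whose trigonometric exponents no longer match the parameter shifts; such a term is not of the form appearing in \eqref{dec}, so it cannot be absorbed into the induction hypothesis, and your argument gives at best a decoupled variant of the lemma that does not feed into the orthogonality argument. The failure is already visible at $N=2$: differentiating the single term of the base case, the prefactor contributes $\mathcal{O}(n)\cos\theta\,\P_{n-1}^{\alpha+1,\beta+1}$, and $\cos\theta=\cos^2\frac{\theta}2-\sin^2\frac{\theta}2$ cannot be matched to the parameters $(\alpha+1,\beta+1)$ by any rewriting of the trigonometric factor alone.

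This is exactly where \eqref{ffor} is indispensable, so your remark that it ``is not strictly needed here'' is backwards: \eqref{ffor} is the connection formula that re-expands $\cos\theta\,\P_{n-1}^{\alpha+1,\beta+1}$ as a sum of \emph{matched} products $(\sin\frac{\theta}2)^{2s_1}(\cos\frac{\theta}2)^{2s_2}\P^{\alpha+2s_1,\beta+2s_2}_{\cdot}$ (relative to the base parameters), with $\mathcal{O}(n)$ coefficients after dividing by $A\simeq n$. The paper's proof first applies Fa\`a di Bruno's formula to \eqref{jacdiff} to write $\delta^N\P_n^{\ab}$ as a combination of $(\sin\theta)^m(\cos\theta)^{r-m}\P_{n-r}^{\alpha+r,\beta+r}$, and then iterates \eqref{ffor} $r-m$ times to dispose of the unmatched factor $(\cos\theta)^{r-m}$; the range $2N$ in \eqref{dec} is generous precisely to accommodate the parameter jumps of size $2$ that \eqref{ffor} introduces. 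Your induction could in principle be repaired by invoking \eqref{ffor} (or an equivalent three-parameter connection identity) every time the derivative hits the prefactor, but as written the key idea is missing.
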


Here and in the sequel, we write expressions like $\mathcal{O}(n^N)$ for factors which are independent
of $\theta$ and which are bounded in modulus by $C n^N$ with $C$ independent of $n$,
and also independent of $M$ in connection with the operators $g_{M,N}^{\ab}$.

\begin{proof}[Proof of Lemma \ref{decomp}]
First we claim that
\begin{equation} \label{c1}
\delta^N \P_n^{\ab}(\theta) = \sum_{\substack{1\le r \le N\\0 \le m \le r}}
	\mathcal{O}(n^{N-r+m}) (\sin\theta)^m (\cos\theta)^{r-m}
	\P_{n-r}^{\alpha+r,\beta+r}(\theta).
\end{equation}
To verify this we will use Fa\`a di Bruno's formula for the $N$th derivative of the composition
of two functions (see \cite{Jo} for the related references and interesting historical remarks),
\begin{equation} \label{Faa}
\partial_{\theta}^N(g\circ f)(\theta) = \sum \frac{N!}{k_1! \cdots k_N!} \;\partial^{k_1+\ldots+k_N}
	g\circ f(\theta) \bigg( \frac{\partial^1 f(\theta)}{1!}\bigg)^{k_1}\cdots
	\bigg( \frac{\partial^N f(\theta)}{N!}\bigg)^{k_N},
\end{equation}
where the summation runs over all $k_1,\ldots,k_N \ge 0$ such that $k_1+2k_2+\ldots+N k_N = N$.
Choosing $f(\theta) = \cos\theta$ and $g(x)=g_n^{\ab}(x)=\P_n^{\ab}(\arccos x)$, and using the fact that
$$
\partial_x g(x) = -\frac{1}2 \sqrt{n(n+\alpha+\beta+1)}\P_{n-1}^{\alpha+1,\beta+1}(\arccos x)
	= \mathcal{O}(n) g_{n-1}^{\alpha+1,\beta+1}(x),
$$
which follows from the differentiation rule \eqref{jacdiff}, we see that
$$
\delta^N \P_n^{\ab}(\theta) = \sum_{k_1+2k_2+\ldots+Nk_N=N} \mathcal{O}(n^{|k|})
	(\sin\theta)^{\sum_{\textrm{odd}\, i \le N}k_i} (\cos\theta)^{\sum_{\textrm{even}\, i \le N}k_i}
	\P_{n-|k|}^{\alpha+|k|,\beta+|k|}(\theta).
$$
Since the powers of $\sin\theta$ and $\cos\theta$ sum to $|k|$ and the constraint 
$k_1+2k_2+\ldots+Nk_N=N$ implies $|k|\le N-\sum_{\textrm{even}\, i \le N}k_i$,
the claim follows if we let $|k|=r$ and $\sum_{\textrm{odd}\, i \le N}k_i=m$.

Next, we claim that for $r-m \ge 1$
\begin{align} \nonumber 
&(\cos\theta)^{r-m} \P_{n-r}^{\alpha+r,\beta+r}(\theta) \\ &= \sum_{s_1,s_2\in\{0,1\}^{r-m}}
	\mathcal{O}(n^{r-m}) \Big( \sin\frac{\theta}2\Big)^{2|s_1|}\Big(\cos\frac{\theta}2\Big)^{2|s_2|}
	\P_{n-m-|s_1|-|s_2|}^{\alpha+m+2|s_1|,\beta+m+2|s_2|}(\theta), \label{c2}
\end{align}
where $|s_1|,|s_2|$ denote the lengths of the multi-indices $s_1,s_2$. Indeed, by \eqref{ffor} we get
$$
\cos\theta \, \P_{n-r}^{\alpha+r,\beta+r}(\theta) = \sum_{s_1,s_2\in\{0,1\}}
	\mathcal{O}(n) \Big( \sin\frac{\theta}2\Big)^{2 s_1}\Big(\cos\frac{\theta}2\Big)^{2 s_2}
	\P_{n-r+1-s_1-s_2}^{\alpha+r-1+2s_1,\beta+r-1+2s_2}(\theta)
$$
and iterating this we arrive precisely at \eqref{c2}.

A combination of \eqref{c1} and \eqref{c2}, and the fact that 
$\sin \theta = 2\sin \frac{\theta}2 \cos \frac{\theta}2$, reveal that
$$
\delta^N \P_n^{\ab}(\theta) = \sum \mathcal{O}(n^N) 
	\Big( \sin\frac{\theta}2\Big)^{m+2|s_1|}\Big(\cos\frac{\theta}2\Big)^{m+2|s_2|}
	\P_{n-m-|s_1|-|s_2|}^{\alpha+m+2|s_1|,\beta+m+2|s_2|}(\theta),
$$
where the summation runs over $0 \le m \le N$ and $s_1,s_2 \in \{0,1\}^{N-m}$, possibly with some 
terms vanishing. This implies the assertion of the lemma.
\end{proof}

We note that \eqref{dec} could be improved, since some of the terms vanish.
\begin{proof}[Proof of Proposition \ref{prop:L2}; the case of $R_N^{\ab}$]
Using Lemma \ref{decomp} and taking into account the fact that each of the systems
\begin{equation} \label{adorth}
\Big\{ \Big(\sin\frac{\theta}2\Big)^{\nu} \Big(\cos\frac{\theta}2\Big)^{\eta}
	\P_n^{\alpha+\nu,\beta+\eta}(\theta) : n=0,1,2,\ldots \Big\}, \qquad \eta, \nu \ge 0,
\end{equation}
is orthonormal in $L^2(d\m_{\ab})$, we infer (see \cite[Proposition 3]{NoSt}) that the operators 
$R_N^{\ab}$ are well defined and bounded on $L^2(d\m_{\ab})$.
\end{proof}

We proceed to square functions. 
\begin{proof}[Proof of Proposition \ref{prop:L2}; the cases of $g_V^{\ab}$, $g_H^{\ab}$ and $g_{M,N}^{\ab}$]
It is enough to verify the boundedness in $L^2(d\m_{\ab})$ of
$$
g_{M,N}^{\ab}(f)(\theta) = \big\|\partial_t^M \delta^N \H_t^{\ab}f(\theta)\big\|_{L^2(t^{2M+2N-1}dt)}
$$
with any $M,N=0,1,2,\ldots$ such that $M+N>0$. 

By differentiating the series defining $\H_t^{\ab}f$ in \eqref{Hser},  
we get for $f \in L^2(d\m_{\ab})$
$$
\partial_t^M \delta^N \H_t^{\ab}f = \sum_{n=0}^{\infty} (-1)^M \Big|n+\frac{\alpha+\beta+1}2\Big|^M
	e^{-t|n+\frac{\alpha+\beta+1}2|} \langle f,\P_n^{\ab} \rangle_{d\m_{\ab}} \delta^N \P_n^{\ab},
$$
which in view of Lemma \ref{decomp} gives
\begin{align*}
&\partial_t^M \delta^N \H_t^{\ab}f(\theta) \\ & = \sum_{0\le \nu,\eta,p\le 2N}
\sum_{n=0}^{\infty} \mathcal{O}(n^{M+N})
	e^{-t|n+\frac{\alpha+\beta+1}2|} \langle f,\P_n^{\ab} \rangle_{d\m_{\ab}} 
	\Big(\sin\frac{\theta}2\Big)^{\nu} \Big(\cos\frac{\theta}2\Big)^{\eta}
	\P_{n-p}^{\alpha+\nu,\beta+\eta}(\theta).
\end{align*}
In the exceptional case $\alpha+\beta+1=0$, there is no term with $n=0$ in these and the next few sums.
Now the orthonormality of the systems \eqref{adorth} leads to
\begin{align*}
\big\| g_{M,N}^{\ab}(f)\big\|^2_{L^2(d\m_{\ab})} & = \int_0^{\pi}\int_0^{\infty}
	\big| \partial_t^M\delta^N \H_t^{\ab}f(\theta)\big|^2 t^{2M+2N-1} dtd\m_{\ab}(\theta) \\
& \lesssim \int_0^{\infty} \sum_{n=0}^{\infty} n^{2M+2N} 
	e^{-2t|n+\frac{\alpha+\beta+1}2|} t^{2M+2N-1}|\langle f,\P_n^{\ab} \rangle_{d\m_{\ab}}|^2 dt \\
& = \Gamma(2M+2N)\sum_{n=0}^{\infty}  \frac{n^{2M+2N}}{(2n+\alpha+\beta+1)^{2M+2N}}
	|\langle f,\P_n^{\ab} \rangle_{d\m_{\ab}}|^2 \\
& \lesssim \|f\|^2_{L^2(d\m_{\ab})}.
\end{align*}

Finally, to prove the claimed isometry property of $g_{M,0}^{\ab}$, notice that 
$$
\partial_t^M \H_t^{\ab}f = \sum_{n=0}^{\infty} (-1)^M \Big|n+\frac{\alpha+\beta+1}2\Big|^M
	e^{-t|n+\frac{\alpha+\beta+1}2|} \langle f,\P_n^{\ab} \rangle_{d\m_{\ab}} \P_n^{\ab}.
$$
Then Parseval's theorem shows that, for $f \in L^2(d\m_{\ab})$,
\begin{align*}
\big\| g_{M,0}^{\ab}(f)\big\|^2_{L^2(d\m_{\ab})} & =
\sum_{n=0}^{\infty} \Big|n+\frac{\alpha+\beta+1}2\Big|^{2M} 
	|\langle f,\P_n^{\ab} \rangle_{d\m_{\ab}}|^2
	\int_0^{\infty} e^{-2t|n+\frac{\alpha+\beta+1}2|} t^{2M-1}dt  \\
& = 2^{-2M}\Gamma(2M) \|f\|^2_{L^2(d\m_{\ab})};
\end{align*}
when $\alpha+\beta+1=0$ the last occurrence of $f$ must be replaced by $\Pi_0 f$.
\end{proof}

We pass to kernel associations.
\begin{proof}[Proof of Proposition \ref{prop:assoc}]
The arguments we shall give go essentially as follows.
If $T$ is one of the scalar-valued operators and $K(\theta,\varphi)$ is a candidate for an associated
kernel, then for density reasons it is enough to verify that
$$
\langle Tf,g\rangle_{d\m_{\ab}} = \iint_{(0,\pi)^2} K(\theta,\varphi) f(\varphi) 
	\overline{g(\theta)} \, d\m_{\ab}(\varphi)d\m_{\ab}(\theta)
$$
for all $f,g \in C_c^{\infty}(0,\pi)$ with disjoint supports.
The definition of $T$ in $L^2(d\m_{\ab})$ by means of the spectral series together with
Parseval's identity allows us to write the left-hand side here as a series involving the Fourier-Jacobi
coefficients of $f$ and $g$ and in some cases also the auxiliary Jacobi systems \eqref{adorth}. 
It is sufficient to check that this series coincides with the right-hand side. 
This is clear on the formal level, after expressing the kernel $K(\theta,\varphi)$
in terms of a series involving products $\P_n^{\ab}(\theta)\P_n^{\ab}(\varphi)$ and then changing
orders of summation, integration and possibly differentiation. 
However, ensuring that these order changes are indeed legitimate is a delicate matter since  
the kernel has a non-integrable singularity. To perform this task, one has to use the fact that the
supports of $f$ and $g$ are disjoint, in order to avoid the singularity. As usual in similar situations,
this is combined with, among other things, estimates of expressions related to
the kernel and some information on the growth of the eigenfunctions $\P_n^{\ab}$ as $n \to \infty$.
The case of a vector-valued $T$ is in principle similar, only the technicalities are a bit more complex.
If $K(\theta,\varphi)$ is now a candidate for a $\mathbb{B}$-valued kernel associated with 
one of our square function operators, say $T$, then the task is easily reduced to verifying that
$$
\langle Tf,h\rangle = 
	\bigg\langle \int_{(0,\pi)} K(\cdot,\varphi) f(\varphi) \, d\m_{\ab}(\varphi), h \bigg\rangle
$$
for each fixed $f \in C_c^{\infty}(0,\pi)$ and a set of $h$ that spans a dense subspace of
$L^2_{\mathbb{B}^*}((\support f)^c,d\m_{\ab})$. Here the dual $\mathbb{B}^*$ is identified with
$\mathbb{B}$. The pairing above is understood in the sense of $L^2_{\mathbb{B}}((\support f)^c,d\m_{\ab})$
and its dual, which happens to be the same space in view of self-duality of $\mathbb{B}$.
From here, roughly speaking, one proceeds with manipulations, see \cite{StTo2,Szarek},
in the spirit described above for the scalar-valued case.
The case of the maximal operator is even easier, since then it is enough to test the identity
\eqref{id_CZ}
for $f \in C_c^{\infty}(0,\pi)$ and only by pairing with point measures 
$\delta_{t_0}\in \mathbb{B}^{*}$ at $t_0>0$.

All the relevant arguments needed to prove Proposition \ref{prop:assoc}
were given in detail elsewhere in the settings of Hermite and Laguerre
function expansions, see \cite{StTo,StTo2,StTo3,NS,Szarek}. 
Since the reasoning in the Jacobi setting is completely analogous, we only indicate what ingredients
specific to the present context are necessary to make the proofs go through.

To treat the imaginary powers $I_{\gamma}^{\ab}$, we proceed as in the proof of 
\cite[Proposition 4.2]{StTo3}, with the aid of \eqref{estjac}; notice that \eqref{estjac} 
implies immediately \eqref{growthFJ} specified to $p=1$ and $w\equiv 1$. 
The argument starts with an integration by parts in \eqref{imag_ker}, and to see that there will be 
no integrated term, one needs to know that the Jacobi-Poisson kernel has limit $0$ as either $t\to 0$ 
or $t\to\infty$. This, however, follows from Proposition \ref{prop:rep_H} below;
the decay at infinity is also visible in \eqref{PJser}.
The relevant estimate for functions $f,g\in C_c^{\infty}(0,\pi)$ with disjoint supports
$$
\iint_{(0,\pi)^2} \int_0^{\infty} \big|\partial_t H_t^{\ab}(\theta,\varphi)\big|\,dt \, 
	\big|\overline{g(\theta)}f(\varphi)|\, d\m_{\ab}(\theta) d\m_{\ab}(\varphi) < \infty
$$
holds because, for given compact and disjoint sets $E,F\subset (0,\pi)$, we have
$$
\int_0^{\infty} \big| \partial_t H_t^{\ab}(\theta,\varphi)\big|\, dt \lesssim 1, \qquad
	\theta \in E, \quad \varphi \in F.
$$
The last bound, in turn, can be easily justified by means of the technique developed in
Section~\ref{sec:ker}, see the proof of the growth condition in the case of $g_V^{\ab}$.

Considering the Riesz-Jacobi transforms $R_N^{\ab}$, we copy with appropriate adjustments the
reasoning from the proofs of \cite[Propositions 3.3 and 3.7]{NS}. The relevant ingredients
are the orthogonal decomposition of $\delta^N\P_n^{\ab}$ stated in Lemma \ref{decomp}, the estimate
\eqref{estjac} and a strengthened version of the growth condition for $R_N^{\ab}(\theta,\varphi)$,
$$
\int_0^{\infty} \big|\partial_{\theta}^N H_t^{\ab}(\theta,\varphi)\big| t^{N-1}\, dt \lesssim
	\frac{1}{\m_{\ab}(B(\theta,|\theta-\varphi|))}, \qquad \theta,\varphi \in (0,\pi).
$$
The last bound is proved implicitly in Section \ref{sec:ker}; see the proof of the growth condition
for the kernel associated to $R_N^{\ab}$.

To deal with the maximal operator $\H_{*}^{\ab}$, we first ensure that the vector-valued linear operator
$\mathcal{H}^{\ab}$ defined on $L^2(d\m_{\ab})$ and assigning to an $f\in L^2(d\m_{\ab})$ the function
$\H^{\ab}f$ whose value at $\theta \in (0,\pi)$ is
$$
\mathcal{H}^{\ab}f(\theta) = \{\H_t^{\ab}f(\theta)\}_{t>0},
$$
has indeed its values in the Bochner-Lebesgue space $L^2_{\mathbb{X}}(d\m_{\ab})$.
This, however, follows as in the proof of \cite[Theorem 2.1]{NS}, since $\H_t^{\ab}f(\theta)$
is continuous in $(t,\theta)\in (0,\infty)\times (0,\pi)$ for $f\in L^2(d\m_{\ab})$, and the
scalar-valued maximal operator $\H_*^{\ab}$ is bounded on $L^2(d\m_{\ab})$, see Proposition
\ref{prop:L2}. Indeed, given $f\in L^2(d\m_{\ab})$, the boundedness of the maximal
operator together with the completeness of $\{\P_n^{\ab}:n\ge 0\}$ in $L^2(d\m_{\ab})$
implies by standard arguments the existence of the limit $\lim_{t\to 0^+}\H_t^{\ab}f(\theta)$
for a.a. $\theta \in (0,\pi)$. The existence of $\lim_{t\to\infty}\H_t^{\ab}f(\theta)$ is more elementary
since by \eqref{estjac}
$$
|\H_t^{\ab}f(\theta)| \lesssim \bigg| \sum_{n\ge 0} e^{-t|n+\frac{\alpha+\beta+1}2|}
	\langle f, \P_n^{\ab}\rangle_{d\m_{\ab}} \P_n^{\ab}(\theta) \bigg|
	\lesssim \sum_{n\ge 0} e^{-t|n+\frac{\alpha+\beta+1}2|} (n+1)^{2\alpha+2\beta+5},
$$
which justifies the case when $\alpha+\beta+1 \neq 0$. When $\alpha+\beta+1=0$, we write
$$
\bigg|\H_t^{\ab}f(\theta)-\frac{1}{\m_{\ab}((0,\pi))}\int_{(0,\pi)}f(\theta)\, d\m_{\ab}(\theta)\bigg|
 \lesssim \sum_{n\ge 1} e^{-t|n+\frac{\alpha+\beta+1}2|} (n+1)^{2\alpha+2\beta+5}.
$$
From this point we continue using the arguments from the proof of \cite[Theorem 2.1]{NS} combined with 
the growth condition for $\{H_t^{\ab}(\theta,\varphi)\}_{t>0}$ proved in Section \ref{sec:ker}.

Finally, the treatment of the square functions $g_{M,N}^{\ab}$, $M,N\ge 0$, $M+N>0$, relies on repeating,
with suitable modifications, the arguments from the proof of \cite[Proposition 2.3]{Szarek}.
Here the important ingredients are: the estimate \eqref{estjac}, the $L^2$-boundedness
from Proposition \ref{prop:L2} and the growth condition for the associated vector-valued kernels 
proved in Section \ref{sec:ker}. In addition, in the cases of $g$-functions with non-trivial horizontal
component, the decomposition of $\delta^N\P_n^{\ab}$ from Lemma \ref{decomp} is needed.
\end{proof}

We remark that the proof just given is based on known arguments in the Laguerre setting,
even though the Hermite setting is the basic prototype, cf. \cite{StTo,StTo2,StTo3}.
This is because the Laguerre setting is closer to the present Jacobi context, sharing phenomena
absent in the Hermite case like the presence of the type parameters and the additional orthogonal
systems emerging from the decomposition \eqref{dec}.

\section{Kernel estimates} \label{sec:ker}

This section is devoted to proving all the necessary kernel estimates.
We start by deriving a suitable representation for the Jacobi-Poisson kernel \eqref{PJser}.
This will be achieved by applying the product formula due to Dijksma and Koornwinder \cite{DK},
\begin{align*}
& P_n^{\ab}(1-2s^2)P_n^{\ab}(1-2t^2) = \frac{\Gamma(\alpha+\beta+1)\Gamma(n+\alpha+1)\Gamma(n+\beta+1)}
	{\pi n! \Gamma(n+\alpha+\beta+1)\Gamma(\alpha+1\slash 2)\Gamma(\beta+1\slash 2)} \\
& \quad \times \int_{-1}^1\int_{-1}^1 C_{2n}^{\alpha+\beta+1}\big(ust+v\sqrt{1-s^2}\sqrt{1-t^2}\big)
	(1-u^2)^{\alpha-1\slash 2}(1-v^2)^{\beta-1\slash 2} du dv,
\end{align*}
valid for $\alpha,\beta>-1\slash 2$; here $C_k^{\lambda}$ is the classical $k$th Gegenbauer
polynomial of type $\lambda$. Let $\Pi_{\alpha}$ be the probability measure on the interval $[-1,1]$
defined for $\alpha>-1\slash 2$ by
$$
d\Pi_{\alpha}(u) = \frac{\Gamma(\alpha+1)}{\sqrt{\pi}\Gamma(\alpha+1\slash 2)}
	(1-u^2)^{\alpha-1\slash 2} du.
$$
In the limit case $\alpha=-1\slash 2$, we put
$$
\Pi_{-1\slash 2} = \frac{1}{2}(\delta_{-1}+\delta_{1}).
$$
Note that $\Pi_{-1\slash 2}$ is the weak limit of $\Pi_{\alpha}$ as $\alpha \to -1\slash 2$.
Using the above product formula with $s=\sin\frac{\theta}2$ and $t=\sin\frac{\varphi}2$,
the relation between the polynomials $P_n^{\ab}$ and $\P_n^{\ab}$, and the fact that
$\m_{\ab}(0,\pi)=\Gamma(\alpha+1)\Gamma(\beta+1)\slash \Gamma(\alpha+\beta+2)$, we arrive at the identity
\begin{align*}
 \P_n^{\ab}(\theta)\P_n^{\ab}(\varphi) & = \frac{1}{\m_{\ab}(0,\pi)} 
	\frac{2n+\alpha+\beta+1}{\alpha+\beta+1} \\ &\quad \times \iint d\Pi_{\alpha}(u)d\Pi_{\beta}(v)
	\,C_{2n}^{\alpha+\beta+1}\Big( u \sin\frac{\theta}2\sin\frac{\varphi}2 + v \cos\frac{\theta}2
		\cos\frac{\varphi}2 \Big).
\end{align*}
Thus, letting $\lambda=\alpha+\beta+1$, we have in view of \eqref{PJser}
\begin{align*}
H_t^{\ab}(\theta,\varphi) & =  \frac{1}{\m_{\ab}(0,\pi)} \iint 
	d\Pi_{\alpha}(u)d\Pi_{\beta}(v) \\
& \quad \times \sum_{n=0}^{\infty} e^{-\frac{t}2(2n+\lambda)} \frac{2n+\lambda}{\lambda}
	C_{2n}^{\lambda}\Big( u \sin\frac{\theta}2\sin\frac{\varphi}2 + v \cos\frac{\theta}2
		\cos\frac{\varphi}2 \Big).
\end{align*}
To sum the last series, we use the generating function (cf. \cite[(1.27)]{A})
\begin{equation} \label{Cgen}
\sum_{n=0}^{\infty} \frac{n+\lambda}{\lambda} C_n^{\lambda}(z)r^n =
	\frac{1-r^2}{(1-2zr+r^2)^{\lambda+1}}, \qquad |r|<1, \quad \lambda>0.
\end{equation}
The fact that Gegenbauer polynomials of even orders are even functions, and those of odd orders are
odd functions, reveals that summing only over even indices in \eqref{Cgen}
will produce the even part of the right-hand side. Therefore we get
\begin{align*}
& H_t^{\ab}(\theta,\varphi) = \frac{1}{2^{\lambda+1}\m_{\ab}(0,\pi)} \sinh\frac{t}2
\iint d\Pi_{\alpha}(u)d\Pi_{\beta}(v) \\ 
&  \times \!\Bigg[ \frac{1}{(\cosh\frac{t}2-u \sin\frac{\theta}2\sin\frac{\varphi}2 - v \cos\frac{\theta}2
		\cos\frac{\varphi}2)^{\lambda+1}} + \frac{1}{(\cosh\frac{t}2+u \sin\frac{\theta}2
		\sin\frac{\varphi}2 + v \cos\frac{\theta}2 \cos\frac{\varphi}2)^{\lambda+1}}\Bigg].
\end{align*}
Taking into account the symmetry of the measures $\Pi_{\alpha}$ and $\Pi_{\beta}$,
we end up with the formula
$$
H_t^{\ab}(\theta,\varphi) = \frac{\sinh(t\slash 2)}{2^{\alpha+\beta+1}\m_{\ab}(0,\pi)} 
\iint \frac{d\Pi_{\alpha}(u)d\Pi_{\beta}(v)}
	{(\cosh\frac{t}2-u \sin\frac{\theta}2\sin\frac{\varphi}2 - v \cos\frac{\theta}2
		\cos\frac{\varphi}2)^{\alpha+\beta+2}}.
$$
By continuity arguments, this representation remains valid in the limiting cases when
$\alpha=-1\slash 2$ or $\beta=-1\slash 2$. In particular, for $\alpha=\beta=-1\slash 2$ the formula gives
$$
H_t^{-1\slash 2,-1\slash 2}(\theta,\varphi) = 
	\frac{1}{2\pi} \bigg[ \frac{\sinh t}{\cosh t - \cos(\theta-\varphi)}
	+ \frac{\sinh t}{\cosh t - \cos(\theta+\varphi)} \bigg].
$$
Here one recovers the standard Poisson kernel of the unit disc, applied to even functions
on the boundary, since the last expression equals 
$(P(re^{i\theta},e^{i\varphi})+P(re^{i\theta},e^{-i\varphi}))\slash 2$, with $r=e^{-t}$
and $P(z,w) = (2\pi)^{-1}(1-|z|^2)\slash |z-w|^2$. 

This provides a symmetric and nonnegative expression for $H_t^{\ab}(\theta,\varphi)$,
which turns out to be especially well suited to our framework. For the considerations that follow, it is
convenient to rewrite the last expression for $H_t^{\ab}$ in terms of the function
$$
q(\theta,\varphi,u,v) = 1 - u \sin\frac{\theta}2 \sin\frac{\varphi}2
	- v \cos\frac{\theta}2 \cos\frac{\varphi}2, \qquad \theta,\varphi \in (0,\pi), \quad u,v \in [-1,1].
$$
\begin{prop} \label{prop:rep_H}
The Jacobi-Poisson kernel can be written as
\begin{equation} \label{PJker}
H_t^{\ab}(\theta,\varphi) = c_{\alpha,\beta} \,\sinh\frac{t}2
	\iint \frac{d\Pi_{\alpha}(u)d\Pi_{\beta}(v)}
	{(\cosh\frac{t}2-1 + q(\theta,\varphi,u,v))^{\alpha+\beta+2}},
\end{equation}
with $c_{\alpha,\beta} =2^{-\alpha-\beta-1}\slash \m_{\ab}(0,\pi)$. \qed
\end{prop}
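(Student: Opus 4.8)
The plan is to simply assemble the computation that has already been carried out in the text preceding the statement, organising it into a clean chain of identities. First I would start from the spectral series \eqref{PJser} for $H_t^{\ab}(\theta,\varphi)$ and substitute the Dijksma--Koornwinder product formula (with $s=\sin\frac{\theta}2$, $t=\sin\frac{\varphi}2$) together with the relation $\P_n^{\ab}=c_n^{\ab}P_n^{\ab}(\cos\cdot)$ and the value $\m_{\ab}(0,\pi)=\Gamma(\alpha+1)\Gamma(\beta+1)/\Gamma(\alpha+\beta+2)$. After simplifying the $\Gamma$-factors, this expresses $\P_n^{\ab}(\theta)\P_n^{\ab}(\varphi)$ as a double integral against $d\Pi_\alpha(u)\,d\Pi_\beta(v)$ of a single Gegenbauer polynomial $C_{2n}^{\lambda}$ evaluated at $u\sin\frac\theta2\sin\frac\varphi2+v\cos\frac\theta2\cos\frac\varphi2$, where $\lambda=\alpha+\beta+1$.

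Next I would insert this into \eqref{PJser}, interchange the summation over $n$ with the double integral (justified by absolute convergence, since $|C_{2n}^\lambda|$ grows only polynomially while the factor $e^{-t(2n+\lambda)/2}$ decays exponentially), and sum the resulting series using the generating identity \eqref{Cgen}. The key observation here is that restricting \eqref{Cgen} to even indices extracts the even part of its right-hand side, which produces the two-term bracket with $(\cosh\frac t2 \mp u\sin\frac\theta2\sin\frac\varphi2 \mp v\cos\frac\theta2\cos\frac\varphi2)^{-\lambda-1}$. Using the symmetry $\Pi_\alpha(-\cdot)=\Pi_\alpha$, $\Pi_\beta(-\cdot)=\Pi_\beta$, the two terms coincide after the change of variables $(u,v)\mapsto(-u,-v)$, collapsing the bracket to a single term and yielding the closed formula with exponent $\alpha+\beta+2$. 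Finally I would rewrite $\cosh\frac t2 - u\sin\frac\theta2\sin\frac\varphi2 - v\cos\frac\theta2\cos\frac\varphi2 = \cosh\frac t2 - 1 + q(\theta,\varphi,u,v)$ by the definition of $q$, which gives exactly \eqref{PJker} with $c_{\alpha,\beta}=2^{-\alpha-\beta-1}/\m_{\ab}(0,\pi)$.

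The one genuine subtlety, and the step I expect to require the most care, is the passage to the limiting parameter values $\alpha=-1/2$ or $\beta=-1/2$: the Dijksma--Koornwinder formula and the definition of $\Pi_\alpha$ as an absolutely continuous measure are only valid for $\alpha,\beta>-1/2$, so for the endpoint one invokes the weak convergence $\Pi_\alpha\to\Pi_{-1/2}=\tfrac12(\delta_{-1}+\delta_1)$ noted in the text, together with continuity of both sides of \eqref{PJker} in the parameters $(\alpha,\beta)$ on the range $\alpha,\beta\ge-1/2$ (the integrand in \eqref{PJker} is, for fixed $t>0$ and $\theta,\varphi\in(0,\pi)$, bounded and continuous in $(u,v)$ uniformly for $\alpha,\beta$ near $-1/2$, and the series side is continuous in the parameters by \eqref{estjac}). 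This limiting argument is exactly the ``continuity arguments'' remark already made just before the proposition, so it can be stated briefly. Everything else is bookkeeping with Gamma functions and the elementary parity considerations for Gegenbauer polynomials.
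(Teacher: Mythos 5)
Your proposal is correct and follows the same route as the paper: substitute the Dijksma--Koornwinder product formula into the spectral series \eqref{PJser}, sum over even indices via the generating function \eqref{Cgen} to get the even part of its right-hand side, use the symmetry of $\Pi_\alpha$ and $\Pi_\beta$ to collapse the two resulting terms into one, and handle $\alpha=-1/2$ or $\beta=-1/2$ by the weak limit $\Pi_\alpha\to\Pi_{-1/2}$ and continuity in the parameters. This matches the paper's derivation in Section \ref{sec:ker} step for step.
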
 
For further reference, observe that
\begin{equation} \label{stpg10}
2\sin^2\frac{\theta-\varphi}4 = q(\theta,\varphi,1,1) \le q(\theta,\varphi,u,v) \le
	q(\theta,\varphi,-1,-1) = 2\cos^2\frac{\theta-\varphi}4,
\end{equation}
so the quantity $q(\theta,\varphi,u,v)$ is nonnegative 
and bounded from above by $2$. Moreover, it is strictly positive when $\theta \neq \varphi$.

The following lemma describes the measure of the interval $B(\theta,|\varphi-\theta|)$ 
and is valid for all $\alpha,\beta > -1$. 
\begin{lem} \label{ball}
For all $\theta,\varphi \in (0,\pi)$, one has 
$$
\m_{\ab}\big( B(\theta,|\varphi-\theta|)\big) \simeq |\varphi-\theta| (\theta+\varphi)^{2\alpha+1}
	(\pi-\theta+\pi-\varphi)^{2\beta+1}.
$$
\end{lem}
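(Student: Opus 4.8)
The plan is to reduce to the model weight $s^{2\alpha+1}(\pi-s)^{2\beta+1}$ and then to run an elementary case analysis. First I would record the two-sided comparisons $\sin\frac{s}2\simeq s$ and $\cos\frac{s}2\simeq\pi-s$, uniform for $s\in(0,\pi)$; they follow from $\sin x\simeq x$ on $(0,\pi/2)$ together with $\cos\frac{s}2=\sin\frac{\pi-s}2$. Consequently $d\m_{\ab}(s)\simeq s^{2\alpha+1}(\pi-s)^{2\beta+1}\,ds$ on $(0,\pi)$, with both exponents strictly greater than $-1$. Writing $r=|\varphi-\theta|\in(0,\pi)$, I would also note the trivial equivalences $\theta+\varphi\simeq\theta+r$ and $(\pi-\theta)+(\pi-\varphi)\simeq(\pi-\theta)+r$ (each of $\theta,\varphi$ is dominated by $\theta+\varphi$ while $r\le\theta+\varphi$, and symmetrically near $\pi$). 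Hence the asserted estimate is equivalent to
$$
\m_{\ab}\big(B(\theta,r)\big)\simeq r\,(\theta+r)^{2\alpha+1}\,\big((\pi-\theta)+r\big)^{2\beta+1},\qquad r=|\varphi-\theta|.
$$

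The one-variable building block is the estimate $\int_{(\theta-r,\theta+r)\cap(0,\infty)}s^a\,ds\simeq r(\theta+r)^a$, valid for $a>-1$ and $\theta,r>0$. I would prove it by distinguishing $\theta\ge 2r$, in which case every $s$ in the interval satisfies $s\simeq\theta\simeq\theta+r$ so that the integral is comparable to $2r(\theta+r)^a$, from $\theta<2r$, in which case $\theta+r\simeq r$ and one bounds the integral above by $\int_0^{\theta+r}s^a\,ds\simeq(\theta+r)^{a+1}\simeq r(\theta+r)^a$ and below by $\int_{(\theta+r)/2}^{\theta+r}s^a\,ds\simeq r(\theta+r)^a$, the latter interval of integration being contained in $(\theta-r,\theta+r)\cap(0,\infty)$ because $3r>\theta$.

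Finally I would assemble the two-endpoint estimate. The reflection $s\mapsto\pi-s$, $\theta\mapsto\pi-\theta$, $\varphi\mapsto\pi-\varphi$ interchanges $\alpha$ and $\beta$, carries $B(\theta,r)$ to $B(\pi-\theta,r)$, and leaves the right-hand side above unchanged, so I may assume $\theta\le\pi/2$; then $(\pi-\theta)+r\simeq 1$ and the goal becomes $\m_{\ab}(B(\theta,r))\simeq r(\theta+r)^{2\alpha+1}$. If $\theta+r\le\pi/2$, then $\pi-s\simeq 1$ on all of $B(\theta,r)$, and the building block (with $a=2\alpha+1$) gives the claim at once. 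If $\theta+r>\pi/2$, then $\theta+r\simeq 1$, so the right-hand side is $\simeq r$, and it remains to see that $\m_{\ab}(B(\theta,r))\simeq r$: when $r$ is small this forces $\theta$ to be bounded below, so $B(\theta,r)$ is a genuine interval of length $2r$ lying in a fixed compact subinterval of $(0,\pi)$ where the weight is $\simeq 1$; when $r$ is bounded below, $r\simeq 1$ and one has $\m_{\ab}(B(\theta,r))\le\m_{\ab}((0,\pi))\simeq 1$ for the upper bound, while for the lower bound $B(\theta,r)$ contains a fixed subinterval of $(0,\pi)$ bounded away from $0$ and $\pi$. The only (minor) obstacle is the bookkeeping in this last step: choosing the thresholds and checking that all implicit constants are independent of $\theta,\varphi,r$; there is no genuine difficulty.
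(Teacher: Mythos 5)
Your argument is correct and complete. The paper itself dismisses this lemma with ``Simple exercise,'' so there is no proof to compare against; your reduction to the model weight $s^{2\alpha+1}(\pi-s)^{2\beta+1}$, the one-endpoint building block $\int_{(\theta-r,\theta+r)\cap(0,\infty)}s^a\,ds\simeq r(\theta+r)^a$ for $a>-1$, and the reflection/threshold bookkeeping are exactly the kind of routine verification the authors had in mind, and your proof works for the full range $\alpha,\beta>-1$ for which the lemma is asserted.
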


\begin{proof}
Simple exercise.
\end{proof}

The lemma below establishes an important connection between estimates naturally emerging 
from the representation \eqref{PJker} and the standard estimates related to the space of 
homogeneous type $((0,\pi),d\m_{\ab},|\cdot|)$. This is the essence of the whole technique.
A similar result, with appropriate adjustments, holds also in a multi-dimensional setting. 
\begin{lem} \label{bridge}
Let $\alpha,\beta \ge -1\slash 2$. Then
\begin{align*}
\iint \frac{d\Pi_{\alpha}(u)d\Pi_{\beta}(v)}{(q(\theta,\varphi,u,v))^{\alpha+\beta+3\slash 2}}
	& \lesssim \frac{1}{\m_{\ab}(B(\theta,|\varphi-\theta|))}, \qquad \theta,\varphi \in (0,\pi), 
	\quad \theta\neq \varphi,\\
	\iint \frac{d\Pi_{\alpha}(u)d\Pi_{\beta}(v)}{(q(\theta,\varphi,u,v))^{\alpha+\beta+2}} & \lesssim
	\frac{1}{|\theta-\varphi|\m_{\ab}(B(\theta,|\varphi-\theta|))}, 
	\qquad \theta,\varphi \in (0,\pi), \quad \theta\neq \varphi.
\end{align*}
\end{lem}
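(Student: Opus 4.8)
The plan is to recast everything in terms of three nonnegative quantities. Write
$$
E=2\sin^2\frac{\theta-\varphi}{4},\qquad A=\sin\frac\theta2\sin\frac\varphi2,\qquad B=\cos\frac\theta2\cos\frac\varphi2 ,
$$
noting $E>0$ since $\theta\neq\varphi$. The cosine angle-difference formula gives $A+B=\cos\frac{\theta-\varphi}{2}=1-E$, whence
$$
q(\theta,\varphi,u,v)=E+(1-u)A+(1-v)B,\qquad u,v\in[-1,1].
$$
Moreover $E\simeq|\theta-\varphi|^2$, while $E+A=1-B=1-\cos\frac\theta2\cos\frac\varphi2$ and $E+B=1-A=1-\cos\frac{\pi-\theta}{2}\cos\frac{\pi-\varphi}{2}$; since $1-\cos\xi\cos\eta\simeq(\xi+\eta)^2$ for $\xi,\eta\in[0,\pi\slash 2]$, this yields $E+A\simeq(\theta+\varphi)^2$ and $E+B\simeq(2\pi-\theta-\varphi)^2$. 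Comparing with Lemma~\ref{ball} we get $\m_{\ab}(B(\theta,|\varphi-\theta|))\simeq E^{1\slash 2}(E+A)^{\alpha+1\slash 2}(E+B)^{\beta+1\slash 2}$, so both assertions of the lemma reduce to the single estimate
$$
\iint\frac{d\Pi_{\alpha}(u)\,d\Pi_{\beta}(v)}{q^{s}}\;\lesssim\;\frac{E^{\alpha+\beta+1-s}}{(E+A)^{\alpha+1\slash 2}(E+B)^{\beta+1\slash 2}},
$$
to be applied with $s=\alpha+\beta+3\slash 2$ (so that $E^{\alpha+\beta+1-s}=E^{-1\slash 2}\simeq|\theta-\varphi|^{-1}$) and with $s=\alpha+\beta+2$ (so that it equals $E^{-1}$).

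The analytic core is the elementary one-variable estimate
$$
\int_0^1\frac{\big(w(1-w)\big)^{\gamma}\,dw}{(D+ew)^{\kappa}}\;\simeq\;\frac{D^{\gamma+1-\kappa}}{(D+e)^{\gamma+1}},\qquad D>0,\ e\ge 0,
$$
valid whenever $\gamma>-1$ and $\kappa>\gamma+1$, with comparability constants depending only on $\gamma,\kappa$. I would prove it by splitting at $w=1\slash 2$: on $[1\slash 2,1]$ one has $w(1-w)\simeq 1-w$ and $D+ew\simeq D+e$, so that part is $\simeq(D+e)^{-\kappa}$, dominated by the right-hand side; on $[0,1\slash 2]$, where $w(1-w)\simeq w$, one argues by cases according as $e\lesssim D$ or $e\gtrsim D$, splitting at $w=D\slash e$ in the latter case, the hypothesis $\kappa>\gamma+1$ ensuring that every resulting piece has the claimed order of magnitude. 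When $\alpha=-1\slash 2$ (or $\beta=-1\slash 2$) the relevant $\Pi$ is the two-point mass $\frac{1}{2}(\delta_{-1}+\delta_{1})$ and the corresponding integral degenerates to $\frac{1}{2}\big(D^{-s}+(D+2A)^{-s}\big)\simeq D^{-s}$, in agreement with the displayed formula (whose factor $(D+A)^{\alpha+1\slash 2}$ is then $1$); thus it suffices to treat $\alpha,\beta>-1\slash 2$.

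With these tools I would evaluate the double integral by Fubini, integrating in $u$ first. The substitution $u=1-2p$ turns $d\Pi_{\alpha}(u)$ into a constant multiple of $\big(p(1-p)\big)^{\alpha-1\slash 2}\,dp$ and $q$ into $D_v+2pA$, where $D_v:=E+(1-v)B>0$; since $s>\alpha+1\slash 2$, the elementary estimate with $\gamma=\alpha-1\slash 2$, $\kappa=s$, $e=2A$ gives
$$
\int_{-1}^{1}\frac{d\Pi_{\alpha}(u)}{q^{s}}\;\simeq\;\frac{D_v^{\alpha+1\slash 2-s}}{(D_v+A)^{\alpha+1\slash 2}}\;\le\;\frac{D_v^{\alpha+1\slash 2-s}}{(E+A)^{\alpha+1\slash 2}},
$$
the last step using $\alpha+1\slash 2\ge 0$ together with $D_v+A\ge E+A$. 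It then remains to integrate $D_v^{\alpha+1\slash 2-s}$ against $d\Pi_{\beta}(v)$; the substitution $v=1-2w$ and a second application of the elementary estimate, now with $\gamma=\beta-1\slash 2$ and $\kappa=s-\alpha-1\slash 2>\beta+1\slash 2$, produce exactly a constant times $E^{\alpha+\beta+1-s}(E+B)^{-\beta-1\slash 2}$. Multiplying the two bounds yields the reduced estimate above, and hence the lemma in both cases.

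The argument is entirely elementary and presents no genuine obstacle. The one point that needs real care — and where essentially all the work sits — is the regime analysis inside the one-variable estimate ($e$ large versus small, with splitting point $w=D\slash e$), together with the bookkeeping needed to check that the exponents of $E$, $E+A$ and $E+B$ come out exactly as dictated by Lemma~\ref{ball} for both $s=\alpha+\beta+3\slash 2$ and $s=\alpha+\beta+2$.
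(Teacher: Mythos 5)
Your proposal is correct and follows essentially the same route as the paper: both reduce the double integral to two successive one-dimensional beta-type integrals (your elementary estimate for $\int_0^1 (w(1-w))^{\gamma}(D+ew)^{-\kappa}\,dw$ is, after the substitution $u=1-2p$, exactly the paper's Lemma~\ref{lem58}, which is imported from \cite{NS}), and then both identify the resulting factors with $\m_{\ab}(B(\theta,|\varphi-\theta|))$ via the same trigonometric identities $1-\cos\frac{\theta}{2}\cos\frac{\varphi}{2}\simeq(\theta+\varphi)^2$, $1-\sin\frac{\theta}{2}\sin\frac{\varphi}{2}\simeq(2\pi-\theta-\varphi)^2$ and Lemma~\ref{ball}. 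The only differences are cosmetic: you integrate in $u$ first rather than $v$, you supply a proof of the auxiliary one-variable lemma instead of citing it, and you handle the point-mass cases $\alpha=-1/2$ or $\beta=-1/2$ explicitly rather than by continuity.
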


To prove this we will need the following simple estimate, see \cite[Lemma 5.8]{NS}.
\begin{lem} \label{lem58}
Let $\gamma \ge -1\slash 2$ and $\lambda > 0$ be fixed. Then
$$
\int  
	\frac{d\Pi_{\gamma}(s)}{(A-Bs)^{\gamma+1\slash 2 + \lambda}}
	\lesssim \frac{1}{A^{\gamma+1\slash 2}(A-B)^{\lambda}}, \qquad A>B>0.
$$
\end{lem}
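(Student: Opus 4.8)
The plan is to reduce the statement to an explicit one-variable integral and then exploit the precise structure of the denominator near the boundary point $s=1$, where $A-Bs$ is smallest. First I would dispose of the endpoint value $\gamma=-1/2$ separately. There $\Pi_{-1/2}=\frac12(\delta_{-1}+\delta_1)$, so the integral equals $\frac12\big[(A-B)^{-\lambda}+(A+B)^{-\lambda}\big]$; since $A+B>A-B>0$ and $A^{\gamma+1/2}=1$, this is at most $(A-B)^{-\lambda}$, which is exactly the claimed bound. From now on I assume $\gamma>-1/2$, so that $d\Pi_{\gamma}(s)$ has a density proportional to $(1-s^2)^{\gamma-1/2}\,ds$ on $[-1,1]$, and the constant can be ignored throughout.

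Next I would substitute $w=1-s$ and write $A-Bs=(A-B)+Bw$ together with $1-s^2=w(2-w)$, turning the integral into $\int_0^2 w^{\gamma-1/2}(2-w)^{\gamma-1/2}\big((A-B)+Bw\big)^{-(\gamma+1/2+\lambda)}\,dw$, which I then split at $w=1$. On $[1,2]$ one has $w\simeq1$ and $(A-B)+Bw\simeq A$ (it lies between $A$ and $A+B\le 2A$), so this piece is comparable to $A^{-(\gamma+1/2+\lambda)}\int_1^2(2-w)^{\gamma-1/2}\,dw$; the integral converges precisely because $\gamma>-1/2$, and $A^{-(\gamma+1/2+\lambda)}\le A^{-(\gamma+1/2)}(A-B)^{-\lambda}$ since $A-B\le A$, giving the desired bound for this part.

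The crux is the piece over $[0,1]$, where $(2-w)^{\gamma-1/2}\simeq1$ and one is left with $\tilde I:=\int_0^1 w^{\gamma-1/2}\big((A-B)+Bw\big)^{-(\gamma+1/2+\lambda)}\,dw$. Here I would distinguish two regimes according to the relative size of $B$ and $A-B$, noting that $A\simeq\max(B,A-B)$ throughout. When $B\le A-B$ (so $A\simeq A-B$), I simply bound the denominator below by $A-B$ and integrate $w^{\gamma-1/2}$ over $[0,1]$, obtaining $\tilde I\lesssim (A-B)^{-(\gamma+1/2+\lambda)}\simeq A^{-(\gamma+1/2+\lambda)}$, which suffices. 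When $B>A-B$ (so $A\simeq B$), I would rescale $w=\frac{A-B}{B}\,y$, so that $(A-B)+Bw=(A-B)(1+y)$; the prefactor then collapses to $B^{-(\gamma+1/2)}(A-B)^{-\lambda}\simeq A^{-(\gamma+1/2)}(A-B)^{-\lambda}$, and the remaining integral is dominated by $\int_0^\infty y^{\gamma-1/2}(1+y)^{-(\gamma+1/2+\lambda)}\,dy$, a Beta integral which is finite exactly because $\gamma+1/2>0$ and $\lambda>0$.

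The main obstacle is precisely the singular behaviour at $s=1$ (that is, $w=0$): the two natural lower bounds $(A-B)+Bw\ge A-B$ and $(A-B)+Bw\gtrsim Aw$ cannot be applied \emph{simultaneously in separate powers} of the denominator without producing a $w$-integral that diverges like $\int_0 dw/w$. This is why the full $\lambda$-decay must be retained and the estimate organized as the two-regime split above, which is the delicate point of the argument; once it is set up, only elementary bounds and the convergence of an incomplete Beta integral remain.
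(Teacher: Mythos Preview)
Your proof is correct. The paper itself does not prove this lemma; it simply cites it as \cite[Lemma~5.8]{NS} and uses it as a black box. Your argument---handling $\gamma=-1/2$ directly, then for $\gamma>-1/2$ substituting $w=1-s$, splitting at $w=1$, and in the main piece treating the regimes $B\le A-B$ and $B>A-B$ separately (the latter via the rescaling $w=\tfrac{A-B}{B}y$ leading to a convergent Beta integral)---is a clean and complete direct proof, and the convergence conditions are checked correctly at each step.
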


\begin{proof}[Proof of Lemma \ref{bridge}]
Applying Lemma \ref{lem58} first to the integral against $d\Pi_{\beta}(v)$ and then again to the
integral against $d\Pi_{\alpha}(u)$ we obtain
\begin{align*}
& \iint \frac{d\Pi_{\alpha}(u)d\Pi_{\beta}(v)}{(q(\theta,\varphi,u,v))^{\alpha+\beta+3\slash 2}}\\
& \lesssim \int \frac{d\Pi_{\alpha}(u)}
	{(1-u\sin\frac{\theta}2\sin\frac{\varphi}2)^{\beta+1\slash 2} 
	(1-\cos\frac{\theta}2\cos\frac{\varphi}2-u\sin\frac{\theta}2\sin\frac{\varphi}2)^{\alpha+1}} \\
& \le \frac{1}{(1-\sin\frac{\theta}2\sin\frac{\varphi}2)^{\beta+1\slash 2}}
	\int \frac{d\Pi_{\alpha}(u)}
	{(1-\cos\frac{\theta}2\cos\frac{\varphi}2-u\sin\frac{\theta}2\sin\frac{\varphi}2)^{\alpha+1}} \\
& \lesssim \frac{1}{(1-\sin\frac{\theta}2\sin\frac{\varphi}2)^{\beta+1\slash 2}
	(1-\cos\frac{\theta}2\cos\frac{\varphi}2)^{\alpha+1\slash 2}
	(1-\cos\frac{\theta}2\cos\frac{\varphi}2 -\sin\frac{\theta}2\sin\frac{\varphi}2)^{1\slash 2}}.
\end{align*}
We now observe that
\begin{align*}
1- \cos\frac{\theta}{2} \cos\frac{\varphi}{2} & = \sin^2\frac{\theta-\varphi}{4}
	+ \sin^2\frac{\theta+\varphi}{4} \simeq \theta^2 + \varphi^2, \\
1 - \sin\frac{\theta}{2}\sin\frac{\varphi}{2} & = \sin^2\frac{\theta-\varphi}{4}
	+ \cos^2\frac{\theta+\varphi}{4} \simeq (\pi-\theta)^2 + (\pi - \varphi)^2, \\
1- \cos\frac{\theta}{2} \cos\frac{\varphi}{2} - \sin\frac{\theta}{2}\sin\frac{\varphi}{2} & 
 = 1 - \cos\frac{\theta-\varphi}{2} = 2\sin^2\frac{\theta-\varphi}{4} \simeq (\theta-\varphi)^2, 	
\end{align*}
which gives
$$
\iint \frac{d\Pi_{\alpha}(u)d\Pi_{\beta}(v)}{(q(\theta,\varphi,u,v))^{\alpha+\beta+3\slash 2}}
\lesssim \frac{1}{|\theta-\varphi|(\theta^2+\varphi^2)^{\alpha+1\slash 2}
	((\pi-\theta)^2+(\pi-\varphi)^2)^{\beta+1\slash 2}}, 
$$
uniformly in $\theta,\varphi \in (0,\pi)$. This, in view of Lemma \ref{ball}, implies the 
first estimate of the lemma.

Parallel arguments justify the remaining estimate.
\end{proof}

The following technical result will be frequently applied in the proofs of the kernel estimates.
\begin{lem} \label{trig}
For all $\theta,\varphi \in (0,\pi)$ and $u,v \in [-1,1]$, one has
$$
\big| \partial_{\theta} q(\theta,\varphi,u,v) \big| \lesssim \sqrt{q(\theta,\varphi,u,v)} 
\qquad \textrm{and} \qquad
\big| \partial_{\varphi} q(\theta,\varphi,u,v) \big| \lesssim \sqrt{q(\theta,\varphi,u,v)}.
$$
\end{lem}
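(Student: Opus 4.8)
The statement concerns only the elementary function $q$, so no restriction on $\alpha,\beta$ is involved, and since $q(\theta,\varphi,u,v)=q(\varphi,\theta,u,v)$ it is enough to treat $\partial_\theta q$. First I would record the identity
$$
q(\theta,\varphi,u,v) = (1-u)\sin\tfrac{\theta}2\sin\tfrac{\varphi}2 + (1-v)\cos\tfrac{\theta}2\cos\tfrac{\varphi}2 + 2\sin^2\tfrac{\theta-\varphi}4 ,
$$
which follows from $\sin\tfrac\theta2\sin\tfrac\varphi2+\cos\tfrac\theta2\cos\tfrac\varphi2=\cos\tfrac{\theta-\varphi}2=1-2\sin^2\tfrac{\theta-\varphi}4$ and exhibits $q$ as a sum of three nonnegative quantities, since $u,v\le1$ and $\theta,\varphi\in(0,\pi)$. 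In particular $q$ dominates each of these three terms; the bound $2\sin^2\tfrac{\theta-\varphi}4\le q$ is already contained in \eqref{stpg10}.

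The derivative equals $2\partial_\theta q = v\sin\tfrac\theta2\cos\tfrac\varphi2 - u\cos\tfrac\theta2\sin\tfrac\varphi2$, and the heart of the matter is to bring out the cancellation it conceals: a crude estimate such as $|2\partial_\theta q|\le\sin\tfrac{\theta+\varphi}2$ is useless because the right-hand side need not be small when $q$ is small (take $\theta=\varphi$, $u=v=1$). Using $\sin\tfrac\theta2\cos\tfrac\varphi2-\cos\tfrac\theta2\sin\tfrac\varphi2=\sin\tfrac{\theta-\varphi}2$, I would split $2\partial_\theta q$ in the two ways
$$
2\partial_\theta q = v\sin\tfrac{\theta-\varphi}2 + (v-u)\cos\tfrac\theta2\sin\tfrac\varphi2 = u\sin\tfrac{\theta-\varphi}2 + (v-u)\sin\tfrac\theta2\cos\tfrac\varphi2 .
$$
Since $|u|,|v|\le1$, each representation gives $|2\partial_\theta q|\le|\sin\tfrac{\theta-\varphi}2|+|v-u|\,P$, once with $P=\cos\tfrac\theta2\sin\tfrac\varphi2$ and once with $P=\sin\tfrac\theta2\cos\tfrac\varphi2$; retaining the smaller bound and invoking $\min(x,y)\le\sqrt{xy}$ together with $\cos\tfrac\theta2\sin\tfrac\varphi2\cdot\sin\tfrac\theta2\cos\tfrac\varphi2=\tfrac14\sin\theta\sin\varphi$ leads to
$$
|2\partial_\theta q| \le \big|\sin\tfrac{\theta-\varphi}2\big| + \tfrac12\,|v-u|\,\sqrt{\sin\theta\sin\varphi} .
$$

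It remains to bound both summands by a constant multiple of $\sqrt q$. For the first one $\sin^2\tfrac{\theta-\varphi}2\le4\sin^2\tfrac{\theta-\varphi}4\le 2q$. For the second I would use $(v-u)^2\le2(1-u)^2+2(1-v)^2\le4(1-u)+4(1-v)$ (legitimate because $1-u,1-v\in[0,2]$) and then
$$
(1-u)\sin\theta\sin\varphi = 4\cos\tfrac\theta2\cos\tfrac\varphi2\cdot(1-u)\sin\tfrac\theta2\sin\tfrac\varphi2 \le 4q ,
$$
and likewise $(1-v)\sin\theta\sin\varphi\le 4q$, so that $(v-u)^2\sin\theta\sin\varphi\lesssim q$. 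Combining these estimates yields $|\partial_\theta q|\lesssim\sqrt q$, and the bound for $\partial_\varphi q$ follows from the symmetry of $q$ under $\theta\leftrightarrow\varphi$. I expect the only step that is not entirely routine to be the algebraic splitting in the second paragraph: one must peel off the $\sin\tfrac{\theta-\varphi}2$ contribution — which is controlled by the diagonal term of $q$ — so that the remaining off-diagonal factor $(v-u)(\cdots)$ can be balanced, via the geometric-mean inequality, against the two products that $q$ actually dominates.
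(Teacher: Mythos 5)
Your proof is correct. It shares its skeleton with the paper's argument: both rewrite $q$ as $2\sin^2\frac{\theta-\varphi}{4}+(1-u)\sin\frac{\theta}{2}\sin\frac{\varphi}{2}+(1-v)\cos\frac{\theta}{2}\cos\frac{\varphi}{2}$ (the paper's \eqref{stst}), peel off a $\sin\frac{\theta-\varphi}{2}$ contribution controlled by $q\gtrsim(\theta-\varphi)^2$, and charge what remains to the $(1-u)$- and $(1-v)$-terms of $q$. Where you diverge is in the treatment of that remainder. The paper differentiates \eqref{stst} directly, bounds the derivative by $|\theta-\varphi|+(1-u)\varphi+(1-v)(\pi-\varphi)$, and then controls $(1-u)^2\varphi^2$ by writing $\varphi^2\le(\theta-\varphi)^2+2\theta\varphi$ and using $(1-u)^2\le 2(1-u)$ together with $\theta\varphi\simeq\sin\frac{\theta}{2}\sin\frac{\varphi}{2}$, the $(1-v)$-term being handled by reflection in $\pi/2$. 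You instead keep the derivative in the raw form $\frac12\bigl(v\sin\frac{\theta}{2}\cos\frac{\varphi}{2}-u\cos\frac{\theta}{2}\sin\frac{\varphi}{2}\bigr)$, extract the common factor $v-u$ in two symmetric ways, and balance the resulting products by a geometric mean, reducing everything to $(1-u)\sin\theta\sin\varphi\le 4q$ and its mirror. Your route stays entirely within trigonometric identities (no comparison of $\sin\frac{\theta}{2}$ with $\theta$ is needed) and produces an explicit constant ($3/\sqrt{2}$, against the optimal $1/\sqrt{2}$ mentioned in the remark following the lemma); the paper's is marginally shorter. Either argument fully serves the purpose the lemma is put to in Section \ref{sec:ker}.
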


\begin{proof}
It is sufficient to show the first inequality, since the second then follows from the symmetry
$q(\theta,\varphi,u,v)=q(\varphi,\theta,u,v)$. Observe that
\begin{equation} \label{stst}
q(\theta,\varphi,u,v) = 1 - \cos\frac{\theta-\varphi}2 + (1-u)\sin\frac{\theta}2\sin\frac{\varphi}2
	+(1-v)\cos\frac{\theta}2\cos\frac{\varphi}2.
\end{equation}
Thus we have
\begin{align*}
|\partial_{\theta}q(\theta,\varphi,u,v)| & = \frac{1}{2} \Big| \sin\frac{\theta-\varphi}2
	+ (1-u)\cos\frac{\theta}2\sin\frac{\varphi}2 - (1-v)\sin\frac{\theta}2\cos\frac{\varphi}2 \Big| \\
& \le |\theta - \varphi| + (1-u)\varphi + (1-v)(\pi-\varphi).
\end{align*}
In the last expression, the first term is controlled by $\sqrt{q(\theta,\varphi,u,v)}$ because,
in view of \eqref{stst}, $q(\theta,\varphi,u,v) \gtrsim (\theta-\varphi)^2$.
For the second term, we write
\begin{align*}
(1-u)^2\varphi^2 & \le (1-u)^2(\theta^2+ \varphi^2) = (1-u)^2(\theta-\varphi)^2 + 2(1-u)^2\theta\varphi \\
& \lesssim (\theta-\varphi)^2 + (1-u) \sin\frac{\theta}2 \sin\frac{\varphi}2 \\
& \lesssim q(\theta,\varphi,u,v).
\end{align*}
A reflection of this argument in $\pi\slash 2$ covers the third term.
\end{proof}
We remark that $1\slash \sqrt{2}$ is the optimal constant for the inequalities in Lemma 
\ref{trig}, but proving this requires a more detailed analysis.

The result below will come into play when we verify the smoothness estimates \eqref{sm1} and \eqref{sm2}
for the relevant vector-valued kernels. It will enable us to reduce the difference conditions
to certain gradient estimates, which are easier to verify.
\begin{lem} \label{lem:comp}
For all $\theta,\widetilde{\theta},\varphi \in (0,\pi)$ with 
$|\theta-\varphi|>2|\theta-\widetilde{\theta}|$ and all $u,v \in [-1,1]$,
$$
q(\theta,\varphi,u,v) \simeq q(\widetilde{\theta},\varphi,u,v).
$$
Similarly, for all $\theta,\varphi,\widetilde{\varphi} \in (0,\pi)$ with 
$|\theta-\varphi|>2|\varphi-\widetilde{\varphi}|$ and all $u,v \in [-1,1]$,
$$
q(\theta,\varphi,u,v) \simeq q(\theta,\widetilde{\varphi},u,v).
$$
\end{lem}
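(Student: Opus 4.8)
The plan is to reduce the second comparability to the first one via the symmetry $q(\theta,\varphi,u,v)=q(\varphi,\theta,u,v)$, which is immediate from the definition of $q$ (after this reduction the hypothesis $|\theta-\varphi|>2|\varphi-\widetilde\varphi|$ becomes exactly the one of the first assertion). To prove the first comparability, the idea is to control $\sqrt{q(\cdot,\varphi,u,v)}$ along the segment $I\subset(0,\pi)$ joining $\theta$ and $\widetilde\theta$, using the gradient bound of Lemma~\ref{trig}; we may of course assume $\theta\neq\widetilde\theta$.

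First I would observe that $\varphi$ is bounded away from $I$: by the triangle inequality $|\widetilde\theta-\varphi|\ge|\theta-\varphi|-|\theta-\widetilde\theta|>|\theta-\widetilde\theta|$, hence $\operatorname{dist}(\varphi,I)=\min(|\theta-\varphi|,|\widetilde\theta-\varphi|)>|\theta-\widetilde\theta|>0$. Combined with the lower bound $q(\xi,\varphi,u,v)\ge 2\sin^2\frac{\xi-\varphi}{4}\gtrsim(\xi-\varphi)^2$ coming from \eqref{stpg10}, this shows that $q(\cdot,\varphi,u,v)$ is strictly positive on $I$, so $\xi\mapsto\sqrt{q(\xi,\varphi,u,v)}$ is smooth there. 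Then, by Lemma~\ref{trig}, $\big|\frac{d}{d\xi}\sqrt{q(\xi,\varphi,u,v)}\big|=\frac{|\partial_\xi q(\xi,\varphi,u,v)|}{2\sqrt{q(\xi,\varphi,u,v)}}\lesssim 1$ uniformly in all parameters, and integrating along $I$ yields $\big|\sqrt{q(\theta,\varphi,u,v)}-\sqrt{q(\widetilde\theta,\varphi,u,v)}\big|\lesssim|\theta-\widetilde\theta|$. Finally I would bound $|\theta-\widetilde\theta|$ by $\sqrt{q}$ at \emph{each} of the two endpoints separately: since $|\theta-\widetilde\theta|<\frac12|\theta-\varphi|$, the bound $q(\theta,\varphi,u,v)\gtrsim(\theta-\varphi)^2$ gives $|\theta-\widetilde\theta|\lesssim\sqrt{q(\theta,\varphi,u,v)}$, while $|\theta-\widetilde\theta|<|\widetilde\theta-\varphi|$ together with $q(\widetilde\theta,\varphi,u,v)\gtrsim(\widetilde\theta-\varphi)^2$ gives $|\theta-\widetilde\theta|\lesssim\sqrt{q(\widetilde\theta,\varphi,u,v)}$. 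Substituting both into the previous display produces $\big|\sqrt{q(\theta,\varphi,u,v)}-\sqrt{q(\widetilde\theta,\varphi,u,v)}\big|\lesssim\min\big(\sqrt{q(\theta,\varphi,u,v)},\sqrt{q(\widetilde\theta,\varphi,u,v)}\big)$, which forces $\sqrt{q(\theta,\varphi,u,v)}\simeq\sqrt{q(\widetilde\theta,\varphi,u,v)}$, hence $q(\theta,\varphi,u,v)\simeq q(\widetilde\theta,\varphi,u,v)$.

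The main obstacle I anticipate is obtaining \emph{both} inequalities in the comparability with purely qualitative constants: a single application of the mean value inequality controls the increment of $\sqrt q$ by a constant times $|\theta-\widetilde\theta|$, which gives one inequality outright but not the other unless that constant happens to be $<1$ --- and the paper deliberately avoids relying on the sharp constant in Lemma~\ref{trig}. The way around this is precisely the last step above: using the hypothesis $|\theta-\varphi|>2|\theta-\widetilde\theta|$ and the triangle inequality, $|\theta-\widetilde\theta|$ can be dominated by $\sqrt q$ at \emph{either} endpoint, so the increment is at most a constant times $\min\big(\sqrt{q(\theta,\varphi,u,v)},\sqrt{q(\widetilde\theta,\varphi,u,v)}\big)$, and $|a-b|\lesssim\min(a,b)$ trivially implies $a\simeq b$. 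The remaining points --- that $I\subset(0,\pi)$ so that Lemma~\ref{trig} applies along it, and that $\sqrt{q(\cdot,\varphi,u,v)}$ is actually differentiable on $I$ --- are routine once $\operatorname{dist}(\varphi,I)>0$ has been noted.
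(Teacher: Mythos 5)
Your proof is correct, but it takes a genuinely different route from the paper's. The paper proves the lemma by brute force on the explicit expression \eqref{ststst}, $q\simeq(\theta-\varphi)^2+(1-u)\theta\varphi+(1-v)(\pi-\theta)(\pi-\varphi)$, checking term by term (with a case split on $\varphi<2\theta$ versus $\varphi\ge 2\theta$, and a reflection in $\pi/2$ for the third term) that replacing $\theta$ by $\widetilde\theta$ preserves the order of magnitude. You instead integrate the gradient bound of Lemma~\ref{trig} along the segment from $\widetilde\theta$ to $\theta$ to get $\bigl|\sqrt{q(\theta,\varphi,u,v)}-\sqrt{q(\widetilde\theta,\varphi,u,v)}\bigr|\lesssim|\theta-\widetilde\theta|$, and then use the hypothesis together with $q\gtrsim(\theta-\varphi)^2$ (from \eqref{stpg10}) to dominate $|\theta-\widetilde\theta|$ by $\sqrt q$ at \emph{each} endpoint, so that $|a-b|\lesssim\min(a,b)$ forces $a\simeq b$. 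All steps check out: $\varphi$ stays at positive distance from the segment, so $\sqrt q$ is smooth there, and the final implication $|a-b|\le C\min(a,b)\Rightarrow a\simeq b$ is elementary. Your argument is shorter and more conceptual — it uses only the two structural facts $|\partial_\theta q|\lesssim\sqrt q$ and $q\gtrsim(\theta-\varphi)^2$, so it would transfer verbatim to any kernel quantity with those properties (e.g.\ in a multi-dimensional setting, which the paper alludes to before Lemma~\ref{bridge}); the trade-off is that it leans on Lemma~\ref{trig}, whereas the paper's case analysis is self-contained and exposes exactly which term of \eqref{ststst} controls $q$ in each regime. The reduction of the second comparability to the first via the symmetry $q(\theta,\varphi,u,v)=q(\varphi,\theta,u,v)$ matches the paper.
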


\begin{proof}
For symmetry reasons, it is enough to verify the first relation. By \eqref{stst},
\begin{equation} \label{ststst}
q(\theta,\varphi,u,v) \simeq (\theta-\varphi)^2 + (1-u)\theta\varphi + (1-v)(\pi-\theta)(\pi-\varphi).
\end{equation}
The three terms in the expression \eqref{ststst} together determine the order of magnitude of
$q(\theta,\varphi,u,v)$.
When $\theta$ is replaced by $\widetilde{\theta}$, the first term does not change its order of magnitude,
because of the hypothesis made. To deal with the second term, we first assume that $\varphi < 2\theta$.
Then the hypothesis implies $|\theta-\widetilde{\theta}|<|\theta-\varphi|\slash 2< \theta\slash 2$.
Thus $\theta \simeq \widetilde{\theta}$, which means that the replacement does not change the order
of magnitude of the second term. In the remaining case $\varphi \ge 2 \theta$, we have
$\varphi \simeq |\varphi-\theta| \simeq |\varphi-\widetilde{\theta}|$ and $\theta \le |\varphi-\theta|$,
so that $\widetilde{\theta}< \theta + |\varphi-\theta|\slash 2 \lesssim |\varphi-\widetilde{\theta}|$.
Then the second term is dominated by the first, in \eqref{ststst} and in the analogous expression
with $\widetilde{\theta}$ instead of $\theta$.
Since the third term can be treated like the second after a reflection in $\pi\slash 2$, the lemma follows.
\end{proof}

In the sequel we will often omit the arguments and write $\q$ instead of $q(\theta,\varphi,u,v)$.
We shall tacitly assume that passing with the differentiation in $\theta$, or $\varphi$ or $t$,
under the integral against $d\Pi_{\alpha}(u)\, d\Pi_{\beta}(v)$ or against $dt$ is legitimate;
similarly for changing orders of integrals or sums. This is indeed the case in all the relevant cases,
which may be verified in a straightforward manner by means of the estimates obtained in the proof of
Theorem \ref{thm:stand}; see \cite[Section 5]{NS} or \cite[Section 4]{Szarek} where the details are 
given in the context of Laguerre function expansions.

We are now in a position to give the proof of Theorem \ref{thm:stand}.
We first treat the kernel $\{H_t^{\ab}(\theta,\varphi)\}_{t>0}$ associated to the Jacobi-Poisson
semigroup maximal operator, which is the easiest to estimate.

\begin{proof}[Proof of Theorem \ref{thm:stand}; the case of $\H_*^{\ab}$]
We first deal with the growth condition \eqref{gr} specified to $\mathbb{B}=\mathbb{X}$. Observe that
$$
\frac{\sinh\frac{t}2}{(\cosh\frac{t}2-1+\q)^{\alpha+\beta+2}} \lesssim
	\frac{1}{\q^{\alpha+\beta+3\slash 2}},
$$
uniformly in $\q$ and $t>0$. For $t$ small this follows by the asymptotics 
$\cosh\frac{t}2-1=\mathcal{O}(t^2)$, $t\to 0$, and for large $t$ we 
use the asymptotics $\tanh\frac{t}2 = \mathcal{O}(1)$, $t\to\infty$,
and boundedness of the quantity $\q$. Then Proposition \ref{prop:rep_H} implies
$$
\|\{H_t^{\ab}(\theta,\varphi)\}\|_{\mathbb{X}} \lesssim 
	\iint \frac{d\Pi_{\alpha}(u)d\Pi_{\beta}(v)}
		{\q^{\alpha+\beta+3\slash 2}},
$$
and the growth bound follows immediately from Lemma \ref{bridge}.

To show the smoothness conditions \eqref{sm1} and \eqref{sm2}, it is enough to consider \eqref{sm1}, 
by symmetry. We first analyze the derivative $\partial_{\theta}H_t^{\ab}(\theta,\varphi)$.
Applying Proposition \ref{prop:rep_H} and Lemma \ref{trig}, we get
\begin{align*}
\Big| \frac{\partial}{\partial \theta} H_t^{\ab}(\theta,\varphi)\Big| & \lesssim
	\sinh\frac{t}2 \iint {d\Pi_{\alpha}(u)d\Pi_{\beta}(v)} 
		\frac{|\partial_{\theta}\q|}{(\cosh\frac{t}2-1+\q)^{\alpha+\beta+3}} \\
& \lesssim \sinh\frac{t}2 \iint \frac{d\Pi_{\alpha}(u)d\Pi_{\beta}(v)} 
		{(\cosh\frac{t}2-1+\q)^{\alpha+\beta+5\slash 2}}.
\end{align*}
By the Mean Value Theorem and the above estimate, we have
\begin{align*}
|H_t^{\ab}(\theta,\varphi)-H_t^{\ab}(\theta',\varphi)| & \le |\theta-\theta'|
	|\partial_{\theta}H_t^{\ab}(\widetilde{\theta},\varphi)| \\
& \lesssim  |\theta-\theta'| \sinh\frac{t}2 \iint \frac{d\Pi_{\alpha}(u)d\Pi_{\beta}(v)} 
	{(\cosh\frac{t}2-1+q(\widetilde{\theta},\varphi,u,v))^{\alpha+\beta+5\slash 2}},
\end{align*}
where $\widetilde{\theta}$ is a convex combination of $\theta$ and $\theta'$ 
(notice that $\widetilde{\theta}$ depends also on $t$). Then assuming that
$|\theta-\varphi|>2|\theta-\theta'|$, which implies $|\theta-\varphi|>2|\theta-\widetilde{\theta}|$,
and using Lemma \ref{lem:comp}, we get that 
$$
|H_t^{\ab}(\theta,\varphi)-H_t^{\ab}(\theta',\varphi)| \lesssim 
	|\theta-\theta'| \sinh\frac{t}2 \iint \frac{d\Pi_{\alpha}(u)d\Pi_{\beta}(v)} 
	{(\cosh\frac{t}2-1+\q)^{\alpha+\beta+5\slash 2}}.
$$
Now the conclusion follows by Lemma \ref{bridge} as in case of the growth estimate.
\end{proof}

We next show that the kernels associated to the imaginary powers of the Jacobi operator
satisfy the standard estimates for $\alpha,\beta \ge -1\slash 2$ such that $\alpha+\beta > -1$
(the case $\alpha=\beta=-1\slash 2$ must be excluded since then $0$ 
is an eigenvalue of the Jacobi operator). Recall that
$$
K^{\ab}_{\gamma}(\theta,\varphi) = \frac{1}{\Gamma(2i\gamma)} \int_0^{\infty}
	H_t^{\ab}(\theta,\varphi) t^{2i\gamma-1}dt, \qquad \gamma \in \R\backslash\{0\}.
$$

\begin{proof}[Proof of Theorem \ref{thm:stand}; the case of $I_{\gamma}^{\ab}$]
By Proposition \ref{prop:rep_H},
$$
|K^{\ab}_{\gamma}(\theta,\varphi)| \lesssim \int_0^{\infty} \frac{1}{t} \sinh\frac{t}2
\iint \frac{d\Pi_{\alpha}(u)d\Pi_{\beta}(v)}
	{(\cosh\frac{t}2 - 1 + \q)^{\alpha+\beta+2}}\, dt.
$$
We now split the integral in $t$ into the intervals $(0,1)$ and $(1,\infty)$ and denote the resulting
integrals by $I_0$ and $I_{\infty}$, respectively. Then
$$
I_{0} \lesssim \iint {d\Pi_{\alpha}(u)d\Pi_{\beta}(v)}
	\int_0^1 \frac{dt}{(t^2 + \q)^{\alpha+\beta+2}}\, dt
$$
and changing the variable $t\mapsto \sqrt{\q}s$ we get
$$
I_0 \lesssim \iint \frac{d\Pi_{\alpha}(u)d\Pi_{\beta}(v)}
	{\q^{\alpha+\beta+3\slash 2}} \int_0^{\infty} \frac{ds}{(1+s^2)^{\alpha+\beta+2}}
\lesssim \iint \frac{d\Pi_{\alpha}(u)d\Pi_{\beta}(v)}
	{\q^{\alpha+\beta+3\slash 2}}.
$$
Estimating $I_{\infty}$ is even more straightforward; we have
$$
I_{\infty} \lesssim \iint {d\Pi_{\alpha}(u)d\Pi_{\beta}(v)} \int_1^{\infty}
	\frac{e^{t\slash 2}dt}{(e^{t\slash 2})^{\alpha+\beta+2}}
	\lesssim \iint {d\Pi_{\alpha}(u)d\Pi_{\beta}(v)}
	\lesssim \iint \frac{d\Pi_{\alpha}(u)d\Pi_{\beta}(v)}
	{\q^{\alpha+\beta+3\slash 2}},
$$
where in the last step we used the boundedness of $\q$. In view of Lemma \ref{bridge}, the growth 
condition \eqref{gr} (with $\mathbb{B}=\mathbb{C}$) for $K^{\ab}_{\gamma}(\theta,\varphi)$ follows.

To show the gradient condition \eqref{sm}, we use analogous arguments combined with Lemma \ref{trig}.
For symmetry reasons, we may consider only the partial derivative in $\theta$. Then
$$
\Big|\frac{\partial}{\partial \theta} K^{\ab}_{\gamma}(\theta,\varphi)\Big| \lesssim
\int_0^{\infty} \frac{1}{t} \sinh\frac{t}2 \iint {d\Pi_{\alpha}(u)d\Pi_{\beta}(v)}
	\frac{|\partial_{\theta}\q|}{(\cosh\frac{t}2 - 1 + \q)^{\alpha+\beta+3}}\, dt.
$$
Applying Lemma \ref{trig} and proceeding as before, we get
$$
\Big|\frac{\partial}{\partial \theta} K^{\ab}_{\gamma}(\theta,\varphi)\Big|  \lesssim
\int_0^{\infty} \frac{1}{t} \sinh\frac{t}2 \iint \frac{d\Pi_{\alpha}(u)d\Pi_{\beta}(v)}
	{(\cosh\frac{t}2 - 1 + \q)^{\alpha+\beta+5\slash 2}}\, dt 
 \lesssim \iint \frac{d\Pi_{\alpha}(u)d\Pi_{\beta}(v)}{\q^{\alpha+\beta+2}}.
$$
The desired conclusion follows now from Lemma \ref{bridge}.
\end{proof}

The next kernels to be considered are those of the Riesz-Jacobi transforms of arbitrary order $N$,
$$
R_N^{\ab}(\theta,\varphi) = \frac{1}{\Gamma(N)}\int_0^{\infty} 
	\partial_{\theta}^N H_t^{\ab}(\theta,\varphi) t^{N-1}\,dt, \qquad N\ge 1.
$$
However, for the sake of clarity and the reader's convenience, we first treat separately and in greater
detail the more elementary case of the Riesz-Jacobi transform of order $N=1$.
We will write simply $R^{\ab}(\theta,\varphi)$ instead of $R_1^{\ab}(\theta,\varphi)$.

\begin{proof}[Proof of Theorem \ref{thm:stand}; the case of $R_1^{\ab}$]
By an elementary computation and Lemma \ref{trig},
\begin{align*}
|R^{\ab}(\theta,\varphi)| & \lesssim \int_0^{\infty}  \sinh\frac{t}2
\iint {d\Pi_{\alpha}(u)d\Pi_{\beta}(v)} \frac{|\partial_{\theta}\q|}
	{(\cosh\frac{t}2 - 1 + \q)^{\alpha+\beta+3}}\, dt \\
& \lesssim \iint {d\Pi_{\alpha}(u)d\Pi_{\beta}(v)} \int_0^{\infty}
	\frac{\sinh\frac{t}2 \, dt}{(\cosh\frac{t}2 - 1 + \q)^{\alpha+\beta+5\slash 2}}.
\end{align*}
From here we proceed as in the case of $I_{\gamma}^{\ab}$. Splitting the integral in $t$ into 
$I_0$ and $I_{\infty}$, as before, we get
\begin{align*}
I_0 & \lesssim \iint {d\Pi_{\alpha}(u)d\Pi_{\beta}(v)} \int_0^1
	\frac{t \, dt}{(t^2 + \q)^{\alpha+\beta+5\slash 2}} 
	\le \iint \frac{d\Pi_{\alpha}(u)d\Pi_{\beta}(v)}{\q^{\alpha+\beta+3\slash 2}} \int_0^{\infty} 
		\frac{s\,ds}{(1+s^2)^{\alpha+\beta+5\slash 2}}, \\
I_{\infty} & \lesssim \iint {d\Pi_{\alpha}(u)d\Pi_{\beta}(v)} \int_1^{\infty}
	\frac{e^{t\slash 2} \, dt}{(e^{t\slash 2})^{\alpha+\beta+5\slash 2}} \lesssim
	\iint \frac{d\Pi_{\alpha}(u)d\Pi_{\beta}(v)}{\q^{\alpha+\beta+3\slash 2}},
\end{align*}
where in the last estimate we used the boundedness of $\q$. Thus
$$
|R^{\ab}(\theta,\varphi)| \lesssim \iint \frac{d\Pi_{\alpha}(u)d\Pi_{\beta}(v)}
		{\q^{\alpha+\beta+3\slash 2}},
$$
and the asserted growth condition \eqref{gr} (with $\mathbb{B}=\mathbb{C}$) follows from Lemma \ref{bridge}.

We pass to the smoothness condition \eqref{sm} and start by finding bounds for the relevant derivatives.
Observe that since $\partial_{\theta}^2 \q = (1-\q)\slash 4$,
\begin{align*}
\bigg| \frac{\partial}{\partial \theta} \bigg(
 \frac{\partial_{\theta}\q}{(\cosh\frac{t}2 - 1 + \q)^{\alpha+\beta+3}} \bigg) \bigg| & \lesssim
 \frac{(\partial_{\theta}\q)^2}{(\cosh\frac{t}2 - 1 + \q)^{\alpha+\beta+4}}
	+ \frac{|\q-1|}{(\cosh\frac{t}2 - 1 + \q)^{\alpha+\beta+3}} \\
& \lesssim \frac{1}{(\cosh\frac{t}2 - 1 + \q)^{\alpha+\beta+3}},
\end{align*}
where in the last step we used Lemma \ref{trig} and the boundedness of the quantity $\q$. 
Similarly, using this time both inequalities of Lemma \ref{trig},
\begin{align*}
\bigg| \frac{\partial}{\partial \varphi} \bigg(
 \frac{\partial_{\theta}\q}{(\cosh\frac{t}2 - 1 + \q)^{\alpha+\beta+3}} \bigg) \bigg|
& \lesssim \frac{|\partial_{\theta}\q \partial_{\varphi}\q|}
	{(\cosh\frac{t}2 - 1 + \q)^{\alpha+\beta+4}}
	+ \frac{|q(\theta,\varphi,v,u)-1|}{(\cosh\frac{t}2 - 1 + \q)^{\alpha+\beta+3}} \\
& \lesssim \frac{1}{(\cosh\frac{t}2 - 1 + \q)^{\alpha+\beta+3}}.
\end{align*}
Taking the above bounds and \eqref{PJker} into account, we see that
$$
\Big| \frac{\partial}{\partial \theta} R^{\ab}(\theta,\varphi) \Big| +
	\Big| \frac{\partial}{\partial \varphi} R^{\ab}(\theta,\varphi) \Big| 
\lesssim \iint {d\Pi_{\alpha}(u)d\Pi_{\beta}(v)} \int_0^{\infty}
	\frac{\sinh\frac{t}2 \, dt}{(\cosh\frac{t}2 - 1 + \q)^{\alpha+\beta+3}}.
$$
Arguing as in case of the growth condition, we infer that
$$
\Big| \frac{\partial}{\partial \theta} R^{\ab}(\theta,\varphi) \Big| +
	\Big| \frac{\partial}{\partial \varphi} R^{\ab}(\theta,\varphi) \Big|
\lesssim \iint \frac{d\Pi_{\alpha}(u)d\Pi_{\beta}(v)}
		{\q^{\alpha+\beta+2}},
$$
and this combined with Lemma \ref{bridge} leads to the desired conclusion.
\end{proof}

To estimate the kernel $R_N^{\ab}(\theta,\varphi)$ for a general $N\ge 1$, we will need 
the following technical result.

\begin{lem} \label{derb}
For $\alpha,\beta \ge -1\slash 2$ and $N = 0,1,2,\ldots$
\begin{align*}
\bigg| \partial_{\theta}^N {\Big(\cosh\frac{t}2-1+\q\Big)^{-\alpha-\beta-2}}\bigg| &
\lesssim 
\begin{cases}
{(\cosh\frac{t}2-1+\q)^{-\alpha-\beta-2-N\slash 2}}, & t \le 1\\
{(\cosh\frac{t}2-1+\q)^{-\alpha-\beta-5\slash 2}}, & t > 1, \; N \ge 1
\end{cases}, \\
\bigg| \partial_{\varphi}\partial_{\theta}^N {\Big(\cosh\frac{t}2-1+\q\Big)^{-\alpha-\beta-2}}\bigg| &
\lesssim 
\begin{cases}
{(\cosh\frac{t}2-1+\q)^{-\alpha-\beta-5\slash 2-N\slash 2}}, & t \le 1\\
{(\cosh\frac{t}2-1+\q)^{-\alpha-\beta-5\slash 2}}, & t > 1 
\end{cases}.
\end{align*}
\end{lem}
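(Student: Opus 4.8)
The plan is to reduce everything to Faà di Bruno's formula \eqref{Faa} together with Leibniz's rule. Writing $G(x) = (\cosh\frac{t}{2}-1+x)^{-\alpha-\beta-2}$ and abbreviating $D = \cosh\frac{t}{2}-1+\q$, the quantities to be bounded are $\partial_\theta^N[G(\q)]$ and $\partial_\varphi\partial_\theta^N[G(\q)]$, with $\varphi,u,v$ treated as parameters. Expanding $\partial_\theta^N[G(\q)]$ by \eqref{Faa} produces a finite sum of terms of the shape $c\,D^{-\alpha-\beta-2-p}\prod_{i=1}^{p}\partial_\theta^{j_i}\q$, where $j_1,\dots,j_p$ are positive integers with $\sum_i j_i = N$ and $c$ depends only on the $j_i$ and on $\alpha+\beta$. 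The estimates then rest on three elementary ingredients: (a) $|G^{(k)}(\q)| \lesssim D^{-\alpha-\beta-2-k}$, just the derivative of a power; (b) Lemma \ref{trig}, which gives $|\partial_\theta\q|+|\partial_\varphi\q| \lesssim \sqrt{\q} \le \sqrt{D}$, so that each first-order derivative of $\q$ gains a factor $D^{1/2}$; and (c) the observation, immediate from the explicit expression \eqref{stst}, that every mixed partial of $\q$ of total order $\ge 2$ is bounded uniformly in $\theta,\varphi,u,v$ (and in $t$). I will also use the dichotomy that $D \lesssim 1$ when $t\le 1$ (then $\cosh\frac t2-1\le\cosh\frac12-1$ and $\q\le 2$), whereas $D \gtrsim 1$ when $t>1$ (then $\cosh\frac t2-1\ge\cosh\frac12-1>0$); this dictates in which direction negative powers of $D$ may be compared.

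For the first estimate, fix a term as above and let $p_1$ be the number of indices with $j_i=1$. By (b) the first-order factors contribute $\lesssim D^{p_1/2}$ and by (c) the remaining $p-p_1$ factors contribute $\lesssim 1$, so the term is $\lesssim D^{-\alpha-\beta-2-p+p_1/2}$. Since each factor of order $\ge 2$ consumes at least two units of the budget $N=\sum_i j_i$, one has $N \ge p_1 + 2(p-p_1) = 2p-p_1$, hence $p-p_1/2 \le N/2$; moreover $p\ge 1$ whenever $N\ge 1$. For $t\le 1$, where $D\lesssim 1$, this gives $D^{-\alpha-\beta-2-p+p_1/2}\lesssim D^{-\alpha-\beta-2-N/2}$; for $t>1$ with $N\ge1$, where $D\gtrsim 1$ and $2p-p_1\ge p\ge 1$, it gives $D^{-\alpha-\beta-2-p+p_1/2}\lesssim D^{-\alpha-\beta-5/2}$. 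Summing over the finitely many terms yields the first assertion (the case $N=0$ being trivial for $t\le 1$ and excluded for $t>1$).

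For the second estimate I would differentiate the above expansion once more in $\varphi$ by Leibniz. Each resulting term is either of the form $D^{-\alpha-\beta-3-p}\,\partial_\varphi\q\prod_i\partial_\theta^{j_i}\q$ — one extra first-order factor, total $\q$-derivative order $N+1$ — or of the form $D^{-\alpha-\beta-2-p}\,\partial_\varphi\partial_\theta^{j_i}\q\prod_{i'\ne i}\partial_\theta^{j_{i'}}\q$, where one factor is replaced by $\partial_\varphi\partial_\theta^{j_i}\q$, a quantity of total order $j_i+1\ge 2$, hence $\lesssim 1$ by (c). Running the same counting with $N$ replaced by $N+1$ in the budget constraint shows each term is $\lesssim D^{-\alpha-\beta-5/2-N/2}$ when $t\le1$ and $\lesssim D^{-\alpha-\beta-5/2}$ when $t>1$; here no restriction on $N$ is needed, since for $N=0$ the identity $\partial_\varphi[G(\q)]=G'(\q)\,\partial_\varphi\q$ already gives the bound directly from (a) and (b). The main, though routine, obstacle is the bookkeeping in the regime $t>1$: there $D$ is bounded below but not above, so negative powers of $D$ obey the opposite comparison to the one used for $t\le1$, and one must verify that in \emph{every} term the exponent of $D$ is at most $-\alpha-\beta-5/2$ — which is precisely the place where the presence of at least one $\q$-factor (forced by $N\ge1$, or by the extra $\partial_\varphi$ when $N=0$) is essential.
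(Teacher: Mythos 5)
Your proposal is correct and follows essentially the same route as the paper: Faà di Bruno's formula \eqref{Faa}, the bound $|\partial_\theta\q|\lesssim\sqrt{\q}$ from Lemma \ref{trig}, uniform boundedness of the higher-order derivatives of $\q$, and the exponent-counting via the constraint $\sum_i i k_i=N$ together with the opposite comparisons of negative powers in the regimes $t\le 1$ and $t>1$. The only (immaterial) difference is that the paper additionally exploits the identities $\partial_\theta^{2m}\q=(-4)^{-m}(\q-1)$ and $\partial_\theta^{2m-1}\q=(-4)^{1-m}\partial_\theta\q$ to gain a factor $\sqrt{\q}$ from every odd-order factor, whereas you use only the first-order ones; your weaker input still closes the same counting inequality.
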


Clearly, this still holds if $\alpha+\beta$ is replaced in both sides of the inequalities
by any quantity $\gamma$ satisfying $\gamma \ge -1$.

\begin{proof}[Proof of Lemma \ref{derb}]
We assume $N \ge 1$. The simple case $N=0$ is left to the reader.
To analyze the relevant derivatives, we will use Fa\`a di Bruno's formula. 
Choosing $g(x)=x^{-\alpha-\beta-2}$ and $f(\theta)=\cosh\frac{t}2-1+\q$ in \eqref{Faa},
it follows that $\partial_{\theta}^N(\cosh\frac{t}2-1+\q)^{-\alpha-\beta-2}$ 
is a linear combination of expressions of the form
\begin{equation} \label{f1}
\frac{(\partial^1_{\theta}\q)^{k_1}\cdots (\partial^N_{\theta}\q)^{k_N}}
	{(\cosh\frac{t}2-1+\q)^{\alpha+\beta+2+k_1+\ldots+k_N}}, 
\end{equation}
where $k_1+2k_2+\ldots+Nk_N = N$. Since for $m\ge 1$
$$
\partial_{\theta}^{2m}\q = (-4)^{-m}(\q-1), \qquad \partial_{\theta}^{2m-1}\q = 
(-4)^{1-m}\partial_{\theta}\q,
$$
we see by Lemma \ref{trig} and the boundedness of $\q$ that for $m \ge 1$
$$
|\partial_{\theta}^m\q| \lesssim
\begin{cases}
1, & m \;\textrm{even}\\
\sqrt{\q}, & m \; \textrm{odd}
\end{cases}.
$$
This combined with \eqref{f1} implies
$$
\bigg| \partial_{\theta}^N {\Big(\cosh\frac{t}2-1+\q\Big)^{-\alpha-\beta-2}}\bigg|
\lesssim \sum_{k_1+2k_2+\ldots+Nk_n=N} \frac{\sqrt{\q}^{k_1+k_3+\ldots+k_{\tilde{N}}}}
	{(\cosh\frac{t}2-1+\q)^{\alpha+\beta+2+k_1+\ldots+k_N}}, 
$$
where $\tilde{N}=N$ if $N$ is odd and $\tilde{N}=N-1$ if $N$ is even. Taking into account the 
boundedness of $\q$ and observing that the constraint $k_1+2k_2+\ldots+Nk_n=N$ forces 
$k_1+\ldots+k_N - (k_1+k_3+\ldots+k_{\tilde{N}})\slash 2 \le N\slash 2$,
we get the first two estimates of the lemma.

Applying $\partial_{\varphi}$ to \eqref{f1}, we infer that 
$\partial_{\varphi}\partial_{\theta}^N(\cosh\frac{t}2-1+\q)^{-\alpha-\beta-2}$ is a linear
combination of expressions of the form
$$
\frac{(\partial^1_{\theta}\q)^{k_1}\cdots (\partial^N_{\theta}\q)^{k_N}\partial_{\varphi}\q}
	{(\cosh\frac{t}2-1+\q)^{\alpha+\beta+3+k_1+\ldots+k_N}} \quad \textrm{and} \quad
\frac{(\partial^1_{\theta}\q)^{k_1}\cdots (\partial^N_{\theta}\q)^{k_N}
		\partial_{\varphi}\partial_{\theta}^i\q\slash \partial_{\theta}^i\q}
	{(\cosh\frac{t}2-1+\q)^{\alpha+\beta+2+k_1+\ldots+k_N}}, 
$$
where $k_1+2k_2+\ldots+Nk_N = N$, $i=1,\ldots,N$; for the second form we exclude
the cases when $k_i=0$. 
By means of the bounds on $|\partial_{\theta}^m\q|$ and $|\partial_{\varphi}\q|$ (cf. Lemma \ref{trig}),
and the boundedness of $|\partial_{\varphi}\partial_{\theta}^i\q|$ and $\q$, we conclude that
\begin{align*}
& \bigg| \partial_{\varphi}\partial_{\theta}^N {\Big(\cosh\frac{t}2-1+\q\Big)^{-\alpha-\beta-2}}\bigg|\\
& \lesssim \sum_{k_1+2k_2+\ldots+Nk_N = N} \Bigg[\frac{\sqrt{\q}^{k_1+k_3+\ldots+k_{\tilde{N}}}}
	{(\cosh\frac{t}2-1+\q)^{\alpha+\beta+5\slash 2+k_1+\ldots+k_N}}
	+ \frac{\sqrt{\q}^{k_1+k_3+\ldots+k_{\tilde{N}}-1}}
	{(\cosh\frac{t}2-1+\q)^{\alpha+\beta+2+k_1+\ldots+k_N}}\Bigg]\\
& \lesssim \sum_{k_1+2k_2+\ldots+Nk_N = N} \frac{\sqrt{\q}^{k_1+k_3+\ldots+k_{\tilde{N}}-1}}
	{(\cosh\frac{t}2-1+\q)^{\alpha+\beta+5\slash 2+k_1+\ldots+k_N-1\slash 2}}.
\end{align*}
Now the last two estimates of the lemma follow as before.
\end{proof}

\begin{proof}[Proof of Theorem \ref{thm:stand}; the case of $R_N^{\ab}$]
We have
\begin{align*}
R_N^{\ab}(\theta,\varphi) & \lesssim \iint d\Pi_{\alpha}(u)d\Pi_{\beta}(v) \int_0^{\infty}
	\sinh\frac{t}2 \bigg| \partial_{\theta}^N {\Big(\cosh\frac{t}2-1+\q\Big)^{-\alpha-\beta-2}}\bigg|
	t^{N-1}\, dt \\
& \equiv \iint d\Pi_{\alpha}(u)d\Pi_{\beta}(v)\, (J_0+J_{\infty}),
\end{align*}
where $J_0$ and $J_{\infty}$ are the integrals in $t$ 
over $(0,1)$ and $(1,\infty)$, respectively. To bound these integrals, we apply Lemma \ref{derb} and get
\begin{align*}
J_0 & \lesssim \int_0^1 \frac{t^N\, dt}{(t^2+\q)^{\alpha+\beta+2+N\slash 2}} \lesssim
	\frac{1}{\q^{\alpha+\beta+3\slash 2}} \int_0^{\infty}\frac{s^N\, ds}{(1+s^2)^{\alpha+\beta+2+N\slash 2}}
	\lesssim \frac{1}{\q^{\alpha+\beta+3\slash 2}},\\
J_{\infty} & \lesssim \int_1^{\infty} \frac{e^{t\slash 2}t^{N-1}\, dt}
	{(e^{t\slash 2})^{\alpha+\beta+5\slash 2}} \lesssim 1 \lesssim
	\frac{1}{\q^{\alpha+\beta+3\slash 2}}.
\end{align*}
Thus
$$
|R_N^{\ab}(\theta,\varphi)| \lesssim \iint \frac{d\Pi_{\alpha}(u)d\Pi_{\beta}(v)}
	{{\q}^{\alpha+\beta+3\slash 2}},
$$
and the asserted growth condition \eqref{gr} (with $\mathbb{B}=\mathbb{C}$) follows from Lemma \ref{bridge}.

To prove the smoothness condition \eqref{sm}, we argue as above, this time using both estimates of 
Lemma \ref{derb} (the first one with $N$ replaced by $N+1$). We find that
$$
|\partial_{\theta}R_N^{\ab}(\theta,\varphi)|+|\partial_{\varphi}R_N^{\ab}(\theta,\varphi)| \lesssim 
	\iint \frac{d\Pi_{\alpha}(u)d\Pi_{\beta}(v)}{\q^{\alpha+\beta+2}},
$$
and this combined with Lemma \ref{bridge} leads to the desired conclusion.
\end{proof}

We finally deal with the $g$-functions based on the Jacobi-Poisson semigroup.
The kernel to be estimated is $\{\partial_t^M \partial_{\theta}^N H_t^{\ab}(\theta,\varphi)\}_{t>0}$
taking values in $\mathbb{B}=L^2(t^{2M+2N-1}dt)$. Here we consider $M,N=0,1,\ldots$ such that $M+N>0$,
so that the cases of the vertical and horizontal $g$-functions are included.
To proceed, we will need a generalization of Lemma \ref{derb}. Denote
$$
\Phi^{\ab}(t,\q) = \frac{\sinh\frac{t}2}{(\cosh\frac{t}2-1+\q)^{\alpha+\beta+2}}.
$$
Notice that in view of Proposition \ref{prop:rep_H}, this expression, integrated against
$d\Pi_{\alpha}(u)\, d\Pi_{\beta}(v)$, gives up to a constant factor the Jacobi-Poisson kernel.

\begin{lem} \label{general}
Let $\alpha,\beta \ge -1\slash 2$ and $M,N\ge 0$ be given. Then
\begin{equation} \label{e1}
\big|\partial_{\theta}^N \partial_t^{M} \Phi^{\ab}(t,\q)\big| \lesssim
\begin{cases}
(\cosh\frac{t}2-1+\q)^{-\alpha-\beta-3\slash 2-\frac{M+N}2}, & t \le 1 \\
(\cosh\frac{t}2-1+\q)^{-\alpha-\beta-3\slash 2}, & t > 1, \;\textrm{if}\;\; N\ge 1 \\
(\cosh\frac{t}2-1+\q)^{-\alpha-\beta-1}, & t > 1, \;\textrm{if}\;\; N=0 \\
(\cosh\frac{t}2-1+\q)^{-1}, & t > 1, \;\textrm{if}\;\; N = 0, \; M \ge 1, \; \alpha+\beta= -1
\end{cases}
\end{equation}
and
\begin{equation*} 
\big|\partial_{\varphi}\partial_{\theta}^N \partial_t^{M} \Phi^{\ab}(t,\q)\big| \lesssim
\begin{cases}
(\cosh\frac{t}2-1+\q)^{-\alpha-\beta- 2-\frac{M+N}2}, & t \le 1 \\
(\cosh\frac{t}2-1+\q)^{-\alpha-\beta-3\slash 2}, & t > 1
\end{cases}.
\end{equation*}
\end{lem}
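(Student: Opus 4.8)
The plan is to follow the scheme that produced Lemma \ref{derb}, the new ingredient being the presence of the extra factor $\sinh\frac t2$ and of the derivatives in $t$. The essential tools will be Fa\`a di Bruno's formula \eqref{Faa} for the high order derivatives, the elementary identities $\partial_\theta^{2m}\q=(-4)^{-m}(\q-1)$ and $\partial_\theta^{2m-1}\q=(-4)^{1-m}\partial_\theta\q$ combined with Lemma \ref{trig} (yielding $|\partial_\theta^m\q|\lesssim1$ for $m$ even and $|\partial_\theta^m\q|\lesssim\sqrt{\q}$ for $m$ odd, symmetrically in $\varphi$, while mixed derivatives $\partial_\varphi\partial_\theta^m\q$ are just bounded), the boundedness of $\q$ from \eqref{stpg10}, and the asymptotics $\sinh\frac t2\simeq t$, $\cosh\frac t2-1\simeq t^2$ for $t\le1$ and $\sinh\frac t2\simeq\cosh\frac t2\simeq e^{t/2}\simeq\cosh\frac t2-1+\q$ for $t>1$. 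It is convenient to separate the numerator, writing $\Phi^{\ab}(t,\q)=\sinh\frac t2\cdot G$ with $G=(\cosh\frac t2-1+\q)^{-\alpha-\beta-2}$, to apply the Leibniz rule in $t$, and then to expand $\partial_\theta^N\partial_t^jG$ and $\partial_\varphi\partial_\theta^N\partial_t^jG$ by Fa\`a di Bruno.

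Rather than keeping track of multi-indices, I would argue by induction on $M+N$, monitoring \emph{admissible terms} of the form $P/(\cosh\frac t2-1+\q)^{\gamma}$, where $P$ is a finite product of $t$-derivatives of $\sinh\frac t2$ and $\cosh\frac t2$ and of (possibly mixed) derivatives of $\q$. For $t\le1$ each such factor is either $\lesssim\sqrt{\cosh\frac t2-1+\q}$ or $\lesssim1$ according to its parity, and one checks that one differentiation in $\theta$, $\varphi$ or $t$ multiplies the resulting bound by at most $(\cosh\frac t2-1+\q)^{-1/2}$: hitting the denominator creates an extra $\partial\q$ or $\partial_t\cosh\frac t2$ factor of size $\lesssim\sqrt{\cosh\frac t2-1+\q}$ while raising $\gamma$ by one (net factor $(\cosh\frac t2-1+\q)^{-1/2}$), and hitting a factor of $P$ either kills it, or flips its parity, costing at worst $(\cosh\frac t2-1+\q)^{-1/2}$. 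Since $|\Phi^{\ab}|\lesssim(\cosh\frac t2-1+\q)^{-\alpha-\beta-3/2}$ and $\partial_\varphi$ counts as one more spatial derivative, this gives the first line of \eqref{e1} and the $t\le1$ case of the $\partial_\varphi$-estimate. For $t>1$ every factor is comparable to $\cosh\frac t2-1+\q$, so each $\partial_t$ is ``neutral'' (the new factor and the bump of $\gamma$ cancel), each spatial derivative on the denominator lowers the exponent by one, and on a factor of $P$ leaves it unchanged. As $|\Phi^{\ab}|\lesssim(\cosh\frac t2-1+\q)^{-\alpha-\beta-1}$ here, this yields $(\cosh\frac t2-1+\q)^{-\alpha-\beta-1}$ for $N=0$ and, since for $N\ge1$ the first $\partial_\theta$ is forced onto the denominator, $(\cosh\frac t2-1+\q)^{-\alpha-\beta-2}\le(\cosh\frac t2-1+\q)^{-\alpha-\beta-3/2}$ for $N\ge1$; the $\partial_\varphi$-estimate for $t>1$ is covered in the same way (any $\partial_\varphi$ hitting the denominator already produces the required extra power). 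This settles all cases of the lemma except the last line of \eqref{e1}.

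The genuinely non-routine point is exactly that last case: when $\alpha+\beta=-1$, $N=0$ and $M\ge1$, the base exponent $-\alpha-\beta-1$ is zero, so the crude count above only yields $|\partial_t^M\Phi^{\ab}|\lesssim1$ for $t>1$, whereas the decay $(\cosh\frac t2-1+\q)^{-1}\simeq e^{-t/2}$ is needed (it is, in turn, what makes the vertical $g$-functions of higher order well defined in the critical case). One must exploit a cancellation. I would substitute $\tau=e^{t/2}$, which turns $\Phi^{\ab}=\sinh\frac t2/(\cosh\frac t2-1+\q)$ into the rational function $(\tau^2-1)/(\tau^2-2(1-\q)\tau+1)$, tending to $1$ as $\tau\to\infty$; hence $\Phi^{\ab}-1=2((1-\q)\tau-1)/(\tau^2-2(1-\q)\tau+1)=\mathcal O(\tau^{-1})$, and since $\partial_t=\tfrac\tau2\partial_\tau$ preserves this decay, $|\partial_t^M\Phi^{\ab}|=|\partial_t^M(\Phi^{\ab}-1)|\lesssim\tau^{-1}\simeq e^{-t/2}\simeq(\cosh\frac t2-1+\q)^{-1}$ for $t>1$. (Equivalently, $\partial_t\Phi^{\ab}=\tfrac12\big(1+(\q-1)\cosh\frac t2\big)/(\cosh\frac t2-1+\q)^2$ can be computed directly and iterated.) Finally I would remark, as usual, that the bounds just obtained also justify differentiating under the integral against $d\Pi_\alpha(u)\,d\Pi_\beta(v)$, and that the comment after Lemma \ref{derb} applies verbatim, so $\alpha+\beta$ may be replaced everywhere by any $\gamma\ge-1$. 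The expected main obstacle is precisely this extraction of cancellation in the critical case $\alpha+\beta=-1$; everything else is a careful but essentially mechanical extension of the proof of Lemma \ref{derb}.
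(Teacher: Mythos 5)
Your proof is correct. For the generic bounds it is essentially the paper's argument in different bookkeeping: the paper writes $\Phi^{\ab}$, up to a constant, as $\partial_t(\cosh\frac t2-1+\q)^{-\alpha-\beta-1}$ (or $\partial_t\ln(\cosh\frac t2-1+\q)$ when $\alpha+\beta=-1$), applies Fa\`a di Bruno to $\partial_t^{M+1}$ to get the family of terms \eqref{form5}, and then invokes Lemma \ref{derb} for the spatial derivatives; you instead keep the factor $\sinh\frac t2$, use Leibniz in $t$, and run an induction on the total number of derivatives with a per-derivative cost of $(\cosh\frac t2-1+\q)^{-1/2}$ for $t\le1$ and a neutral/gaining count for $t>1$. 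The two accountings generate the same terms and the same exponent arithmetic, so this part is a presentational variant. The genuine divergence is in the critical case $\alpha+\beta=-1$, $N=0$, $M\ge1$: the paper computes $2\partial_t\Phi^{-1/2,-1/2}=(1+(\q-1)\cosh\frac t2)/(\cosh\frac t2-1+\q)^{2}$ explicitly and then estimates $\partial_t^{M-1}$ of this product by Leibniz plus Fa\`a di Bruno, while you substitute $\tau=e^{t/2}$, observe that $\Phi^{-1/2,-1/2}-1=2((1-\q)\tau-1)/(\tau^2-2(1-\q)\tau+1)=\mathcal O(\tau^{-1})$ uniformly in $\q$, and note that $\partial_t=\frac\tau2\partial_\tau$ preserves this decay. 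Both routes exploit the same cancellation ($\cosh^2-\sinh^2=1$); yours isolates it more conceptually as ``$\Phi^{-1/2,-1/2}\to1$ with rate $e^{-t/2}$'' and avoids a second round of Fa\`a di Bruno, at the small cost of having to verify that the uniform-in-$\q$ bound $|\partial_\tau^{j}(\Phi^{-1/2,-1/2}-1)|\lesssim\tau^{-1-j}$ for $\tau\ge e^{1/2}$ survives repeated differentiation of the rational function (a routine degree count on numerator and denominator of $P_j/(\tau^2-2(1-\q)\tau+1)^{j+1}$, which you should make explicit).
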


\begin{proof}
We shall use Fa\`a di Bruno's formula \eqref{Faa} and the estimates from Lemma \ref{derb}.
Observe that $\Phi^{\ab}(t,\q)$ can be written, up to a constant factor, as
$$
\partial_t \Big(\cosh \frac{t}2-1+\q\Big)^{-\alpha-\beta-1}
$$
if $\alpha+\beta+1 > 0$, or as $\partial_t \ln (\cosh \frac{t}2-1+\q)$ if $\alpha+\beta+1=0$.
Applying \eqref{Faa} to $\partial_t^{M+1}(g \circ f)$ with $f(t)=\cosh\frac{t}2-1+\q$ and either
$g(x)=x^{-\alpha-\beta-1}$ or $g(x)=\ln x$, we see that $\partial^M_t \Phi^{\ab}(t,\q)$ 
is a linear combination of expressions of the form
\begin{equation} \label{form5}
\Big(\sinh\frac{t}2\Big)^{\sum_{\textrm{odd}\, i \le M+1}k_i}
\Big(\cosh\frac{t}2\Big)^{\sum_{\textrm{even}\, i \le M+1}k_i}
\Big(\cosh\frac{t}2-1+\q\Big)^{-\alpha-\beta-1-(k_1+\ldots + k_{M+1})},
\end{equation}
where $k_1,\ldots,k_{M+1}\ge 0$ satisfy the constraint $k_1+2k_2+\ldots + (M+1)k_{M+1}=M+1$.
From here the third estimate in \eqref{e1} readily follows.

To get the first bound in \eqref{e1}, we combine \eqref{form5} with the first bound in Lemma \ref{derb}
taken with $-\alpha-\beta-2$ replaced by $-\alpha-\beta-1-(k_1+\ldots + k_{M+1})$. 
The conclusion is that when $t\le 1$
\begin{align*}
& \big|\partial^N_{\theta}\partial^M_{t}\Phi^{\ab}(t,\q)\big|\\
& \lesssim \sum \Big(\sinh\frac{t}2\Big)^{\sum_{\textrm{odd}\, i \le M+1}k_i}
\Big(\cosh\frac{t}2\Big)^{\sum_{\textrm{even}\, i \le M+1}k_i}
\Big(\cosh\frac{t}2-1+\q\Big)^{-\alpha-\beta-1-(k_1+\ldots + k_{M+1})-N\slash 2},
\end{align*}
the sum running over $k_1,\ldots,k_{M+1}\ge 0$ such that $k_1+2k_2+\ldots + (M+1)k_{M+1}=M+1$. 
This leads to
$$
\big|\partial^N_{\theta}\partial^M_{t}\Phi^{\ab}(t,\q)\big| \lesssim
\sum \Big(\cosh\frac{t}2-1+\q\Big)^{-\alpha-\beta-1-N\slash 2-(k_1+\ldots + k_{M+1})
	+ \sum_{\textrm{odd}\, i \le M+1}k_i\slash 2},
$$
for $t \le 1$. Taking into account the boundedness of $\q$ and the fact that the constraint on
$k_1,\ldots,k_{M+1}$ forces 
$(k_1+\ldots + k_{M+1})-\sum_{\textrm{odd}\, i \le M+1}k_i\slash 2 \le (M+1)\slash 2$,
we get the first estimate in \eqref{e1}.

Justifying the second bound in \eqref{e1} goes along the same lines.
Combining \eqref{form5} with the second bound in Lemma \ref{derb}, we see that when $t>1$,
\begin{align*}
& \big|\partial^N_{\theta}\partial^M_{t}\Phi^{\ab}(t,\q)\big|\\
& \lesssim \sum \Big(\sinh\frac{t}2\Big)^{\sum_{\textrm{odd}\, i \le M+1}k_i}
\Big(\cosh\frac{t}2\Big)^{\sum_{\textrm{even}\, i \le M+1}k_i}
\Big(\cosh\frac{t}2-1+\q\Big)^{-\alpha-\beta-3\slash 2-(k_1+\ldots + k_{M+1})},
\end{align*}
the sum running over the same $k_1,\ldots,k_{M+1}$ as before. The conclusion follows.

The fourth bound in \eqref{e1} is slightly more subtle. When $\alpha=\beta=-1\slash 2$ there are
important cancellations between terms emerging in $\partial_t \Phi^{-1\slash 2,-1\slash 2}(t,\q)$.
A simple computation gives
$$
2\partial_t \Phi^{-1\slash 2,-1\slash 2}(t,\q) = \frac{1+(\q-1)\cosh\frac{t}2}{(\cosh\frac{t}2-1+\q)^2}.
$$
To analyze the $(M-1)$th derivative in $t$ of this expression, we view it as a product of the functions
$h_1(t)=1+(\q-1)\cosh\frac{t}2$ and $h_2(t)=(\cosh\frac{t}2-1+\q)^{-2}$ and then apply Leibniz'
rule to $h_1 h_2$ and Fa\`a di Bruno's formula to $h_2$. This shows that
$\partial_t^{M}\Phi^{-1\slash 2,-1\slash 2}(t,\q)$ is a linear combination of expressions
$\partial_t^k h_1(t) \partial_t^{M-1-k}h_2(t)$, $0 \le k \le M-1$, where
$$
\partial_t^k h_1(t) \simeq
\begin{cases}
1+(\q-1)\cosh\frac{t}2, & \textrm{if}\; k=0\\
(\q-1)\cosh\frac{t}2, & \textrm{if}\; k>0 \; \textrm{and} \; k \;\textrm{is even}\\
(\q-1)\sinh\frac{t}2, & \textrm{if}\;  k \;\textrm{is odd},
\end{cases}
$$
and $\partial_t^{M-1-k}h_2(t)$ is a linear combination of expressions
$$
\Big(\sinh\frac{t}2\Big)^{\sum_{\textrm{odd}\, i \le M-1-k}\ell_i}
\Big(\cosh\frac{t}2\Big)^{\sum_{\textrm{even}\, i \le M-1-k}\ell_i}
\Big(\cosh\frac{t}2-1+\q\Big)^{-2-(\ell_1+\ldots + \ell_{M-1-k})},
$$
with $\ell_1,\ldots,\ell_{M-1-k}\ge 0$, $\ell_1+2\ell_2+\ldots+(M-1-k)\ell_{M-1-k}=M-1-k$.
Now the desired conclusion follows from the boundedness of $\q$.

The remaining two bounds of the lemma are proved by combining \eqref{form5} with the last two estimates
of Lemma \ref{derb}. All the relevant arguments were already presented above.
Notice that since the derivative $\partial_{\varphi}$ is always present, the singular
cases connected with absence of the horizontal component ($N=0$) do not occur here.
\end{proof}

\begin{proof}[Proof of Theorem \ref{thm:stand}; the cases of $g_V^{\ab}$, $g_H^{\ab}$ and $g_{M,N}^{\ab}$]
By \eqref{PJker} and Minkowski's integral inequality
\begin{align*}
& \big\| \partial_{\theta}^N\partial_{t}^M H_t^{\ab}(\theta,\varphi)\big\|_{L^2(t^{2M+2N-1}dt)}\\
&	\lesssim \iint d\Pi_{\alpha}(u)d\Pi_{\beta}(v) \bigg( \int_0^{\infty}\big( 
	\partial_{\theta}^N\partial_t^M \Phi^{\ab}(t,\q)\big)^2 t^{2M+2N-1}dt\bigg)^{1\slash 2}.
\end{align*}
We split the inner integral in $t$ according to the intervals $(0,1)$ and $(1,\infty)$ and denote
the resulting integrals by $J_0$ and $J_{\infty}$, respectively. Then by Lemma \ref{general}
and the change of variable $t=\sqrt{\q}s$,
$$
J_0 \lesssim \int_0^1 \frac{t^{2M+2N-1}dt}{(t^2+\q)^{2\alpha+2\beta+3+M+N}}
	\le \frac{1}{\q^{2\alpha+2\beta+3}} \int_0^{\infty} 
		\frac{s^{2M+2N-1}ds}{(1+s^2)^{2\alpha+2\beta+3+M+N}} \simeq \frac{1}{\q^{2\alpha+2\beta+3}}
$$
and, taking in addition the boundedness of $\q$ into account,
$$
J_{\infty} \lesssim \int_1^{\infty} \frac{t^{2M+2N-1}dt}{e^{\xi t}} \lesssim 
	\frac{1}{\q^{2\alpha+2\beta+3}}
$$
for some constant $\xi=\xi(\alpha,\beta)$, and $\xi>0$ in all cases. Therefore
$$
\big\| \partial_{\theta}^N\partial_{t}^M H_t^{\ab}(\theta,\varphi)\big\|_{L^2(t^{2M+2N-1}dt)} \lesssim
	\iint \frac{d\Pi_{\alpha}(u)d\Pi_{\beta}(v)}{\q^{\alpha+\beta+3\slash 2}},
$$
and the growth condition \eqref{gr} with $\mathbb{B}=L^2(t^{2M+2N-1}dt)$ follows from Lemma \ref{bridge}.

To prove the smoothness conditions \eqref{sm1} and \eqref{sm2}, we first use Lemma \ref{general} to
bound the relevant derivatives, getting
\begin{align*}
& \big|\partial_{\theta}\partial_{\theta}^N\partial_t^M H_t^{\ab}(\theta,\varphi)\big|
+ \big|\partial_{\varphi}\partial_{\theta}^N\partial_t^M H_t^{\ab}(\theta,\varphi)\big| \\
& \qquad \lesssim 
\begin{cases}
\iint d\Pi_{\alpha}(u)d\Pi_{\beta}(v) \;(\cosh\frac{t}2-1+\q)^{-\alpha-\beta-2-\frac{M+N}2}, & t \le 1\\
\iint d\Pi_{\alpha}(u)d\Pi_{\beta}(v) \; (\cosh\frac{t}2-1+\q)^{-\alpha-\beta-3\slash 2}, & t > 1
\end{cases}.
\end{align*}
Then the Mean Value Theorem, Lemma \ref{lem:comp} and the assumptions $|\theta-\varphi|>2|\theta-\theta'|$,
$|\theta-\varphi|>2|\varphi-\varphi'|$ (considered separately for \eqref{sm1} and \eqref{sm2},
respectively) lead to the estimates
\begin{align*}
& \big| \partial_{\theta}^N\partial_t^M H_t^{\ab}(\theta,\varphi)-
	\partial_{\theta}^N\partial_t^M H_t^{\ab}(\theta',\varphi)\big| \\
& \qquad \lesssim
\begin{cases}
|\theta-\theta'|
\iint d\Pi_{\alpha}(u)d\Pi_{\beta}(v) \;(\cosh\frac{t}2-1+\q)^{-\alpha-\beta-2-\frac{M+N}2}, & t \le 1\\
|\theta-\theta'|
\iint d\Pi_{\alpha}(u)d\Pi_{\beta}(v) \; (\cosh\frac{t}2-1+\q)^{-\alpha-\beta-3\slash 2}, & t > 1
\end{cases},
\end{align*}
and
\begin{align*}
& \big| \partial_{\theta}^N\partial_t^M H_t^{\ab}(\theta,\varphi)-
	\partial_{\theta}^N\partial_t^M H_t^{\ab}(\theta,\varphi')\big| \\
& \qquad \lesssim
\begin{cases}
|\varphi-\varphi'|
\iint d\Pi_{\alpha}(u)d\Pi_{\beta}(v) \;(\cosh\frac{t}2-1+\q)^{-\alpha-\beta-2-\frac{M+N}2}, & t \le 1\\
|\varphi-\varphi'|
\iint d\Pi_{\alpha}(u)d\Pi_{\beta}(v) \; (\cosh\frac{t}2-1+\q)^{-\alpha-\beta-3\slash 2}, & t > 1
\end{cases}.
\end{align*}
Proceeding as in the first part of the proof, we get
\begin{align*}
\big\|\partial_{\theta}^N\partial_t^M H_t^{\ab}(\theta,\varphi)-
	\partial_{\theta}^N\partial_t^M H_t^{\ab}(\theta',\varphi)\big\|_{L^2(t^{2M+2N-1}dt)}
& \lesssim |\theta-\theta'|\iint \frac{d\Pi_{\alpha}(u)d\Pi_{\beta}(v)}{\q^{\alpha+\beta+2}}, \\
\big\|\partial_{\theta}^N\partial_t^M H_t^{\ab}(\theta,\varphi)-
	\partial_{\theta}^N\partial_t^M H_t^{\ab}(\theta,\varphi')\big\|_{L^2(t^{2M+2N-1}dt)}
& \lesssim |\varphi-\varphi'|\iint \frac{d\Pi_{\alpha}(u)d\Pi_{\beta}(v)}{\q^{\alpha+\beta+2}}.
\end{align*}
An application of Lemma \ref{bridge} now finishes the proof.
\end{proof}



\begin{thebibliography}{9}

\bibitem{AAR}
G{.}E{.} Andrews, R{.} Askey, R{.} Roy,
\emph{Special functions},
Encyclopedia of Mathematics and its Applications, vol{.} 71, Cambridge University Press, Cambridge, 1999.

\bibitem{A}
R{.} Askey,
\emph{Orthogonal polynomials and special functions},
Society for Industrial and Applied Mathematics, Philadelphia, Pa{.}, 1975.

\bibitem{BFRT}
J{.}J{.} Betancor, J{.}C{.} Fari\~na, L{.} Rodr\'{\i}guez-Mesa, R{.} Testoni,
\emph{Higher order Riesz transforms in the ultraspherical setting as principal value integral operators},
preprint 2010,  \texttt{http:\slash \slash arxiv.org\slash abs\slash 1005.1492}. 

\bibitem{BFS}
J{.}J{.} Betancor, J{.}C{.} Fari\~na, A{.} Sanabria,
\emph{On Littlewood-Paley functions associated with Bessel operators},
Glasgow Math{.} J{.} 51 (2009), 55--70.

\bibitem{Bur1}
D{.} Buraczewski, T{.} Mart\'{\i}nez, J{.}L{.} Torrea, R{.} Urban,
\emph{On the Riesz transform associated with the ultraspherical polynomials},
J{.} Anal{.} Math{.} 98 (2006), 113--143.

\bibitem{Bur2}
D{.} Buraczewski, T{.} Mart\'{\i}nez, J{.}L{.} Torrea,
\emph{Calder\'on-Zygmund operators associated to ultraspherical expansions},
Canad{.} J{.} Math{.} 59 (2007), 1223--1244.

\bibitem{DK}
A{.} Dijksma, T{.}K{.} Koornwinder,
\emph{Spherical harmonics and the product of two Jacobi polynomials},
Indag{.} Math{.} 33 (1971), 171--196.

\bibitem{Jo}
W{.}P{.} Johnson,
\emph{The curious history of Fa\`a di Bruno's formula},
Amer{.} Math{.} Monthly 109 (2002), 217-–234.

\bibitem{Li}
Z{.} Li,
\emph{Conjugate Jacobi series and conjugate functions},
J{.} Approx{.} Theory 86 (1996), 179--196.

\bibitem{Ma}
T{.} Mart\'{\i}nez,
\emph{Multipliers of Laplace transform type for ultraspherical expansions},
Math{.} Nachr{.} 281 (2008), 978--988.

\bibitem{Mu}
B{.} Muckenhoupt,
\emph{Transplantation theorems and multiplier theorems for Jacobi series},
Mem{.} Amer{.} Math{.} Soc{.} 64 (1986).

\bibitem{MuS}
B{.} Muckenhoupt, E{.}M{.} Stein,
\emph{Classical expansions and their relation to conjugate harmonic functions},
Trans{.} Amer{.} Math{.} Soc{.} 118 (1965), 17--92.

\bibitem{No}
A{.} Nowak,
\emph{Heat-diffusion and Poisson integrals for Laguerre and special Hermite
expansions on weighted $L^p$ spaces},
Studia Math{.} 158 (2003), 239--268.

\bibitem{NoSj}
A{.} Nowak, P{.} Sj\"ogren,
\emph{Riesz transforms for Jacobi expansions},
J{.} Anal{.} Math{.} 104 (2008), 341--369.

\bibitem{NoSt} 
A{.} Nowak, K{.} Stempak, 
\emph{$L^2$-theory of Riesz transforms for orthogonal expansions}, 
J{.} Fourier Anal{.} Appl{.} 12 (2006), 675--711.

\bibitem{NS} 
A{.} Nowak, K{.} Stempak, 
\emph{Riesz transforms for multi-dimensional Laguerre function expansions}, 
Adv{.} Math{.} 215 (2007), 642--678.

\bibitem{Sa}
E{.} Sasso,
\emph{Functional calculus for the Laguerre operator},
Math{.} Z{.} 249 (2005), 683--711.

\bibitem{topics}
E{.}M{.} Stein,
\emph{Topics in Harmonic Analysis Related to the Littlewood-Paley Theory},
Annals of Math{.} Studies, Vol{.} 63, Princeton Univ{.} Press, Princeton, NJ, 1970.

\bibitem{StemT}
K{.} Stempak,
\emph{Conjugate expansions for ultraspherical functions},
T\^ohoku Math{.} J{.} 45 (1993), 461--469.

\bibitem{Stem}
K{.} Stempak,
\emph{Jacobi conjugate expansions},
Studia Sci{.} Math{.} Hung{.} 44 (2007), 117--130.

\bibitem{StTo}
K{.} Stempak, J{.}L{.} Torrea,
\emph{Poisson integrals and Riesz transforms for Hermite function expansions with weights},
J{.} Funct{.} Anal{.} 202 (2003), 443--472.

\bibitem{StTo2}
K{.} Stempak, J{.}L{.} Torrea,
\emph{On $g$-functions for Hermite function expansions},
Acta Math{.} Hungar{.} 109 (2005), 99--125.

\bibitem{StTo3}
K{.} Stempak, J{.}L{.} Torrea,
\emph{Higher Riesz transforms and imaginary powers associated to the harmonic oscillator},
Acta Math{.} Hungar{.} 111 (2006), 43--64.

\bibitem{Szarek}
T{.} Szarek,
\emph{Littlewood-Paley-Stein type square functions based on Laguerre semigroups}, 
Acta Math{.} Hungar{.}, to appear. Online First version DOI: 10.1007/s10474-010-0016-8.

\bibitem{Sz}
G{.} Szeg\"o,
\emph{Orthogonal polynomials},
Fourth Edition, {Amer{.} Math{.} Soc{.} Colloq{.} Publ{.}} {23}, 
Amer{.} Math{.} Soc{.}, Providence, R{.} I{.}, 1975.

\end{thebibliography}
\end{document}